\newtheorem{theorem}{Theorem}[section]
\newtheorem{lemma}[theorem]{Lemma}
\newtheorem{corollary}[theorem]{Corollary}
\theoremstyle{definition}
\newtheorem{proposition}[theorem]{Proposition}
\newtheorem{drhp}[theorem]{$\bar\partial$-RH problem}
\newtheorem{rhp}[theorem]{RH problem}
\newtheorem{dbarproblem}[theorem]{$\bar\partial$-problem}
\newtheorem{remark}[theorem]{Remark}
\numberwithin{equation}{section}
\newcommand{\abs}[1]{\lvert#1\rvert}
\newcommand{\norm}[1]{\parallel#1\parallel}
\newcommand{\ddddd}{\mathrm{d}}
\newcommand{\pp}{\mathcal{P}}
\newcommand{\oo}{\mathcal{O}}
\newcommand{\cc}{\mathcal{C}}
\newcommand{\re}{\mathrm{Re}}
\newcommand{\I}{\mathrm{Im}}
\newcommand{\LL}{{\mathcal L}}
\newcommand{\ii}{\mathrm{i}}
\newcommand{\identity}{\mathrm{Id}}
\newcommand{\ylk}{\left(\begin{matrix}
		1\\0
	\end{matrix}\right)}
\newcommand{\ly}{\left(\begin{matrix}
		0\\1
	\end{matrix}\right)}
\begin{document}

\title[The Focusing Ablowitz-Ladik system]{Soliton resolution  and Painlev\'e Asymptotics for the focusing Ablowitz-Ladik system}

%    Information for first author
\author{Meisen Chen}
%    Address of record for the research reported here
\address{School of Mathematics and Statistics, Fujian Normal University, Fuzhou, PR China}

\email{chenms@fjnu.edu.cn}
%    \thanks will become a 1st page footnote.

%    Information for second author
\author{Engui Fan}
\address{School of Mathematical Sciences,
	Fudan University, Shanghai, PR China}
\email{faneg@fudan.edu.cn}

\author{Zhaoyu Wang}
\address{School of Mathematical Sciences,
	Fudan University, Shanghai, PR China}
\email{wang\_zy@fudan.edu.cn}

%    General info
%\subjclass[2020]{Primary 54C40, 14E20; Secondary 46E25, 20C20}

\date{\today}

%\keywords{Global wellposedness, Fredholm theory, long-time asymptotics}

\begin{abstract}
{We investigate the soliton resolution and Painlev\'e asymptotics for the focusing Ablowitz-Ladik system with  the initial data in  a discrete weighted $\ell^2$ space.
First, we  establish the global well-posedness of this initial-value problem, which is  further reformulated as a  Riemann-Hilbert problem with higher-order poles.
	Using Fredholm theory, the Riemann-Hilbert problem with the jump contour consisting of three circles centered around the origin is uniquely solved.
	Then, by performing a $\bar\partial$-nonlinear steepest descent method to  the Riemann-Hilbert problem,   we  obtain  the    asymptotic approximation  to  the solution
   of  the focusing Ablowitz-Ladik system for large time in different space-time regions of the $(n,t)$-half plane.  In the sectors $\{(n,t): n /(2t) <-M_0 \}$ and $\{(n,t): n /(2t) >M_0 \}$, where $M_0$ is a positive constant, the leading order  asymptotics is dominated by the solitons; while in the  sector  $\{(n,t): |n /(2t) -1 <M_0^{-1} \}$,  the long-time asymptotics is influenced by both the  solitons and  the oscillations;  In the two transition zones  $\{(n,t): |n /(2t)+1|t^{2/3} <C \}$ and   $\{(n,t): |n /(2t)-1|t^{2/3} <C \}$ with $C$ being a positive constant,
  we find  the Painlev\'e-type  asymptotics  which can be  expressed in terms of the solution of the second Painlev\'e transcendents.
}\vspace{2mm}

\noindent {KEYWORDS:}  Focusing Ablowitz-Ladik system, higher-order soliton, Global well-posedness, Fredholm theory, long-time asymptotics,
soliton resolution,  Painlev\'e transcendents\vspace{2mm}

\noindent {MSC 2020:} 35P25, 35Q15, 35B40, 37K40
%keywords or MSC also are shown in the botton of the first page.

\end{abstract}

\maketitle

\tableofcontents

\section{Introduction}\label{sec1}

The Ablowitz-Ladik (AL) system  reads
\begin{align}\label{e1.1}
	\ii\frac{\ddddd}{\ddddd t}q_n=q_{n+1}-2q_n+q_{n-1}+\sigma\lvert q_n\rvert^2(q_{n+1}+q_{n-1}),
\end{align}
where  $\sigma=1$  is called the focusing and $\sigma=-1$  the defocusing AL system.
The AL system (\ref{e1.1}), first introduced by Ablowtiz and Ladik in the 1970s \cite{Ablowitz1975nonlinear,ablowitz1976nonlinear},
is    the completely integrable discretization of the nonlinear Schr\"odinger (NLS) equation
\begin{align}\label{e1.2}
	\ii u_t=u_{xx}+\sigma\abs{u}^2u.
\end{align}
The multi-Hamilton structure of the   AL system   was  constructed by Gekhtman and Nenciu \cite{gekhtman2009multi,mazzuca2023large,li2012periodic}.
By exploiting the connection between AL system  and orthogonal polynomials on the unit circle, Nenciu derived the Lax pair for the AL system   \cite{nenciu2005lax}.
 Miller et al obtained the finite genus solutions for the AL system  with an  algebro-geometric method \cite{miller1995finite}.
 Further Gesztesy et al   provide a detailed derivation of all complex-valued algebro-geometric finite-band solutions of the  AL hierarchy using polynomial recursive method \cite{Gesztesy}.
Xia and Fokas  solved the initial-boundary value problem of the AL system with  Fokas' unified  method \cite{xia2018initial}.
  Killip et al studied the continuum limits of the AL system \cite{killip2023continuum,killip2024modified}.
The study of Gibbs measures in the context of the AL  system has gained attention in recent years  \cite{angelopoulos2020invariant,grava2023generalized}.

The inverse scattering transform is a breakthrough development to  the integrable equations, which is first introduced by Gardner, Greene, Kruskal, and Miura to solve the Korteweg-de Vries equation \cite{gardner1967method}.
Using the inverse scattering transform, Ablowitz and Ladik investigated  the integrability of some discrete  equations, including AL system, Toda lattice, discrete sine-Gordon equation  \cite{Ablowitz1975nonlinear, ablowitz1976nonlinear}.
The inverse scattering transform of the AL systems \eqref{e1.1} in vanishing and non-vanishing background have been well investigated \cite{chen2021robust, Ablowitz2007inverse,ablowitz2004discrete,Ortiz2019inverse,Prinari}.
In general,  the higher-order solitons will  present with the discrete spectral data of multiple poles.
however, the higher-order solitons not always exist for all the integrable systems.
For example,  it has been acknowledged that the focusing NLS  equation has higher-order soliton solutions \cite{shabat1972exact}.
The asymptotic analysis of the $N$-soliton  for  the NLS equation  has been investigated as  $t\to+\infty$ \cite{schiebold2017asymptotics} and $N\to+\infty$ \cite{bilman2019large}.
There are no discrete spectrum for the defocusing NLS  equation with vanishing background,    only simple-pole   discrete spectrum
  for the defocusing NLS  equation with nonvanishing background \cite{cuccagna2016asymptotic}.
It is also known  that there is no soliton solution for the defocusing AL system \cite{ablowitz2004discrete,chen2022l2}.
In the Riemann-Hilbert (RH)  approach, the discrete spectrum is the pole of the RH problem in the RH approach.
By solving the RH problem with the poles, the soliton solutions of the integrable systems are obtained with  the reconstruction formula.
One crucial element required in the inverse scattering theory  is  the solbability of the RH problem.
In this article, we will   rigorously solve  the RH problem for the focusing AL system with multiple poles.

It is well acknowledged that on the complex plane, a matrix-valued RH problem is not always solved \cite{Bolibrukh1990}.
In 1989, based on the Fredholm theory, Zhou developed the vanishing lemma to solve   matrix-valued  RH problems on  the real line $\mathbb{R}$  \cite{zhou1989riemann}.
In Zhou's work, the jump matrix is skew-Hermitian outside $\mathbb{R}$: $v(z)=v(\bar z)^\dagger$,  and  positive definite along $\mathbb{R}$, that is, $\re\ v|_\mathbb{R}>\mathbf{0}$.
The vanishing lemma for the RH problem   ensures  that the Fredholm alternative is zero, thereby guaranteeing solvability of the matrix-valued RH problem.
In certain discrete integrable systems, the discrete and continuous spectrum are symmetric about the unit circle such as  the AL system  \cite{Ablowitz2007inverse,ablowitz2004discrete,Prinari,Ortiz2019inverse}, Toda system \cite{kruger2009long2}, discrete sine-Gordon equation \cite{chen2021riemann}.
However, Zhou's vanishing lemma does not apply to these discrete equations.
To guarantee the solvability of the corresponding RH problem for the focusing AL system, we reformulate it into a pure RH problem, which convert the residue condition for the higher order poles into jumps, from which  we develop the vanishing lemma for the focusing AL system  employing  Fredholm theory.

The long-time asymptotic behavior for the   NLS equation  \eqref{e1.2}  has been studied thoroughly by using Deift-Zhou  steepest descent method and $\bar\partial$-steepest descent method \cite{deift2003long,deift1993long,deift1994long2,deift1994long,dieng2008long,BM1,Miller1,Monvel2}.
The Deift-Zhou  steepest descent method was first developed by  Deift and Zhou
to rigorously obtain the long-time asymptotics behavior of the solution for the mKdV
equation by deforming contours to reduce the original RH problem to a model one whose
solution is calculated in terms of parabolic cylinder functions \cite{deift1993steepest}.
The $\bar\partial$-steepest descent method was first introduced for the asymptotic analysis of the orthogonal polynomials \cite{mclaughlin2006delta,mclaughlin2008steepest}.
Later  this method is applied to  investigate the long-time asymptotics, soliton resolution and  asymptotic stability of $N$-soliton solutions for
   for  integrable systems  \cite{Borghese2018long, jenkins2018soliton, cuccagna2016asymptotic,YF1,YF2,wang2023defocusing}.
  Recently,  we focused on discrete integrable systems  and  investigated  the inverse scattering theory for the defocusing AL system in the discrete weighted $l^2$ space \cite{chen2022l2}, and
   the long-time asymptotic behavior   for the solution  of the initial-value problem  on the background of lower regularity \cite{chen2024long}.
The results show that  the upper half plane $(n,t)$ is divided into three sectors: two of which are fast decaying sectors with the rate $\oo(t^{-1})$ while the other one is the oscillatory sector, where the long-time asymptotic behavior agrees with that for the defocusing  NLS  equation \cite{deift2003long}.

In this article,   we apply the Fredholm theory to  prove  the global well-posedness   of the initial-value problem for the focusing AL system
\begin{align}\label{e1}
	\begin{cases}
		&\ii\frac{\ddddd}{\ddddd t}q_n=q_{n+1}-2q_n+q_{n-1}+\lvert q_n\rvert^2(q_{n+1}+q_{n-1}),\\
		& q_n(t=0)=q_n(0)\in \ell^{2,1},
	\end{cases}
\end{align}
where $n\in\mathbb{Z}$ is the spatial variable and $t\in\mathbb{R}$ is the time variable.
Further  we investigate its long-time asymptotics of the initial-value problem (\ref{e1}) based on its corresponding RH problem.
  We   impose  the RH  problem without   spectral singularity, but  allowing the higher order poles.
In this context,   we reduce the RH problem to a new RH problem   \ref{r2.7}  with a jump contour consisting of three circles.
It is interesting that RH problem \ref{r2.7} is a pure RH problem with no discrete spectrum.
Utilizing the vanishing lemma, we develop the Fredholm theory to prove the solvability of the RH problem \ref{r2.7}.

\subsection{Main results}

It can be shown that the initial-value problem \eqref{e1}  is globally well-posed, and  the reflection coefficient $r(\lambda)$ belongs to $H^1$, which means that it is $\frac{1}{2}$-H\"older continuous on the unit circle.
In the residue condition of RH problem \ref{r1}, the order of the poles is set to be $\alpha_{l}$ for any $l=1,\dots,l_0$, and we remove the poles by some triangular rational functions, and equivalently transform RH problem \ref{r1} into RH problem \ref{r2.7} without poles.
The solvability of RH problem \ref{r1} is shown  by the Fredholm theory and the vanishing lemma.
Further we  study  the long-time asymptotics of $q_n(t)$    in the following different sectors of the $(n,t)$-half plane:
% we construct the long-time asymptotic formulas.
%Defining $\xi=\frac{n}{2t}$ and some $M_0>1$ and some positive constant $C$, according to %the distribution of $\xi$, we divide the upper-half plane $(n,t)$into these regions:
\begin{itemize}
	\item Sector I:  $\{(n,t):\xi>M_0\}$,
%	\item Sector II: $\{(n,t):\abs{\xi+1}t^{2/3}<C\}$,
	\item Sector II: $\{(n,t):\abs{\xi-1}<M_0^{-1}\}$,
%	\item Sector IV:  $\{(n,t):\abs{\xi-1}t^{2/3}<C\}$,
	\item Sector III:  $\{(n,t):\xi<-M_0 \}$,
\end{itemize}
where  $\xi=\frac{n}{2t}$, with  a positive constant $M_0>1$.
See FIGURE \ref{regions}. Besides, there are transition zones between the sectors.  The precise definitions of these transition zones are given as follows:
\begin{itemize}
		\item 1st Transition zone: $\{(n,t):\abs{\xi-1}t^{2/3}<C\}$ (between sector I and II),
		\item 2nd Transition zone:  $\{(n,t):\abs{\xi+1}t^{2/3}<C\}$  (between sector II and III),
\end{itemize}
where $C$ is any positive constant.

\begin{figure}
	\centering
	\includegraphics[width=0.7\textwidth]{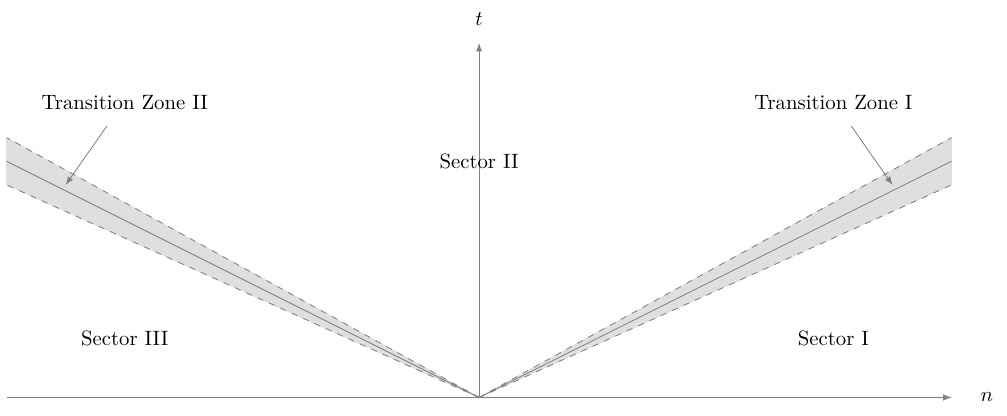}
	\caption{ The different asymptotic sectors of the $(n,t)$-half plane, where $\xi =\frac{n}{2t}$.}\label{regions}
\end{figure}
In particular, in sector II, the leading terms consist of solitons and the oscillatory part, while the remaining is dominated by $\oo(t^{-\frac{3}{4}})$.
In sector I and III, the leading terms consist of only solitons and the remaining is dominated by $\oo(t^{-1})$.
In 1st and 2nd transition zones,  the leading terms are expressed in terms of dedicated solutions of the  Painlev\'e II equation.
 The main results are stated as follows. %关于这句话painleve的表述我再看看
\begin{theorem}
The initial-value problem \eqref{e1} is uniquely solved, and the reflection coefficient $r(\lambda)$ belongs to $H^1$ when assuming the scattering coefficient $a(\lambda)$ non-vanishing along the unit circle $\{\lambda \in \mathbb{C}: \abs{\lambda}=1\}$.
	Let  $q_n(t)$ be   the solution of \eqref{e1},
	 then $q_n(t)$ admits the following asymptotics  in the $(n,t)$-half plane  as $t\to+\infty$.
	\begin{itemize}
		\item In sector II, %we have that as $t\to+\infty$,
		\begin{equation}\label{e5.3}
			\begin{split}
				q_{n}(t)&=T_0\left(q^{\mathcal{Z}_\xi}_{n}(t)+t^{-\frac{1}{2}}q^{Osc}(\xi)\right)+\oo(t^{-\frac{3}{4}}),
			\end{split}
		\end{equation}
		where  $T_0 = \prod_{\lambda_l\in\mathcal{Z}_\xi^-}\abs{\lambda_{l}}^{2\alpha_{l}}e^{-\int_{\arg S_2}^{\arg S_1}\ln(1+\abs{r(e^{\ii\theta})}^2)\frac{\ddddd \theta}{2\pi}}$, 
		and $q^{\mathcal{Z}_\xi}_n(t)$ is the solitonic part
		\begin{align}\label{qzxi}
			q^{\mathcal{Z}_{\xi}}_n(t)=M^{(\mathcal{Z}_\xi)}_{1,2}(0,n+1,t),
		\end{align}
		while $q^{Osc}(\xi)$ is the oscillatory part
		\begin{equation}\label{e5.5}
			\begin{split}
				&q^{Osc}(\xi)=(-T_1^2M^{(\mathcal{Z}_\xi)}_{1,2}(S_1)\overline{M^{(\mathcal{Z}_\xi)}_{2,1}(S_1)}\gamma_2(r(S_1))+T_1^{-2}\abs{M^{(\mathcal{Z}_\xi)}_{1,1}(S_1)}^2\gamma_1(r(S_1))\\
				&\quad-T_2^2M^{(\mathcal{Z}_\xi)}_{1,2}(S_2)\overline{M^{(\mathcal{Z}_\xi)}_{2,1}(S_2)}\gamma_1(\overline{r(S_2)})
				+T_2^{-2}\abs{M^{(\mathcal{Z}_\xi)}_{1,1}(S_2)}^2\gamma_2(\overline{r(S_2)}))\frac{1}{\sqrt{2}(1-\xi^2)^{\frac{1}{4}}}.
			\end{split}
		\end{equation}	
	Here, $M^{(\mathcal{Z}_\xi)}(\lambda)$ is  defined as \eqref{e5.43} and  $M^{(\mathcal{Z}_\xi)}_{i,j}(\lambda), \; i,j=1,2,$ denotes the $(i,j)$ elements of $M^{(\mathcal{Z}_\xi)}(\lambda)$.
		$S_1$ and $S_2$ are sationary phase points given by
		\begin{align*}
			&S_1=-\ii\xi-\sqrt{1-\xi^2},\quad S_2=-\ii\xi+\sqrt{1-\xi^2},\\
			&T_j=\prod_{\lambda_l\in\mathcal{Z}_\xi^-}\left(\frac{S_j-\lambda_l}{S_j-\bar\lambda_l^{-1}}\right)^{\alpha_l}\frac{(-2\sqrt{2}(1-\xi^2)^\frac{3}{4}t^{\frac{1}{2}})^{(-1)^{j-1}\ii\nu_j}(\ii S_j)^{(-1)^{j}\ii\nu_j}}{e^{\ii t[(-1)^{j}\sqrt{1-\xi^2}-\xi\arg S_j-1]-\tilde\alpha_j(S_j)}}, 	\	j=1,2,\\
			&\nu_j=-\frac{\ln(1+\abs{r(S_j)}^2)}{2\pi},\quad \tilde\alpha_j(\lambda)=\int_{S_2}^{S_1}\frac{\ln(1+\abs{r(\varsigma)}^2)-\ln(1+\abs{r(S_j)}^2)}{\varsigma-\lambda}\frac{\ddddd \varsigma}{2\pi\ii}.\\
			&\gamma_1=\gamma_1(\tau)=\frac{\sqrt{2\pi}e^{\ii\pi/4-\pi\nu(\tau)/2}}{\tau\Gamma(-\ii\nu(\tau))},\quad \gamma_2=\gamma_2(\tau)=\frac{\sqrt{2\pi}e^{-\ii\pi/4-\pi\nu(\tau)/2}}{\bar\tau\Gamma(\ii\nu(\tau))}.
		\end{align*}
		
		\item In sectors I and III, % we have that as $t\to+\infty$,
		\begin{align}\label{e1.15}
			q_n(t)=T_0q_n^{\mathcal{Z}_\xi}(t)+\oo(t^{-1}),
		\end{align}
		where
		$q^{\mathcal{Z}_\xi}_n(t)$ is the solitonic part given by \eqref{qzxi} and
		\begin{align*}
			T_0=\begin{cases}
				\prod_{\lambda_l\in\mathcal{Z}_\xi^-}\abs{\lambda_{l}}^{2\alpha_{l}}&\text{sector I},\\
				e^{-\int_{0}^{2\pi}\ln(1+\abs{r(e^{\ii\theta})}^2)\frac{\ddddd\theta}{2\pi}}\prod_{\lambda_l\in\mathcal{Z}_\xi^-}\abs{\lambda_{l}}^{2\alpha_{l}}&\text{sector III}.
			\end{cases}
		\end{align*}
		\item In 1st and 2nd transition zones, %we have that as $t\to+\infty$,
		\begin{equation}\label{qn}
			\begin{split}
				q_{n}(t)&= T_0 \left(q^{\mathcal{Z}_\xi}_{n}(t)+t^{-\frac{1}{3}}q^{P}(\xi)\right)+\oo(t^{-\frac{1}{2}}),
			\end{split}
		\end{equation}
		where	$$T_0 = \begin{cases}
			\prod_{\lambda_l\in\mathcal{Z}_\xi^-}\abs{\lambda_{l}}^{2\alpha_{l}}&\text{1st transition zone}, \\
			e^{-\int_{0}^{2\pi}\frac{\ln(1+\abs{r(e^{\ii\theta})}^2)\ddddd \theta}{2\pi}}	\prod_{\lambda_l\in\mathcal{Z}_\xi^-}\abs{\lambda_{l}}^{2\alpha_{l}}&\text{2nd transition zone},
		\end{cases}$$
	 $q^{\mathcal{Z}_\xi}_n(t)$ is the solitonic part  defined as \eqref{qzxi},
		and $q^{P}(\xi)$ is the interaction between the solitonic part and the solution of the Painlev\'e II equation, given by
		\begin{align}
			&q^{P}(\xi)=\begin{cases}
				  q^{\mathcal{Z},P}(-\ii)&\text{1st transition zone},\\
				q^{\mathcal{Z},P}(\ii) &\text{2nd transition zone},
			\end{cases} \nonumber
		\end{align}
		with
		\begin{align}
		 q^{\mathcal{Z},P}(\lambda)=& \ii  \int_{s}^{\infty} v^2(\varsigma) \ddddd \varsigma \left( M^{(\mathcal{Z}_\xi)}_{1,1}( \lambda) \overline{M^{(\mathcal{Z}_\xi)}_{2,1}( \lambda)}- M^{(\mathcal{Z}_\xi)}_{1,2}( \lambda)\overline{ M^{(\mathcal{Z}_\xi)}_{2,2}( \lambda)} \right) \nonumber\\
		& -v(s)e^{\ii \varphi}  M^{(\mathcal{Z}_\xi)}_{1,2}( \lambda)\overline{M^{(\mathcal{Z}_\xi)}_{2,1}( \lambda)}
		  -v(s)e^{-\ii \varphi}  M^{(\mathcal{Z}_\xi)}_{1,1}( \lambda)\overline{M^{(\mathcal{Z}_\xi)}_{2,2}( \lambda)},\nonumber\\
		\varphi=& \begin{cases}
\ii	\phi(-\ii ) +\arg  r(-\ii) + 2  \alpha_{l} \sum_{\lambda_{l}\in\mathcal{Z}_\xi^-} \arg(\ii+\lambda_{l})	&\text{1st transition zone},\\
			-\ii	\phi(\ii ) -\arg  r(\ii) - 2 \alpha_{l} \sum_{\lambda_{l}\in\mathcal{Z}_\xi^-} \arg(\ii-\lambda_{l}) +\oint_{\Sigma} \frac{\ln(1+\abs{r(\varsigma)}^2)}{\pi\varsigma} \ddddd \varsigma&\text{2nd transition zone}, 
		\end{cases} \nonumber
		\end{align}
		and $\phi(\lambda)$ being  the phase function in \eqref{e2.18}.
		Here, $M^{(\mathcal{Z}_\xi)}(\lambda)$ is defined as \eqref{e5.43}. % and  $M^{(\mathcal{Z}_\xi)}_{i,j}, \; i,j=1,2,$ denotes the $(i,j)$ elements of $M^{(\mathcal{Z}_\xi)}$
		In addition,
		$v(s)$ is the unique solution of the Painlev\'e II equation \eqref{p23} characterized by that as $s \to + \infty$,
		\begin{align}
			v(s) \sim \begin{cases}
				|r(-\ii)| \text{Ai}(s) &\text{1st transition zone},\\
				 |r(\ii)|\text{Ai}(s)&\text{2nd transition zone},
			\end{cases}
		\end{align}
		where $\text{Ai}(s)$ is the classical Airy function.

	\end{itemize}
	
\end{theorem}

%The Painlev\'e II transcendents in \eqref{qpnt} are real-valued, nonsingular on the real line and behave like $c \text{Ai}(s), -1\le c\le 1$, for large positive $s$. The one-parameter family of solutions are known as the Hastings-Mc

The Painlev\'e  function $v(s)$ belong to the one-parameter family of the second Painlev\'e transcendents first singled out by Segur and Ablowitz \cite{AS}. It is worthwhile to see that the Painlevé transcendents and their higher order
analogues play important roles in asymptotic studies of intergrable systems.
Among others,  we mention their appearances in the long-time asymptotics of Korteweg-de Vries, modified Korteweg-de Vries equations and its hierarchy \cite{Charlier2020,deift1993steepest, hz}, in the defocusing NLS equation under a nonzero background \cite{wang2023defocusing}, in the Camassa-Holm and modified Camassa-Holm equation \cite{mis,xu2024transient}, and so on.
Moreover,  it also appear in the case of the small dispersion limit of nonlinear integrable PDEs \cite{bertola2010universality, bertola2013universality, claeys2009universality, claeys2010painleve, lu2022universality}.

\subsection{Plan of the proof}

This article is organized as follows.
In Section \ref{s2}, we present the global well-posedness of the focusing AL system in the discrete weight $\ell^2$ space.
In Section \ref{s3}, we introduce the direct scattering and construct RH problem \ref{r1} for the initial-value problem.
In Section \ref{s4}, we investigate the solvability of RH problem \ref{r1}, by transforming it into a pure RH problem \ref{r2.7} with jump contour consisting of three circles,
which is uniquely solved by applying Fredholm theory.
In Section \ref{s5}, we investigate the long-time asymptotics in sector II.
In Section \ref{s6}, we consider the  long-time asymptotics in sectors I and III.
In Section \ref{s7}, we concentrate on the long-time asymptotics in 1st and 2nd transition zones.

List some notation exploited in this paper below:
\begin{itemize}
	\item The right-shifting operator $E$: $Ef(n)=f(n+1)$, the left-shifting operator $E^{-1}$: $E^{-1}f(n)=f(n-1)$.
	\item $\ell^1$: $\left\{\{a_n\}_{n=-\infty}^\infty:\sum_{-\infty}^\infty\lvert a_n\rvert<\infty\right\}$, where the norm reads $\|a_n\|_1=\sum_{-\infty}^\infty\lvert a_n\rvert$.
	\item $\ell^{k,1}$: $\left\{\{a_n\}_{n=-\infty}^\infty:\sum_{-\infty}^\infty(1+n^2)\lvert a_n\rvert^k<\infty\right\}$, $k=1,2$, where $\|a_n\|_{k,1}=\left(\sum_{-\infty}^\infty(1+n^2)\lvert a_n\rvert^k\right)^\frac{1}{k}$.
	\item $H^1$: $\left\{f(z)\in L^2(\Sigma):\partial_\theta f(e^{\ii\theta})\in L^2([0,2\pi])\right\}$, where $\Sigma=\{e^{\ii\theta}: \theta\in[0.2\pi]\}$ is the unit circle at the origin and the norm is $\norm{f}_{H^1}=\norm{f}_{L^2(\Sigma)}+\norm{\partial_\theta f}_{L^2(\Sigma)}$.
	\item The Pauli matrices:
	\begin{align}
		\sigma_1=\left(\begin{matrix}
			0&1\\1&0
		\end{matrix}\right),\quad\sigma_2\left(\begin{matrix}
			0&-\ii\\\ii&0
		\end{matrix}\right),\quad\sigma_3=\left(\begin{matrix}
			1&0\\0&-1
		\end{matrix}\right).
	\end{align}
	\item The phase function on $\lambda\in\mathbb{C}$:
	\begin{align}\label{e2.18}
		\phi(\lambda)=\phi(\lambda,n,t)=-\ii t(\lambda+\lambda^{-1}-2)+n\ln\lambda.
	\end{align}
	\item The discrete spectrum $\mathcal{Z}\cup\overline{\mathcal{Z}^{-1}}$:
	\begin{align}
		\mathcal{Z}=\{\lambda_1,\dots,\lambda_{l_0}\},
	\end{align}
	and
	\begin{align}
		\mathcal{Z}_\xi^-=\mathcal{Z}\cap\{\re\phi>0\},\quad\mathcal{Z}_\xi=\mathcal{Z}\cap\{\re\phi=0\}.
	\end{align}
	\item $V^\dagger$: Hermitian transpose of matrix $V$.
	
\end{itemize}

Our analysis exploits the Lax representation:
\begin{align}\label{e2}
	EX&=AX,\quad \frac{\ddddd X}{\ddddd t}=BX,\\
	A&=z^{\sigma_3}+Q,\quad B=\frac{\sigma_3}{\ii}\left(\frac{(z-z^{-1})^2}{2}-Q(E^{-1}Q)+z^{\sigma_3}Q-
	(E^{-1}Q)z^{\sigma_3}\right),
\end{align}
where $Q$ is a skew-symmetry matrix
\begin{align}
	Q=Q(n)=\left(\begin{matrix}
		0&q_n\\-\overline{q_n}&0
	\end{matrix}\right).
\end{align}
That a smooth function solves the initial-value problem (\ref{e1}) is equivalent to the compatibility condition:
\begin{align*}
	\frac{\ddddd A}{\ddddd t}=(EB)A-AB,
\end{align*}
which is proved to be the focusing AL system.
Introducing
\begin{align}\label{e1.14}
	c_n(t)=\prod_{k=n}^{\infty}(1+\abs{q_k(t)}^2),
\end{align}
it is readily seen that $c_{-\infty}=\lim_{n\to-\infty}c_n(t)$ is a conserved quantity.
By direct computations,
\begin{align}\label{e2.6}
	\frac{\ddddd}{\ddddd t} c_{-\infty}&=\frac{\ddddd}{\ddddd t}\prod_{n\in\mathbb{Z}}(1+\abs{q_n(t)}^2)=c_{-\infty}\sum_{n\in\mathbb{Z}}\frac{\frac{\ddddd}{\ddddd t}q_n(t)\overline{q_n(t)}+q_n(t)\overline{\frac{\ddddd}{\ddddd t}q_n(t)}}{1+\abs{q_n(t)}^2}
\end{align}
and substituting \eqref{e1} into \eqref{e2.6}, we obtain that
\begin{align*}
	\frac{\ddddd}{\ddddd t} c_{-\infty}\equiv 0,
\end{align*}
that is, $c_{-\infty}$ is independent on $t$.
Set
\begin{align}\label{e1.16}
	c_{-\infty}=c_{-\infty}(t)=\prod_{n\in\mathbb{Z}}(1+\abs{q_n(0)}^2).
\end{align}
Furthermorr, it is easily seen that for any $n\in\mathbb{Z}$ and $q_n(0)\in\mathbb{R}$,
\begin{align}
	0<\ln(1+\abs{q_n(0)}^2)<\abs{q_n(0)}^2,
\end{align}
and thus under the assumption of $q(0)$ in \eqref{e1}, it follows that $c_{-\infty}$ is dominated by $e^{\norm{q(0)}_{2}}$ and therefore a positive number.

\section{Global well-posedness}\label{s2}
In this section, we consider the solution of the initial-value problem \eqref{e1}.
We assert that when the initial potential $q(0)$ belongs to a weighted $\ell^2$ space, the conserved quantity $c_{-\infty}$ is a positive constant, and the solution is globally well-posed as shown in the proposition below.
\begin{proposition}\label{p2.1}
	When $q(0)\in \ell^{2,1}$, the initial-value problem \eqref{e1} admits a unique solution $q(t)\in\ell^{2,1}$ for $t\in\mathbb{R}$, and the solution is Lipschitz continuous about $t$.
\end{proposition}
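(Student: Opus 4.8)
The plan is to treat \eqref{e1} as the autonomous ODE $\dot q = -\ii F(q)$ on the Banach space $\ell^{2,1}$, where $F(q)_n = q_{n+1}-2q_n+q_{n-1}+\abs{q_n}^2(q_{n+1}+q_{n-1})$, and to upgrade a local solution to a global one by an a priori bound fed by the conserved quantity $c_{-\infty}$. First I would recast the problem as the integral equation $q_n(t)=q_n(0)-\ii\int_0^t F(q(s))_n\,\ddddd s$ and solve it on a short interval by the contraction mapping principle. Two elementary facts make this work: the shift operators $E^{\pm1}$ are bounded on $\ell^{2,1}$, since $1+(n\mp1)^2\le C(1+n^2)$ uniformly in $n$; and $\ell^{2,1}\hookrightarrow\ell^2\hookrightarrow\ell^\infty$, so the cubic term obeys $\norm{\abs{q_n}^2(q_{n+1}+q_{n-1})}_{2,1}\le C\norm{q}_{\ell^\infty}^2\norm{q}_{2,1}$. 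Hence $F\colon\ell^{2,1}\to\ell^{2,1}$ is locally Lipschitz, and Picard--Lindel\"{o}f yields a unique solution on some $[-T,T]$ with $T=T(\norm{q(0)}_{2,1})$.

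The crux is promoting this to $t\in\mathbb{R}$ through an a priori estimate, and the decisive input is the conservation of $c_{-\infty}=\prod_n(1+\abs{q_n(t)}^2)$ established above. Since every factor in this product is at least $1$, finiteness of $c_{-\infty}$ forces the \emph{uniform-in-time} bound $\sup_n\abs{q_n(t)}^2\le c_{-\infty}-1$; this $\ell^\infty$ control is precisely what tames the cubic nonlinearity. Differentiating and using $\abs{\dot q_n}\le(1+\norm{q}_{\ell^\infty}^2)(\abs{q_{n+1}}+\abs{q_{n-1}})+2\abs{q_n}$ together with AM--GM and the shift--weight comparison $1+(n\mp1)^2\le C(1+n^2)$, I expect to arrive at $\frac{\ddddd}{\ddddd t}\norm{q(t)}_{2,1}^2\le C(1+c_{-\infty})\norm{q(t)}_{2,1}^2$. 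Gr\"{o}nwall's inequality then gives $\norm{q(t)}_{2,1}^2\le\norm{q(0)}_{2,1}^2\,e^{C(1+c_{-\infty})\abs{t}}$, so the $\ell^{2,1}$ norm stays finite on every bounded interval, no finite-time blow-up can occur, and the local solution continues to all of $\mathbb{R}$. Lipschitz continuity in $t$ follows directly from the integral equation: $\norm{q(t)-q(s)}_{2,1}\le\int_s^t\norm{F(q(\tau))}_{2,1}\,\ddddd\tau\le\abs{t-s}\sup_\tau\norm{F(q(\tau))}_{2,1}$, and the supremum is finite on compact intervals because both $\norm{q}_{2,1}$ and $\norm{q}_{\ell^\infty}$ are bounded there.

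The hard part will be the global a priori estimate, not the local theory. The weighted norm $\norm{q}_{2,1}$ is itself \emph{not} conserved (only $c_{-\infty}$, and hence $\norm{q}_{\ell^\infty}$ and $\norm{q}_{\ell^2}$, are), so one must verify that the weight $n^2$ does not interact destructively with the shifts appearing in the nonlinearity. The observation that settles this is that the conserved $\ell^\infty$ bound effectively reduces the cubic vector field to one with at-most-linear growth in $\norm{q}_{2,1}$, converting the a priori differential inequality into a linear one to which Gr\"{o}nwall applies; the skew-symmetric structure of $Q$ additionally cancels the diagonal $-2q_n$ contribution, though the cruder bound above already suffices.
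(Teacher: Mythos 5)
Your proof is correct, and it rests on the same two decisive ingredients as the paper's: conservation of $c_{-\infty}$, which forces a uniform-in-time $\ell^\infty$ bound on the solution, and a Gr\"{o}nwall estimate showing the weighted norm $\norm{q(t)}_{2,1}$ grows at most exponentially. The architecture, however, is genuinely different. The paper sets up the local theory in $\ell^\infty$: it solves the ODE in the ball $B(q(0),c_{-\infty})\subset\ell^\infty$, where the right-hand side is Lipschitz and the local existence time $t_1$ depends only on $c_{-\infty}$; since $c_{-\infty}$ is conserved, the solution is extended by steps of uniform length $t_1/2$, giving global existence in $\ell^\infty$ with no reference to the weight. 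Only afterwards is the Gr\"{o}nwall inequality invoked, to show that this already-global solution never blows up in $\ell^{2,1}$ and is Lipschitz in $t$. You instead run the contraction mapping directly in $\ell^{2,1}$, where the local existence time depends on $\norm{q(0)}_{2,1}$ --- a quantity that is \emph{not} conserved --- so your continuation argument cannot proceed by uniform stepping and must lean on the Gr\"{o}nwall bound itself to rule out finite-time blow-up; you are explicit about this, and the estimate you sketch (shift-weight comparison $1+(n\mp1)^2\le C(1+n^2)$, the $\ell^\infty$ bound $\sup_n\abs{q_n(t)}^2\le c_{-\infty}-1$ taming the cubic term, then Gr\"{o}nwall) does close the loop. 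Each route buys something: the paper's gives uniqueness in the larger class of merely bounded solutions and cleanly decouples global existence from the weighted estimate, while yours is more self-contained --- all analysis happens in the space where the solution is claimed to live --- at the cost of uniqueness holding a priori only among $\ell^{2,1}$-valued solutions.
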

\begin{proof}
	Recalling that $c_{-\infty}$ is constant and utilizing this fact, we solve the initial-value problem by Gronwell's equality.

	Since $c_{-\infty}$ is finite, we start by discussing the solution of \eqref{e1} in the ball:
	\begin{align*}
		B(q(0),c_{-\infty}):=\{a\in \ell^\infty:\norm{a-q(0)}_\infty<c_{-\infty}\}.
	\end{align*}
	By the definition of $c_{-\infty}$, it is easy to check that the initial potential satisfies $\norm{q(0)}_\infty=\sup_{n\in\mathbb{Z}}\abs{q_n(0)}<c_{-\infty}$.
	Since the right-hand side of \eqref{e1} is Lipschitz continuous, by a standard argument about ODEs in a Banach space, there exists $t_1>0$ such that for $t\in(-t_1,t_1)$, \eqref{e1} admits a unique solution in $B(q(0),c_{-\infty})$, and $t_1$ is determined by the conserved quantity $c_{-\infty}$.
	Moreover, noticing that $\norm{q(-\frac{t_1}{2})}_{\infty}$ and $\norm{q(\frac{t_1}{2})}_{\infty}$ are also less then $c_{-\infty}$, we extend the solution to $t\in(-\frac{t_1}{2},\frac{3t_1}{2})\cup(-\frac{3t_1}{2},\frac{t_1}{2})=(-\frac{3t_1}{2},\frac{3t_1}{2})$ by investigating the solution of \eqref{e1} in the ball $B(q(\frac{t_1}{2}),c_{-\infty})$ and $B(q(-\frac{t_1}{2}),c_{-\infty})$, respectively;
	repeating the procedure, we uniquely solve the solution $q(t)$ in $B(0,c_{-\infty})$ for $t\in\mathbb{R}$.
	
	Based on the unique solvability of \eqref{e1} in the ball $B(0,c_{-\infty})$, we come to solving \eqref{e1} in $\ell^{2,1}$, that is, $q(t)$ does not blow up in $\ell^{2,1}$ for $t\in\mathbb{R}$.
	By \eqref{e1},
	\begin{align*}
		\abs{\frac{\ddddd}{\ddddd t}\abs{q_n(t)}^2}\lesssim\abs{q_n(t)}^2+\abs{q_{n-1}(t)}^2+\abs{q_{n+1}(t)}^2.
	\end{align*}
	Summing up the both sides in the above inequality, we obtain that there exists a positive constant $C$ such that
	\begin{align}\label{e2.7}
		\norm{\frac{\ddddd}{\ddddd t}\abs{q(t)}^2}_{1,1}\le C\norm{q(t)}_{2,1}^2.
	\end{align}
	Integrate both sides of \eqref{e2.7} on the interval: $[0,t]$ for $t>0$ (or $[t,0]$ for $t<0$), it is readily seen that
	\begin{align}
		\norm{q(t)}_{2,1}-\norm{q(0)}_{2,1}\le C\int_{0}^t\norm{q(\tau)}_{2,1}\ddddd\tau,
	\end{align}
	therefore by Gronwell's inequality, $\norm{q(t)}_{2,1}\le\norm{q(0)}_{2,1}e^{Ct}$ that never blows up.
	Additionally, in the similar way, for any $\epsilon>0$, we obtain that
	\begin{align}
		\norm{q(t\pm\epsilon)-q(t)}_{2,1}\lesssim\pm\int_{0}^{\pm\epsilon}\norm{q(t+\tau)}_{2,1}\ddddd\tau,
	\end{align}
	that is, the solution $q(t)$ is Lipschitz continuous in $\ell^{2,1}$ about $t$.
	This proves the results.
\end{proof}

\section{Riemann-Hilbert problem}\label{s3}

In \cite{Ablowitz1975nonlinear,ablowitz1976nonlinear,ablowitz2004discrete}, the authors have presented the inverse scattering transform for the focusing AL system for the fast decaying initial potential.
In this section, we further study the scenario when the initial potential belongs to $\ell^{2,1}$, which refers to some results in \cite{Ablowitz1975nonlinear,ablowitz1976nonlinear,ablowitz2004discrete}, including the results for the Jost solution and the reflection coefficient, and based on these, we construct RH problem \ref{r1}.
Here, the jump contour is the unit circle $\Sigma=\{ \lambda:\abs{\lambda}=1\}$ oriented clockwise.
In the RH problem, we give the residue condition at the $\alpha_l$-order pole $\lambda_l$, $\alpha\in\mathbb{N}^+$.
Since the initial potential is in $l^{2,1}$, we further prove that the reflection coefficient $r(\lambda)$ belongs to a Hilbert space $H^1$.
Finally, by the RH problem, we obtain the reconstruction formula for the initial-value problem \eqref{e1}:
\begin{align}\label{e2.46}
	q_n(t)=[M(0,n+1,t)]_{1,2}.
\end{align}

\subsection{Jost solutions and modified Jost solutions}

Denote  $X^\pm(z,n,t)$ as the Jost solutions for the Lax pair \eqref{e2}, which satisfy that as $n\to\pm\infty$,
\begin{align*}
	X^\pm(z,n,t)\sim z^{n\sigma_3}e^{-\frac{\ii}{2}(z-z^{-1})^2t\sigma_3},
\end{align*}
and $S(z)$ as the scattering matrix such that
\begin{align}\label{e4}
	X^-(z,n,t)=X^+(z,n,t)S(z).
\end{align}
Since the matrix $Q(n)$ is skew-Hermitian, it follows that the Jost solutions admit the symmetric property:
\begin{align}\label{e2.10}
	X^\pm(z,n,t)=\overline{\sigma_2X^\pm(\bar z^{-1},n,t)\sigma_2}.
\end{align}
Denote $\lambda=z^2$, and naturally introduce the modified Jost solutions
\begin{align}\label{e5}
	Y^\pm(\lambda)=Y^\pm(\lambda,n,t)=\left(\begin{matrix}
		1&0\\0&z
	\end{matrix}\right)X^\pm(z,n,t)z^{-n\sigma_3}e^{\frac{\ii t}{2}(z-z^{-1})^2\sigma_3}\left(\begin{matrix}
		1&0\\0&z^{-1}
	\end{matrix}\right),
\end{align}
which are single-valued functions by substituting \eqref{e5} into \eqref{e2} and admit the symmetric property
\begin{align}\label{e3.6}
	Y^\pm(\lambda,n,t)=\overline{\sigma_2Y^\pm(\bar\lambda^{-1},n,t)\sigma_2}
\end{align}
in view of \eqref{e2.10}.
Recalling the assumption that $q(0)\in\ell^{2,1}\in\ell^1$, as shown in \cite{Ablowitz1975nonlinear,ablowitz1976nonlinear,ablowitz2004discrete}, we obtain that the matrix $(Y^+_1,Y^-_2)(\lambda)$ is holomorphic on $D_-=\{|\lambda|<1\}$ and continuously extended to $D_-\cup\Sigma$, $\Sigma=\{|\lambda|=1\}$, while $(Y^-_1,Y^+_2)(\lambda)$ is holomorphic on $D_+=\{|\lambda|>1\}$ and continuously extended to $D_+\cup\Sigma$.
Moreover, the modified Jost solutions satisfy asymptotic properties:
\begin{subequations}\label{e2.14}
	\begin{align}
		&(Y^+_1,Y^-_2)(\lambda,n,t)\sim\left(\begin{matrix}
			c_n^{-1}(t)+O(\lambda)&q_{n-1}(t)+O(\lambda)\\c_n^{-1}(t)\overline{q_n(t)}\lambda+O(\lambda^2)&1+O(\lambda)
		\end{matrix}\right),\quad\lambda\to0,\label{e2.14a}\\
		&(Y^-_1,Y^+_2)(\lambda,n,t)\sim\left(\begin{matrix}
			1+O(\frac{1}{\lambda})&-c_n^{-1}(t)q_n(t)\frac{1}{\lambda}+O(\frac{1}{\lambda^2})\\-\overline{q_{n-1}(t)}+O(\frac{1}{\lambda})&c_n^{-1}(t)+O(\frac{1}{\lambda})
		\end{matrix}\right),\quad\lambda\to\infty.
	\end{align}
\end{subequations}

\subsection{Scattering coefficients}

Introduce scattering coefficients,
\begin{align}\label{e2.15}
	a(\lambda)=c_n(t)\det(Y^-_1,Y^+_2)(\lambda,n,t),\quad b(\lambda)=e^{\phi(\lambda,n,t)}c_n(t)\det(Y^+_1,Y^-_1)(\lambda,n,t).
\end{align}
and the reflection coefficient
\begin{align}\label{e2.20}
r(\lambda)=\frac{b(\lambda)}{a(\lambda)},
\end{align}
and this part aims to proving that $r(\lambda)\in H^1$.

Factually,  in view of (\ref{e4}), \eqref{e5}, and \eqref{e2.15}, it follows that
\begin{align}\label{e2.16}
	a(\lambda)=S_{1,1}(z),\quad b(\lambda)=zS_{2,1}(z).
\end{align}
Since $Y^\pm(\lambda)$ is single-valued, we see that $a(\lambda)$, $b(\lambda)$ are also single-valued functions of $\lambda$.
By the analyticity of $Y^\pm(\lambda)$, it is easily seen that $a(\lambda)$ is holomorphic on $D_+$ and continuously extended to $D_+\cup\Sigma$;
in addition, $b(\lambda)$ is continuous on $\Sigma$.
By the symmetric property \eqref{e3.6} and the definition (\ref{e2.15}), it is readily seen that
\begin{align}
	&\breve{a}(\lambda):=\overline{a(\bar\lambda^{-1})}=c_n(t)\det(Y^+_1,Y^-_2)(\lambda,n,t),\label{e2.17}
\end{align}
and that $\breve{a}(\lambda)$ is holomorphic for $\lambda\in D_-$.
Furthermore, utilizing the asymptotic property \eqref{e2.14}, we obtain that
\begin{align*}
	&\lim_{\lambda\to\infty}a(\lambda)=1,\quad \breve{a}(0)=1.
\end{align*}

The assertion that when $q(0)\in \ell^{2,1}$, the corresponding reflection coefficient $r$ belongs to $H^1$ is checked in the following proposition.
\begin{proposition}\label{p2.2}
	Suppose that $q(0)\in \ell^{2,1}$ and there is no spectral singularity, that is, $a(\lambda)$ admits no zero on $\Sigma$.
	Let $r(\lambda)$ be the corresponding reflection coefficient as defined in \eqref{e2.20}.
	We assert that $r\in H^1$.
\end{proposition}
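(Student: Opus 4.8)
The plan is to use the Banach-algebra structure of the one-dimensional Sobolev space $H^1(\Sigma)$ to reduce the claim to separate $H^1$-bounds on the scattering coefficients $a$ and $b$. Since $a$ is continuous on the compact circle $\Sigma$ and, by the no-spectral-singularity hypothesis, has no zero on $\Sigma$, we have $\inf_\Sigma\abs{a}>0$. Because $H^1(\Sigma)$ embeds into $C^0(\Sigma)$ (this is the $\tfrac12$-H\"older continuity recorded after \eqref{e2.20}) it is a Banach algebra, and it is closed under taking reciprocals of functions bounded away from zero; indeed $\partial_\theta(1/a)=-\partial_\theta a/a^2\in L^2(\Sigma)$ once $\partial_\theta a\in L^2(\Sigma)$. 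Hence it suffices to prove $a\in H^1(\Sigma)$ and $b\in H^1(\Sigma)$, after which $r=b/a\in H^1$ follows from \eqref{e2.20} by the product rule.

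To prove these two membership statements I would work with Fourier series on $\Sigma$. Writing $\lambda=e^{\ii\theta}$, Parseval's identity gives, for $f(\lambda)=\sum_m\hat f_m\lambda^m$,
\begin{equation*}
	\norm{f}_{H^1(\Sigma)}^2\sim\sum_m(1+m^2)\abs{\hat f_m}^2,
\end{equation*}
so the entire task becomes showing that the Fourier coefficients of $a$ and $b$ lie in $\ell^{2,1}$. This is the discrete form of the principle that one moment of decay of the potential buys one derivative of smoothness of the scattering data: the plain $\ell^2$ part of the hypothesis produces membership in $L^2(\Sigma)$, while the weight $(1+n^2)$ of $\ell^{2,1}$ produces the $L^2(\Sigma)$-bound on the $\theta$-derivative. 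Concretely, evaluating the definitions \eqref{e2.15} at $t=0$ (so that $e^{\phi}=\lambda^{n}$ in \eqref{e2.18}) and inserting the Volterra summation equations \eqref{e6}, a short computation at linear order in $q$ gives $b(\lambda)\approx -c_{-\infty}\,\lambda\sum_{k}\overline{q_k}\,\lambda^{k}$, so the prefactor $\lambda^n$ cancels the $n$-dependence of the Volterra kernel and the Fourier frequency is tied to the lattice index $k$. Thus the $m$-th coefficient of $b$ is, to leading order, $-\overline{q_{m-1}}$, and $\partial_\theta$ multiplies it by $\ii m$, converting $q_{m-1}$ into $m\,q_{m-1}$, which is square-summable precisely because of the weight in $\ell^{2,1}$. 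The coefficient $a$ is even more favorable: since $Q$ is off-diagonal, the Born analysis shows that the diagonal entries of $Y^\pm$, and hence $\det[Y^-_1,Y^+_2]$, equal $1+O(q^2)$, so $a-1=O(q^2)$; its Fourier coefficients are autocorrelation-type bilinear expressions in $q$ and therefore lie in $\ell^{2,1}$ by a bilinear convolution bound.

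The main obstacle is to upgrade these linear heuristics to the full nonlinear scattering map, i.e.\ to control the higher-order terms of the Neumann series for $Y^\pm$ in the $H^1$-norm uniformly on $\Sigma$. For the $L^2$-bound this is routine: since $\ell^{2,1}\subset\ell^1$, the Neumann series built from the Volterra kernels in \eqref{e6} converges uniformly on $\Sigma$, giving $Y^\pm,a,b\in L^\infty(\Sigma)\cap L^2(\Sigma)$; here the finiteness of $c_{-\infty}$ and the persistence $q(0)\in\ell^{2,1}$ from Proposition \ref{p2.1} enter only through $q(0)\in\ell^1$. The delicate point is the derivative. Differentiating \eqref{e6} in $\theta$ shows that $\partial_\theta Y^\pm$ solves an inhomogeneous Volterra equation whose forcing is obtained by pulling the frequency factor out of the $\lambda$-powers in the kernel; after the same cancellation of the $\lambda^n$-prefactor as above, this factor is the lattice index, and the forcing is of the schematic type $k\,q_k\,Y^\pm$. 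The weight $(1+k^2)$ of $\ell^{2,1}$ is exactly what is needed to keep such forcing in $L^2(\Sigma)$, and a Volterra/Gronwall estimate---using the uniform $L^\infty$-bound on $Y^\pm$ from the previous step and, for the multilinear terms, distributing the single available weight onto one factor while absorbing the rest into $\ell^1$---then yields $\partial_\theta Y^\pm\in L^2(\Sigma)$ with norm $\lesssim\norm{q(0)}_{2,1}$. Substituting back into \eqref{e2.15} gives $\partial_\theta a,\partial_\theta b\in L^2(\Sigma)$, completing $a,b\in H^1(\Sigma)$ and hence, by the reduction of the first paragraph, $r\in H^1$.
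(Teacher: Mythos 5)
Your proposal is correct and takes essentially the same route as the paper: reduce $r=b/a$ to $a,b\in H^1$ via the no-spectral-singularity hypothesis, then control the Volterra/Neumann series for the (conjugated) Jost solutions, where $\partial_\theta$ pulls down a lattice index that is absorbed by placing the $\ell^{2,1}$-weight on a single factor --- paired with the test function's Fourier coefficients via Parseval/Cauchy--Schwarz --- while the remaining factors are summed in $\ell^1$, with factorial decay of the ordered sums giving convergence of the series. The only cosmetic difference is that you differentiate the Volterra equation and close with a Gronwall-type resolvent bound, whereas the paper differentiates each multilinear term of the Neumann series directly (its estimates \eqref{e2.31}, \eqref{e2.43}); the substantive estimates are identical.
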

Before we prove Proposition \ref{p2.2}, transform the modified Jost solution for $t=0$ into
\begin{align}\label{e2.3}
	\tilde Y^\pm=\tilde Y^\pm(\lambda,n)=\lambda^{-n\sigma_3/2}Y^{\pm}(\lambda,n,0)\lambda^{n\sigma_3/2},
\end{align}
and it is a little abuse of notation that we simply denote $q_n=q_n(0)$ in the proof below.
Rewrite $a(\lambda)$, $b(\lambda)$ by substituting \eqref{e2.3} into \eqref{e2.15},
\begin{align}\label{e2.25}
	a(\lambda)=c_n(0)\det(\tilde Y^-_1,\tilde Y^+_2)(\lambda,n),\quad b(\lambda)=c_n(0)\det(\tilde Y^+_1,\tilde Y^-_1)(\lambda,n).
\end{align}
and it is readily seen that $\tilde Y^\pm$ satisfy the following Volterra summation equation:
\begin{align}\label{e2.4}
	\tilde Y^{\pm}=I+\mathcal{T}^\pm \tilde Y^{\pm},
\end{align}
where the linear integral operators $\mathcal{T}^\pm$ are defined for any $2\times1$ matrix function $Y=Y(n)$ as
\begin{subequations}\label{e2.24}
	\begin{align}
		&\mathcal{T}^{-}Y(\lambda,n)=\sum_{k=-\infty}^{n-1}\left(\begin{matrix}
			0&q_k\\-\overline{q_k}&0
		\end{matrix}\right)\lambda^{(k+1)\sigma_3}Y(k),\\
		&\mathcal{T}^{+}Y(\lambda,n)=-\sum_{k=n}^{+\infty}\left(\begin{matrix}
			0&q_k\\-\overline{q_k}&0
		\end{matrix}\right)\lambda^{(k+1)\sigma_3}Y(k).
	\end{align}
\end{subequations}
Recalling that $q\in\ell^{2,1}$, we obtain that the Volterra summation equations \eqref{e2.4} are uniquely solved by
\begin{align}\label{e2.2}
	\tilde Y^{\pm}=\sum_{k=0}^{+\infty}(\mathcal{T}^\pm)^kI.
\end{align}
In addition, by \eqref{e2.24}, it is readily seen that $\mathcal{T}^\pm$ admits the symmetry on the unit circle
\begin{align}\label{e2.27}
	\sigma_2\overline{\mathcal{T}^\pm \tilde Y}\sigma_2=\mathcal{T}^\pm(\overline{\sigma_2\tilde Y\sigma_2}).
\end{align}
\begin{proof}[Proof of Proposition \ref{p2.2}]
	Since $a(\lambda)$ does not vanish on the jump contour $\Sigma$, if we prove that
	\begin{align}\label{e2.23}
		a,b\in H^1,
	\end{align}
	then, the results follow.
	By \eqref{e2.25}, to prove \eqref{e2.23}, we only have to verify that
	\begin{align}\label{e2.29}
		\tilde Y^\pm(\cdot,0)\in H^1.
	\end{align}
	
	By direct computation, we obtain that for any $m\in\mathbb{Z}^+$,
	\begin{subequations}\label{e2.30}
		\begin{align}
			&(\mathcal{T}^-)^{2m-1}\ylk(\lambda,n)=(-1)^m\sum_{-\infty<k_{2l-1}<\cdots<k_1<n}\left(\begin{matrix}
				0\\\prod_{\text{Even }j}q_{k_j}\overline{\prod_{\text{Odd }j}q_{k_j}}\lambda^{A_{2m-1}}
			\end{matrix}\right),\label{e2.30a}\\
			&(\mathcal{T}^-)^{2m}\ylk(\lambda,n)=(-1)^m\sum_{-\infty<k_{2m}<\cdots<k_1<n}\left(\begin{matrix}
				\prod_{\text{Odd }j}q_{k_j}\overline{\prod_{\text{Even }j}q_{k_j}}\lambda^{1-A_{2m}}\\0
			\end{matrix}\right),\\
			&(\mathcal{T}^+)^{2m-1}\ylk(\lambda,n)=(-1)^m\sum_{n\le k_{1}\le\cdots\le k_{2m-1}<+\infty}\left(\begin{matrix}
				0\\\prod_{\text{Even }j}q_{k_j}\overline{\prod_{\text{Odd }j}q_{k_j}}\lambda^{A_{2m-1}}
			\end{matrix}\right),\\
			&(\mathcal{T}^+)^{2m}\ylk(\lambda,n)=(-1)^m\sum_{n\le k_{1}\le\cdots\le k_{2m}<+\infty}\left(\begin{matrix}
				\prod_{\text{Odd }j}q_{k_j}\overline{\prod_{\text{Even }j}q_{k_j}}\lambda^{1-A_{2m}}\\0
			\end{matrix}\right),
		\end{align}
	\end{subequations}
	where $A_m=\sum_{j=1}^m(-1)^{j-1}k_{j}+1$.
	Denoting $N(m,\lambda,n)$ as the non-zero entry of $(\mathcal{T}^-)^{2m-1}\ylk(\lambda,n)$, we obtain that for any $g(\lambda)\in L^2(\Sigma)$, $\norm{g}_{L^2(\Sigma)}=1$,
	\begin{equation}\label{e2.31}
		\begin{split}
			&\abs{\oint_{\Sigma}N(m,\lambda,n)g(\lambda)\frac{\ddddd\lambda}{2\pi\ii}}=\abs{\sum_{-\infty<k_{2m-1}<\cdots<k_1<n}(\prod_{\text{Even }j}q_{k_j})\overline{(\prod_{\text{Odd }j}q_{k_j})}\hat g(A_{2m-1})}\\
			&\quad\quad\le\sum_{-\infty<k_{2m-1}<\cdots<k_1<n}\abs{q_{k_1}\cdots q_{k_{2m-1}}\hat g(A_{2m-1})}\le\frac{\norm{q}_{1}^{2m-2}}{(2m-2)!}\norm{q}_2\norm{\hat g}_2\le \left(1+\frac{\pi^2}{3}\right)^{m-1}\frac{\norm{q}^{2m-1}_{2,1}}{(2m-2)!},
		\end{split}
	\end{equation}
	where $\hat g(n)$ is the Fourier coefficient of $g$, thus, by Riesz representation theorem,
	\begin{align}\label{e2.32}
		\norm{N(m,\cdot,n)}_{L^2(\Sigma)}\le\left(1+\frac{\pi^2}{3}\right)^{m-1}\frac{\norm{q}^{2m-1}_{2,1}}{(2m-2)!}.
	\end{align}
	We also have that for $\theta=\arg\lambda\in[0,2\pi)$,
	\begin{equation}\label{e2.44}
		\begin{split}
			&\partial_\theta N=\ii \left(N+\sum_{j=1}^{2m-1}(-1)^{m+j-1}N_j\right),\\
			&N_j=N_j(m,\lambda,n)=\sum_{-\infty<k_{2m-1}<\cdots<k_1<n}(\prod_{\text{Even }j'}q_{k_{j'}})\overline{(\prod_{\text{Odd }j'}q_{k_{j'}})}k_j\lambda^{A_{2m-1}}.
		\end{split}
	\end{equation}
	Using the same technique in \eqref{e2.31}, we have
	\begin{equation}\label{e2.43}
		\begin{split}
			\abs{\oint_{\Sigma}N_j(m,\lambda,n)g(\lambda)\frac{\ddddd\lambda}{2\pi\ii}}&\le \sum_{-\infty<k_{2m-1}<\cdots<k_1<n}\abs{(\prod_{j'\ne j}q_{k_{j'}})(k_jq_{k_j})\hat g(A_{2m-1})}\\
			&\le\frac{\norm{q}_{1}^{2m-1}}{(2m-2)!}\norm{q}_{2,1}\norm{\hat g}_2\le \left(1+\frac{\pi^2}{3}\right)^{m-1}\frac{\norm{q}^{2m-1}_{2,1}}{(2m-2)!},
		\end{split}
	\end{equation}
	thus, by \eqref{e2.44} and Riesz representation theorem, it is readily seen that
	\begin{align}\label{e2.35}
		\norm{\partial_\theta N(m,\cdot,n)}_{L^2(\Sigma)}\le 2m\left(1+\frac{\pi^2}{3}\right)^{m-1}\frac{\norm{q}^{2m-1}_{2,1}}{(2m-2)!}.
	\end{align}
	By \eqref{e2.32} and \eqref{e2.35}, it follows that
	\begin{align}
		\norm{N(m,\cdot,n)}_{H^1}\le(2m+1)\left(1+\frac{\pi^2}{3}\right)^{m-1}\frac{\norm{q}^{2m-1}_{2,1}}{(2m-2)!},
	\end{align}
	that is,
	\begin{align}
		\norm{(\mathcal{T}^-)^{2m-1}\ylk(\cdot,n)}_{H^1}\le(2m+1)\left(1+\frac{\pi^2}{3}\right)^{m-1}\frac{\norm{q}^{2m-1}_{2,1}}{(2m-2)!}.
	\end{align}
	Applying the technique for the estimate in \eqref{e2.30a}, we estimate the others in \eqref{e2.30} and obtain that
	\begin{align}\label{e2.37}
		\norm{(\mathcal{T}^\pm)^k\ylk(\cdot,n)}_{H^1}\le(k+2)\left(1+\frac{\pi^2}{3}\right)^{\frac{k-1}{2}}\frac{\norm{q}^{k}_{2,1}}{(k-1)!}.
	\end{align}
	Additionally, by \eqref{e2.27}, it is readily seen that for any $n\in\mathbb{Z}^+$,
	\begin{align}
		\sigma_2\overline{(\mathcal{T}^\pm)^kI}\sigma_2=\mathcal{T}^\pm(\sigma_2\overline{(\mathcal{T}^\pm)^{k-1}I}\sigma_2)=\cdots=(\mathcal{T}^\pm)^kI,
	\end{align}
	thus,
	\begin{align}
		(\mathcal{T}^\pm)^k\ly(\cdot,n)=\left(\begin{matrix}
			0&-1\\1&0
		\end{matrix}\right)\overline{(\mathcal{T}^\pm)^k\ylk(\cdot,n)}\in H^1,
	\end{align}
	and
	\begin{align}\label{e2.38}
		\norm{\mathcal{T}^\pm)^k\ly(\cdot,n)}_{H^1}\le(k+2)\left(1+\frac{\pi^2}{3}\right)^{\frac{k-1}{2}}\frac{\norm{q}^{k}_{2,1}}{(k-1)!}.
	\end{align}
	
	In view of \eqref{e2.2}, \eqref{e2.37}, and \eqref{e2.38}, applying dominated convergence theorem, we obtain \eqref{e2.29}.
	This proves the results.
\end{proof}

\subsection{Discrete spectrum}

Under the assumption of no spectral singularity, the discrete spectrum $\mathcal{Z}\cup\overline{\mathcal{Z}^{-1}}$ is finite, and the holomorphic function $a(\lambda)$ admits a zero at each point of $\mathcal{Z}$. Set the zero's order as $\alpha_{l}\in\mathbb{N}^+$ at $\lambda=\lambda_l$, $l=1,\dots,l_0$.
By the symmetric property \eqref{e2.17}, it is clear that $\breve{a}(\lambda)$ admits an $\alpha_{l}$-order zero at each $\lambda=\bar\lambda_{l}^{-1}\in\overline{\mathcal{Z}^{-1}}$, too.

Since $\lambda_{l}$ is the $\alpha_{l}$-order zero of $a(\lambda)$, by taking the $\alpha_l$-order derivative of $a(\lambda)$ in \eqref{e2.15}, it turns out that there exist constants: $\beta_{\lambda_{l},0},\beta_{\lambda_{l},1},\dots,\beta_{\lambda_{l},\alpha-1}$, such that $\beta_{\lambda_{l},0}\ne0$ and for any $\alpha=0,\dots,\alpha_{l}-1$,
\begin{align}\label{e19s}
	&\partial_\lambda^{\alpha}Y^-_1(\lambda_{l})=\sum_{\alpha'=0}^{\alpha}\left(\begin{matrix}
		\alpha\\\alpha'
	\end{matrix}\right)\partial_\lambda^{\alpha-\alpha'}Y^+_2(\lambda_{l})(\sum_{j=0}^{\alpha'}\left(\begin{matrix}
		\alpha'\\j
	\end{matrix}\right)\beta_{\lambda_{l},j}\partial_\lambda^{\alpha'-j}(e^{-\phi(\lambda_{l})})),
\end{align}
where  $\left(\begin{matrix}
	\alpha\\\alpha'
\end{matrix}\right)$ denotes the binomial coefficient.
Since $\breve a(\lambda)$ also admits an $\alpha_l$-order zero at $\lambda=\bar\lambda_l^{-1}\in\overline{\mathcal{Z}^{-1}}$, there exist constants $\beta_{\bar\lambda_{l}^{-1},0},\dots,\beta_{\bar\lambda_{l}^{-1},{\alpha_{l}-1}}$, such that
\begin{align}\label{e13}
	&\partial_\lambda^{\alpha}Y^-_2(\bar\lambda_{l}^{-1})=\sum_{\alpha'=0}^{\alpha}\left(\begin{matrix}
		\alpha\\\alpha'
	\end{matrix}\right)\partial_\lambda^{\alpha-\alpha'}Y^+_1(\bar\lambda_{l}^{-1})(\sum_{j=0}^{\alpha'}\left(\begin{matrix}
		\alpha'\\j
	\end{matrix}\right)\beta_{\bar\lambda_{l}^{-1},j}\partial_\lambda^{\alpha'-j}(e^{\phi(\bar\lambda_{l}^{-1})})),
\end{align}
especially, $\beta_{\bar\lambda_{l}^{-1},0}=-\overline{\beta_{\lambda_{l},0}}\ne0$.
Moreover, considering the symmetric property \eqref{e3.6} in \eqref{e19s} and \eqref{e13}, we have the symmetry for $\beta_{\lambda_l,j}$ and $\beta_{\bar\lambda_l^{-1},j}$, $j=1,...,\alpha_l-1$:
\begin{align}\label{e3.33}
	\beta_{\bar\lambda_l^{-1},j}=(-1)^{j+1}\sum_{j'=1}^{j}\left(\frac{j!}{j'!}\left(\begin{matrix}
		j-1\\j'-1
	\end{matrix}\right)\overline{\lambda_l^{j+j'}\beta_{\lambda_l,j'}}\right).
\end{align}

\subsection{Reconstruction formula}
In this part, we construct the Riemann-Hilbert problem and the reconstruction formula.
Set the $2\times2$ matrix-valued function
\begin{align}\label{e2.21}
	M(\lambda)=M(\lambda,n,t)=\begin{cases}
		\left(P\left(Y^+_1,\frac{Y^-_2}{\breve a}\right)\right)(\lambda,n,t),&\lambda\in D_-,\\
		\left(P\left(\frac{Y^-_1}{a},Y^+_2\right)\right)(\lambda,n,t),&\lambda\in D_+,
	\end{cases}
\end{align}
where $P=P(n,t)=\left(\begin{matrix}
	1&0\\c_n(t)\overline{q_{n-1}}(t)&c_n(t)
\end{matrix}\right)$.
We claim that $M(\lambda)$ solves RH problem \ref{r1}.
By the analyticity of $Y^\pm(\lambda)$, we obtain the analyticity of $M(\lambda)$.
By the asymptotic behavior \eqref{e2.14} of $Y^\pm(\lambda)$ at the infinity, we derive the normalization of $M(\lambda)$ at $\lambda\to\infty$.
Now, we focus on the residue condition.
Recalling that $\lambda=\lambda_{l}$ is an $\alpha_{l}$-order zero of $a(\lambda)$, it is naturally an $\alpha_{l}$-order pole of $M(\lambda,n,t)$.
If denote $\underset{\lambda=\lambda'}{\LL_{-\alpha}}M$ as the coefficient of $(\lambda-\lambda')^{-\alpha}$ in the Laurent's expansion of $M(\lambda)$ at $\lambda=\lambda'$, then by the argument of complex analysis, we obtain the formula to compute the coefficients
\begin{align}\label{e2.22}
	\underset{\lambda=\lambda_{l}}{\LL_{-\alpha}}M=\frac{1}{(\alpha_{l}-\alpha)!}\partial_\lambda^{\alpha_{l}-\alpha}[M(\lambda)(\lambda-\lambda_{l})^{\alpha_{l}}]\Big|_{\lambda=\lambda_{l}}.
\end{align}
Considering \eqref{e19s}, \eqref{e2.21}, and \eqref{e2.22}, we obtain the residue condition \eqref{e16}, and \eqref{e2.23b} is verified similarly.
The jump condition is the consequence of \eqref{e4}, \eqref{e5}, and \eqref{e2.21}.
\begin{rhp}\label{r1}
	\
	\begin{itemize}
		\item $M(\lambda)$ is holomorphic in $\mathbb{C}\setminus\Sigma$.
		\item As $\lambda\to\infty$, we have $M(\lambda)=I+\oo(\lambda^{-1})$.
		\item At the $\alpha_{l}$-order pole $\lambda=\lambda_{l}\in \mathcal{Z}$, we have
		\begin{subequations}
			\begin{align}\label{e16}
				\underset{\lambda=\lambda_{l}}{\LL_{-\alpha}}M=\sum_{j=0}^{\alpha_{l}-\alpha}\frac{\partial_\lambda^{\alpha_{l}-\alpha-j}M(\lambda_{l})}{(\alpha_{l}-\alpha-j)!}\left(\begin{matrix}
					0&0\\p_{\lambda_{l},j}e^{-\phi(\lambda_{l})}&0
				\end{matrix}\right),
			\end{align}
			where $\alpha=0,\dots,\alpha_{l}-1$ and $p_{\lambda_{l},j}$ is a polynomial of $(n,t)$ of order at most $j$
			\begin{align*}
				p_{\lambda_{l},j}
				=\sum_{j'=0}^{j}\sum_{j''=0}^{j'}\partial_\lambda^{j''}(\frac{(\cdot-\lambda_{l})^{\alpha_{l}}}{a})|_{\lambda=\lambda_{l}}\frac{\beta_{\lambda_{l},j'-j''}\partial_\lambda^{j-j'}\left(e^{-\phi}\right)(\lambda_{l})}{(j-j')!(j'-j'')!j''!e^{-\phi(\lambda_{l})}}.
			\end{align*}
			And at $\lambda=\bar\lambda_{l}^{-1}$, the residue condition is
			\begin{align}\label{e2.23b}
				\underset{\lambda=\bar\lambda_{l}^{-1}}{\LL_{-\alpha}}M=\sum_{j=0}^{\alpha_{l}-\alpha}\frac{\partial_\lambda^{\alpha_{l}-\alpha-j}M(\bar\lambda_{l}^{-1})}{(\alpha_{l}-\alpha-j)!}\left(\begin{matrix}
					0&p_{\bar\lambda_{l}^{-1},j}e^{\phi(\bar\lambda_{l}^{-1})}\\0&0
				\end{matrix}\right),
			\end{align}
			
		\end{subequations}
		where
		\begin{align*}
			p_{\bar\lambda_{l}^{-1},j}=\sum_{j'=0}^{j}\sum_{j''=0}^{j'}\partial_\lambda^{j''}(\frac{(\cdot-\bar\lambda_{l}^{-1})^{\alpha_{l}}}{\breve a})|_{\lambda=\bar\lambda_{l}^{-1}}\frac{\beta_{\bar\lambda_{l}^{-1},j'-j''}\partial_\lambda^{j-j'}\left(e^{\phi}\right)(\bar\lambda_{l}^{-1})}{(j-j')!(j'-j'')!j''!e^{\phi(\bar\lambda_{l}^{-1})}}
		\end{align*}
		\item If we set the boundary value of $M(\lambda)$ on $\lambda\in\Sigma$ as $M_\pm (\lambda)$, then
		\begin{align*}
			&M_+(\lambda)=M_-(\lambda)V(\lambda),\quad V(\lambda)=V(\lambda,n,t)=\left(\begin{matrix}
				1+\lvert r(\lambda)\rvert^2&\overline{r(\lambda)}e^{\phi(\lambda,n,t)}\\r(\lambda)e^{-\phi(\lambda,n,t)}&1
			\end{matrix}\right).
		\end{align*}
	\end{itemize}
\end{rhp}

By the asymptotic property \eqref{e2.14a} of $Y^\pm(\lambda)$ at $\lambda\to0$ and the formulization of $M(\lambda)$ in \eqref{e2.21}, we find that the solution of \eqref{e1} is recovered by the solution of RH problem \ref{r1}, which means the reconstruction formula \eqref{e2.46}.

\section{Fredholm alternative, pure RH problem and the solvability}\label{s4}

In this section, utilizing some triangular matrix function $\left(\begin{matrix}
	1&0\\fe^{-\phi}&1
\end{matrix}\right)$ and $\left(\begin{matrix}
	1&\breve fe^{\phi}\\0&1
\end{matrix}\right)$, we remove the poles in the original RH problem and transform it into a new RH problem without singularity.
In the new RH problem, we find that the jump matrix is self-adjoint, and as a consequence, using the Fredholm theory, we prove that the RH problem is uniquely solved.

\subsection{Remove the poles}
In this section, we construct some rational functions $f(\lambda)$, $\breve f(\lambda)$ are rational function such that the matrix-valued functions $M(\lambda)\left(\begin{matrix}
	1&0\\f(\lambda)e^{-\phi(\lambda)}&1
\end{matrix}\right)$, $M(\lambda)\left(\begin{matrix}
	1&\breve f(\lambda)e^{\phi(\lambda)}\\0&1
\end{matrix}\right)$ are analytic on $D_+$, $D_-\setminus\{0\}$, respectively.

Before proving the results for the region $D_+\cup D_-\setminus\{0\}$, we consider the scenario on the neighborhood of each $\lambda_{l}$ or $\bar\lambda_{l}^{-1}$ as the following proposition.

\begin{proposition}\label{p3}
There exist polynomials $f_{\lambda_{l}}(\lambda)$, $f_{\bar\lambda_{l}^{-1}}(\lambda)$ of $(\lambda-\lambda_{l})^{-1}$, $(\lambda-\bar\lambda_{l}^{-1})^{-1}$ with order not more than $\alpha_{l}$, respectively, such that for $l=1,\dots,l_0$ and $\alpha=1,\dots,\alpha_{l}$,
	\begin{align}\label{e14}
		\underset{\lambda=\lambda_{l}}{\LL_{-\alpha}}(M\left(\begin{matrix}
			1&0\\f_{\lambda_{l}}e^{-\phi}&1
		\end{matrix}\right))\equiv0, \quad \underset{\lambda=\bar\lambda_{l}^{-1}}{\LL_{-\alpha}}(M\left(\begin{matrix}
			1&f_{\bar\lambda_{l}^{-1}}e^{\phi}\\0&1
		\end{matrix}\right))\equiv0.
	\end{align}
\end{proposition}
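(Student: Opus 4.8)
The plan is to work one pole at a time and to exploit that the singular part of $M$ lives in a single column. Fix $\lambda_{l'}\in\mathcal Z$ and recall from \eqref{e2.21} that on $D_+$ one has $M=P[Y^-_1/a,\,Y^+_2]$, so the $\alpha_{l'}$-order pole of $M$ at $\lambda_{l'}$ comes only from the zero of $a$ and is carried entirely by the first column $M_1=PY^-_1/a$, while the second column $M_2=PY^+_2$ is holomorphic there. Right-multiplication by $\left[\begin{smallmatrix}1&0\\f_{\lambda_{l'}}e^{\phi}&1\end{smallmatrix}\right]$ replaces $M_1$ by $M_1+f_{\lambda_{l'}}e^{\phi}M_2$ and leaves $M_2$ untouched; hence it suffices to choose the scalar $f_{\lambda_{l'}}$ so that $M_1+f_{\lambda_{l'}}e^{\phi}M_2$ is holomorphic at $\lambda_{l'}$, which is precisely the first identity in \eqref{e14}.

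First I would convert the connection relation \eqref{e19s} into a product form. Read backwards through the Leibniz rule, \eqref{e19s} says exactly that, to order $\alpha_{l'}-1$ at $\lambda_{l'}$, $Y^-_1$ agrees with $Y^+_2\,\beta(\lambda)e^{-\phi(\lambda)}$, where $\beta(\lambda)=\sum_{m=0}^{\alpha_{l'}-1}\beta_{\lambda_{l'},m}(\lambda-\lambda_{l'})^m/m!$ is the polynomial interpolating the connection constants. Dividing by $a$ and multiplying by the constant matrix $P$ then gives, near $\lambda_{l'}$,
\[
M_1=s(\lambda)\,M_2+(\text{holomorphic}),\qquad s(\lambda)\deff\frac{\beta(\lambda)e^{-\phi(\lambda)}}{a(\lambda)},
\]
since the order-$\alpha_{l'}$ error in the agreement of $Y^-_1$ with $Y^+_2\beta e^{-\phi}$ is annihilated by the order-$\alpha_{l'}$ zero of $a$. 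Here $s$ has a pole of order exactly $\alpha_{l'}$ because $\beta(\lambda_{l'})=\beta_{\lambda_{l'},0}\neq0$.

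The construction of $f_{\lambda_{l'}}$ is then immediate: I would define $f_{\lambda_{l'}}$ to be the principal (negative-power) part at $\lambda_{l'}$ of $-\beta(\lambda)e^{-2\phi(\lambda)}/a(\lambda)$. Since $a$ has an $\alpha_{l'}$-order zero and the remaining factors are holomorphic and nonvanishing there, this principal part is automatically a polynomial in $(\lambda-\lambda_{l'})^{-1}$ of order at most $\alpha_{l'}$, as the statement requires. By the very definition of the principal part, $f_{\lambda_{l'}}+\beta e^{-2\phi}/a$ is holomorphic at $\lambda_{l'}$, hence so is $e^{\phi}\big(f_{\lambda_{l'}}+\beta e^{-2\phi}/a\big)=f_{\lambda_{l'}}e^{\phi}+s$. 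Therefore $M_1+f_{\lambda_{l'}}e^{\phi}M_2=(s+f_{\lambda_{l'}}e^{\phi})M_2+(\text{holomorphic})$ is holomorphic at $\lambda_{l'}$, which is the first identity in \eqref{e14}. The pole at $\bar\lambda_{l'}^{-1}$ is treated identically: by the symmetric relation \eqref{e13} the singularity now sits in the second column, so one uses the upper-triangular multiplier and sets $f_{\bar\lambda_{l'}^{-1}}$ equal to the principal part of $-\breve\beta e^{2\phi}/\breve a$, with $\breve\beta$ built from the constants $\beta_{\bar\lambda_{l'}^{-1},m}$.

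The part requiring genuine care is the bookkeeping of orders and the consistency with the explicit residue formula \eqref{e16}. One must check that \eqref{e19s} indeed supplies agreement to the full order $\alpha_{l'}-1$, so that $M_1-sM_2$ is truly holomorphic rather than merely of lower pole order, and that the principal-part construction reproduces the coefficients $p_{\lambda_{l'},j}$ of \eqref{e16}. Equivalently, one may recast the requirement as the lower-triangular linear system $\sum_{k=\alpha}^{\alpha_{l'}}c_k\,g_{k-\alpha}=-\underset{\lambda=\lambda_{l'}}{\LL_{-\alpha}}M$ for $\alpha=\alpha_{l'},\dots,1$, where $c_k$ are the Laurent coefficients of $f_{\lambda_{l'}}$ and $g_m$ the Taylor coefficients of $e^{\phi}M_2$; its diagonal entries are multiples of $e^{\phi(\lambda_{l'})}M_2(\lambda_{l'})$, and the residue condition guarantees that each right-hand side lies in the span of $\{M_2,\partial_\lambda M_2,\dots\}$, so the vector-valued system is consistent and solvable. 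I expect this consistency check to be the main obstacle; the direct principal-part argument above sidesteps it by never factoring out $M_2$, and I would present that as the primary route.
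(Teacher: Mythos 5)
Your route is genuinely different from the paper's. You read \eqref{e19s} as saying that $Y^-_1-Y^+_2\,\beta e^{-\phi}$ vanishes to order $\alpha_{l'}$ at $\lambda_{l'}$, divide by $a$ to get $M_1=sM_2+(\text{holomorphic})$, and kill the principal part; the paper instead substitutes the ansatz \eqref{e15} into \eqref{e14}, expands via \eqref{e19}, and solves the triangular linear system produced by the residue condition \eqref{e16} of RH problem \ref{r1}, arriving at the explicit constant-coefficient formula \eqref{e20}. The local algebra in your argument (order counting, removability after dividing by the $\alpha_{l'}$-order zero of $a$, holomorphy of $f_{\lambda_{l'}}e^{\phi}+s$) is correct as far as it goes.

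The genuine gap is in what your construction outputs. Your $f_{\lambda_{l'}}$ is the principal part of $-\beta e^{-2\phi}/a$, so its Laurent coefficients contain $e^{-2\phi(\lambda_{l'},n,t)}$ and its $\lambda$-derivatives: it depends on $(n,t)$, exponentially in $t$. That is not the object the proposition is about. The paper's $f_{\lambda_{l'}}$ in \eqref{e20} is a fixed rational function of $\lambda$ alone, and everything downstream requires exactly this: Proposition \ref{p4} builds a single rational $f(\lambda)$ out of the $f_{\lambda_{l'}}$ via the Vandermonde system \eqref{e27}, RH problem \ref{r2.7} and Sections \ref{s5}--\ref{s6} rest on the triangular factors having the form (rational in $\lambda$, independent of $(n,t)$) times $e^{\pm\phi}$, so that the sign of $\re\phi$ alone controls decay (e.g.\ \eqref{e5.35}, \eqref{e6.4}); with a built-in factor $e^{-2\phi(\lambda_{l'})}$ those estimates fail. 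The source of the defect is that you paired the norming factor $e^{-\phi(\lambda_{l'})}$ of \eqref{e19s} with the multiplier $f e^{+\phi}$ prescribed in \eqref{e14}, so the exponentials compound; the paper's proof starts from \eqref{e16}, whose norming factor is $e^{+\phi(\lambda_{l'})}$ and matches the multiplier, so the exponentials cancel and $f$ comes out with constant coefficients.

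You could not have patched this by the ``consistency check'' you deferred, because that check fails: \eqref{e19s} as written (with $e^{-\phi}$) and \eqref{e16} (with $e^{+\phi}$) are mutually inconsistent -- both holding would force the leading Laurent coefficient $\beta_{\lambda_{l'},0}\bigl(e^{\phi(\lambda_{l'})}-e^{-\phi(\lambda_{l'})}\bigr)M_2(\lambda_{l'})$ to vanish for all $(n,t)$, impossible since $\beta_{\lambda_{l'},0}\neq0$ and $M_2(\lambda_{l'})\neq0$. So your claim that the principal-part construction ``reproduces the coefficients $p_{\lambda_{l'},j}$ of \eqref{e16}'' is false; the two differ exactly by $e^{-2\phi(\lambda_{l'})}$. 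Since $M$ is, by definition in the proposition, the solution of RH problem \ref{r1}, the residue condition \eqref{e16} is the ground truth to work from (the sign in \eqref{e19s} is a slip of the paper). The repair is small: run your principal-part argument with $M_1=(\beta e^{+\phi}/a)M_2+(\text{holomorphic})$ as input; then $f_{\lambda_{l'}}$ is minus the principal part of $\beta/a$, which is rational with constant coefficients and is precisely \eqref{e20}. Equivalently, solve the lower-triangular system you relegated to a footnote -- that system is the paper's actual proof, and it is the route that yields the right $f$.
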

\begin{proof}
	Set
	\begin{align}\label{e15}
		f_{\lambda_{l}}(\lambda)=\sum_{j=1}^{\alpha_{l}}f_{\lambda_{l},j}(\lambda-\lambda_{l})^{-j},
	\end{align}
	and rewrite the first equation in \eqref{e14},
	\begin{align}\label{e18}
		0\equiv\underset{\lambda=\lambda_{l}}{\LL_{-\alpha}}(M\left(\begin{matrix}
			1&0\\f_{\lambda_{l}}e^{-\phi}&1
		\end{matrix}\right))=\underset{\lambda=\lambda_{l}}{\LL_{-\alpha}}M+\underset{\lambda=\lambda_{l}}{\LL_{-\alpha}}(M\left(\begin{matrix}
			0&0\\f_{\lambda_{l}}e^{-\phi}&0
		\end{matrix}\right)).
	\end{align}
	Recalling that the second column of $M(\lambda)$ is holomorphic on $\lambda\in D_+$, we rewrite the second term on the right-hand side of \eqref{e18}:
	\begin{align}\label{e19}
		\underset{\lambda=\lambda_{l}}{\LL_{-\alpha}}(M\left(\begin{matrix}
			0&0\\f_{\lambda_{l}}e^{-\phi}&0
		\end{matrix}\right))=\sum_{j=0}^{\alpha_{l}-\alpha}\frac{\partial_\lambda^{\alpha_{l}-\alpha-j}M}{(\alpha_{l}-\alpha-j)!}\left(\begin{matrix}
			0&0\\\sum_{j'=0}^{j}\frac{f_{\lambda_{l},\alpha_{l}-j'}\partial_\lambda^{j-j'}e^{-\phi}}{(j-j')!}&0
		\end{matrix}\right)\rvert_{\lambda=\lambda_{l}}.
	\end{align}
	Substituting \eqref{e16} and \eqref{e19} into \eqref{e18}, we obtain that
	\begin{align}\label{e4.5}
		f_{\lambda_{l},j}=\sum_{j'=0}^{\alpha_{l}-j}\partial_\lambda^{j'}(\frac{(\lambda-\lambda_{l})^{\alpha_{l}}}{a(\lambda)})|_{\lambda=\lambda_{l}}\frac{\beta_{\lambda_{l},\alpha_{l}-j-j'}}{(\alpha_{l}-j-j')!j'!}
	\end{align}
	and \eqref{e15}-\eqref{e4.5} solves \eqref{e18}, that is,
	\begin{align}\label{e20}
		f_{\lambda_{l}}(\lambda)=\sum_{j=1}^{\alpha_{l}}\sum_{j'=0}^{\alpha_{l}-j}\partial_\lambda^{j'}(\frac{(\lambda-\lambda_{l})^{\alpha_{l}}}{a(\lambda)})|_{\lambda=\lambda_{l}}\frac{\beta_{\lambda_{l},\alpha_{l}-j-j'}(\lambda-\lambda_{l})^{-j}}{(\alpha_{l}-j-j')!j'!}.
	\end{align}
	We have proved the existence of $f_{\lambda_k}(\lambda)$, and the formulation of $f_{\bar\lambda_{l}^{-1}}(\lambda)$ is similar, that is,
	\begin{align}\label{e4.7}
		f_{\bar\lambda_{l}^{-1}}(\lambda)=\sum_{j=1}^{\alpha_{l}}f_{\bar\lambda_l^{-1},j}(\lambda-\bar\lambda_{l}^{-1})^{-j},\quad f_{\bar\lambda_l^{-1},j}=\sum_{j'=0}^{\alpha_{l}-j}\partial_\lambda^{j'}(\frac{(\lambda-\bar\lambda_{l}^{-1})^{\alpha_{l}}}{\breve a(\lambda)})|_{\lambda=\bar\lambda_{l}^{-1}}\frac{\beta_{\bar\lambda_{l}^{-1},\alpha_{l}-j-j'}}{(\alpha_{l}-j-j')!j'!}.
	\end{align}
	We complete the proof.
\end{proof}
As shown in Proposition \ref{p3}, we have proved that the pole of $M(\lambda)$ can be removed by the rational triangular function at each point of the discrete spectrum, which means that in RH problem \ref{r1}, the residue condition can be transformed into the jump.
Based on this, we further find a rational triangular function on $D_+$ or $D_-\setminus\{0\}$ to remove the poles on the discrete spectrum uniformly, which is helpful when constructing the pure RH problem.
To obtain the rational functions, we apply the theory of generalized Vandermonde matrix \cite{kalman1984generalized} on the following proposition.

\begin{proposition}\label{p4}
	Set $M(\lambda)$ as the solution of RH problem \ref{r1}.
	There exists rational functions $f(\lambda)$, $\breve f(\lambda)$ such that $M(\lambda)\left(\begin{matrix}
		1&0\\f(\lambda)e^{-\phi(\lambda)}&1
	\end{matrix}\right)$, $M(\lambda)\left(\begin{matrix}
		1&\breve f(\lambda)e^{\phi(\lambda)}\\0&1
	\end{matrix}\right)$ are holomorphic on $D_+$, $D_-\setminus\{0\}$, respectively.
	Moreover, there is a pair of $(f,\breve{f})$ satisfying
	\begin{align}
		\breve{f}(\lambda)=-\overline{f(\bar\lambda^{-1})}.
	\end{align}
\end{proposition}
\begin{proof}
	Notice that Proposition \ref{p3} is a special case of Proposition \ref{p4} with $l_0=1$. To prove the results, we introduce the generalized Vandermonde matrix \cite{kalman1984generalized}
	\begin{align}
		V(\lambda_1,\dots,\lambda_{l_0},\alpha_1,\dots,\alpha_{l_0})=\left(\begin{matrix}
			1&\lambda_1&\cdots&\lambda_1^{\alpha_1}&\cdots&\lambda_1^\tau\\
			0&1&\cdots&\alpha_1\lambda_1^{\alpha_1-1}&\cdots&\tau\lambda_1^{\tau-1}\\
			\cdots&\ddots&\ddots&\cdots&\cdots&\cdots\\
			0&\cdots&0&1&\cdots&\frac{\tau!}{\alpha_1!}\lambda_1^{\tau-\alpha_1}\\
			\cdots&\cdots&\cdots&\cdots&\cdots&\cdots\\
			1&\lambda_{l_0}&\cdots&\lambda_{l_0}^{\alpha_{l_0}}&\cdots&\lambda_{l_0}^\tau\\
			0&1&\cdots&\alpha_{l_0}\lambda_{l_0}^{\alpha_{l_0}-1}&\cdots&\tau\lambda_{l_0}^{\tau-1}\\
			\cdots&\ddots&\ddots&\cdots&\cdots&\cdots\\
			0&\cdots&0&1&\cdots&\frac{\tau!}{\alpha_{l_0}!}\lambda_{l_0}^{\tau-\alpha_{l_0}}
		\end{matrix}\right),
	\end{align}
	where $\tau=\sum_{l=1}^{l_0}\alpha_{l}$. The generalized Vandermonde matrix is invertible and its determinant is
	\begin{equation}
		\det V(\lambda_1,\dots,\lambda_{l_0},\alpha_1,\dots,\alpha_{l_0})=\prod_{j> k}(\lambda_{j}-\lambda_k)^{\alpha_j\alpha_k}.
	\end{equation}
	
	To construct $f(\lambda)$, it is sufficient to construct a polynomial $g(\lambda)$ of $\lambda$ such that
	\begin{align}\label{e2.40}
		f(\lambda)=g(\lambda)\prod_{l=1}^{l_0}f_{\lambda_{l}}(\lambda),\quad g(\lambda)=\sum_{j=0}^{\tau-1}g_j\lambda^j,
	\end{align}
	and for any $l=1,\dots,l_0$, $\alpha=1,\dots,\alpha_{l}$,
	\begin{align}\label{e2.41}
		\underset{\lambda=\lambda_{l}}{\LL_{-\alpha}}f(\lambda_{l})=f_{\lambda_{l},\alpha}.
	\end{align}
	Seeing the formulization of $f(\lambda)$ in \eqref{e2.40}, we learn that $f(\lambda)$ admits an $\alpha_{l}$-order pole at $\lambda=\lambda_{l}$.
	Calculating the Laurent's expansion of $f$ at $\lambda=\lambda_l$, we obtain that
	\begin{equation}\label{e25}
		\begin{split}
			\underset{\lambda=\lambda_{l}}{\LL_{-\alpha}}f&=\frac{1}{(\alpha_{l}-\alpha)!}\partial_\lambda^{\alpha_{l}-\alpha}((\cdot-\lambda_{l})^{-\alpha_{l}}f_{\lambda_{l}}g\prod_{l'\ne l}f_{\lambda_{l'}})(\lambda_{l})\\
			&=\sum_{j=0}^{\alpha_{l}-\alpha}\frac{1}{j!}f_{\lambda_{l},\alpha+j}\partial_\lambda^j(g\prod_{l'\ne l}f_{\lambda_{l'}})(\lambda_{l}).
		\end{split}
	\end{equation}
	By comparing \eqref{e25} and \eqref{e2.41}, it is sufficient to solve the following linear system with $\tau$ variables $g_0,\dots,g_{\tau-1}$:
	\begin{align}\label{e26}
		\partial_\lambda^j(g\prod_{l'\ne l}f_{\lambda_{l'}})(\lambda_{l})=\delta_{0,j},
		\quad\delta_{0,j}=\begin{cases}
			1&j=0\\
			0&j\ne0
		\end{cases},\quad j=0,\dots,\alpha_{l}-1.
	\end{align}
	Equivalently, we rewrite \eqref{e26} in the form of matrix:
	\begin{equation}\label{e27}
		\begin{split}
			&\mathrm{blkdiag}(\{J_{l}(\lambda_{l})\}_{l=1}^{l_0}) V(\lambda_1,\dots,\lambda_{l_0},\alpha_1,\dots,\alpha_{l_0})(g_0,\dots,g_{\tau-1})^{t}=(1,\overset{\alpha_1-1}{\overbrace{0,\dots,0}},\dots,1,\overset{\alpha_{l_0}-1}{\overbrace{0,\dots,0}})^t
		\end{split}
	\end{equation}
	where $\mathrm{blkdiag}(\cdot)$ denotes the block diagonal matrix, and $J_{l}$ is an $\alpha_{l}\times\alpha_{l}$ matrix function:
	\begin{align}
		J_{l}=\left(\begin{matrix}
			\prod_{l'\ne l}f_{\lambda_{l'}}&0&\cdots&0\\
			\partial_\lambda\prod_{l'\ne l}f_{\lambda_{l'}}&\prod_{l'\ne l}f_{\lambda_{l'}}&\ddots&\vdots\\
			\vdots&\ddots&\ddots&0\\
			\frac{\partial_\lambda^{\alpha_{l}}\prod_{l'\ne l}f_{\lambda_{l'}}}{\alpha_{l}!}&\cdots&\partial_\lambda\prod_{l'\ne l}f_{\lambda_{l'}}&\prod_{l'\ne l}f_{\lambda_{l'}}
		\end{matrix}\right).
	\end{align}
	We default $f_{\lambda_{l'}}(\lambda_{l})\ne0$ for all $l'\ne l$, or let $f_{\lambda_{l'}}$ plus a constant to ensure its non-zero property at $\lambda_{l}$;
	therefore, the block diagonal matrix in \eqref{e27} is invertible;
	in addition, by the fact that the generalized Vandermonde matrix is invertible, it is readily seen that the linear system \eqref{e27} is uniquely solved.
	We have confirmed the existence of $f$.
	
	We assert that the rational function
	\begin{align}\label{e4.12}
		\breve f(\lambda)=-\overline{f(\bar\lambda^{-1})}
	\end{align}
	comply with the assumption of the proposition.
	We first prove that for $\lambda_l\in\mathcal{Z}$, in the Laurent's series of
	\begin{align}\label{e4.19}
		\breve f_{\bar\lambda_l^{-1}}(\lambda):=-\overline{f_{\lambda_l}(\bar\lambda^{-1})},
	\end{align}
	at $\lambda=\bar\lambda_l^{-1}$, the coefficients of $\left(\lambda-\bar\lambda_l^{-1}\right)^{-j}$ consist with $f_{\bar\lambda_l^{-1},j}$, $j=1,\dots,\alpha_l$, that is,
	\begin{align}\label{e4.10}
		f_{\bar\lambda_l^{-1},j}=\frac{1}{(\alpha_l-j)!}\partial_\lambda^{\alpha_l-j}\left((\cdot-\bar\lambda_l^{-1})^{\alpha_l}\breve f_{\bar\lambda_l^{-1}}\right)(\bar\lambda_l^{-1}).
	\end{align}
	In view of \eqref{e20}, it follows that
	\begin{equation}\label{e4.18}
		\begin{split}
			&\frac{1}{(\alpha_l-j)!}\partial_\lambda^{\alpha_l-j}\left((\cdot-\bar\lambda_l^{-1})^{\alpha_l}\breve f_{\bar\lambda_l^{-1}}\right)(\bar\lambda_l^{-1})\\
			&\quad\quad=\sum_{j'=0}^{\alpha_l-j}\sum_{j''=0}^{\alpha_l-j-j'}\overline{\left((-1)^{\alpha_l+1-j'-j''}\left(\begin{matrix}
					\alpha_l-j'-j''\\j
				\end{matrix}\right)\frac{\lambda_l^{j'+j''-\alpha_l-j}\beta_{\lambda_l,j''}}{j''!j'!}\left(\partial_\lambda^{j'}\left(\frac{(\cdot-\lambda_l)}{a}\right)(\lambda_l)\right)\right)}.
		\end{split}
	\end{equation}
	In view of \eqref{e4.7} and \eqref{e3.33}, it is easily seen that
	\begin{equation}\label{e4.14}
		\begin{split}
			f_{\bar\lambda_l^{-1},j}=&\sum_{j'=0}^{\alpha_l-j-1}\sum_{j''=1}^{\alpha_l-j-j'}\left(\frac{(-1)^{\alpha_l-j-j'-1}}{j'!j''!}\left(\begin{matrix}
				\alpha_l-j-j'-1\\j''-1
			\end{matrix}\right)\overline{\lambda_l^{\alpha_l-j-j'+j''}\beta_{\lambda_l,j''}}\partial_\lambda^{j'}\left(\frac{(\cdot-\bar\lambda_l^{-1})}{\breve a}\right)(\bar\lambda_l^{-1})\right)\\
			&-\frac{1}{(\alpha-j)!}\overline{\beta_{\lambda_l,0}}\partial_\lambda^{\alpha-j}\left(\frac{(\cdot-\bar\lambda_l^{-1})}{\breve a}\right)(\bar\lambda_l^{-1}).
		\end{split}
	\end{equation}
	Since $\breve{a}(\lambda)=\overline{a(\bar\lambda^{-1})}$, it is readily seen that
	\begin{align}
		\partial_\lambda^{j}\left(\frac{(\cdot-\bar\lambda_l^{-1})}{\breve a}\right)(\bar\lambda_l^{-1})=\sum_{j'=0}^{j}\bar\lambda_l^{-2\alpha_l+j+j'}(-1)^{\alpha_l+j'}\frac{(\alpha_l-j')!}{(\alpha_l-j)!}\left(\begin{matrix}
			j\\j'
		\end{matrix}\right)\overline{\partial_\lambda^{j'}\left(\frac{(\cdot-\lambda_l)}{a}\right)(\lambda_l)}.
	\end{align}
	Then, by comparing the coefficients of $\beta_{\lambda_l,j}\partial_\lambda^{j'}\left(\frac{(\cdot-\lambda_l)}{a}\right)(\lambda_l)$ in \eqref{e4.18} and \eqref{e4.14}, it is equivalent to the identity
	\begin{align}
		\sum_{j'''=0}^{\alpha_l-j-j'-j''}(-1)^{\alpha_l-j-j'-j''-j'''}\left(\begin{matrix}
			\alpha-j'\\j'''
		\end{matrix}\right)\left(\begin{matrix}
			\alpha_l-j-j'-j'''-1\\j''-1
		\end{matrix}\right)=\left(\begin{matrix}
			\alpha_l-j'-j''\\j
		\end{matrix}\right)
	\end{align}
	which is the same as the well-acknowledged binomial identity% when we take $n=\alpha-j'$, $k=\alpha_l-j-j'-j''$, $j'=k-j'''$,
	\begin{align}
		\sum_{j'=0}^k(-1)^{j'}\left(\begin{matrix}
			n\\k-j'
		\end{matrix}\right)\left(\begin{matrix}
			j'+j''-1\\j''-1
		\end{matrix}\right)=\left(\begin{matrix}
			n-j''\\j
		\end{matrix}\right).
	\end{align}
	This proves \eqref{e4.10} and means that $\breve f_{\bar\lambda_l^{-1}}(\lambda)$ fulfills the requirements of $f_{\bar\lambda_l^{-1}}$ in Proposition \ref{p3}.
	Setting
	\begin{align}
		\breve{f}(\lambda)=\breve{g}(\lambda)\prod_{l=1}^{l_0}\breve{f}_{\bar\lambda_l^{-1}}(\lambda),\quad \breve{g}(\lambda)=(-1)^{l_0+1}\overline{g(\bar\lambda^{-1})},\lambda\in D_-,
	\end{align}
	it is follows that
	\begin{align}
		\breve{f}(\lambda)=-\overline{f(\bar\lambda^{-1})}.
	\end{align}
	By direct computation, it is readily seen that for any $l=1,\dots,l_0$ and $j\in\mathbb{N}^+$
	\begin{align}
		&(\breve{g}\prod_{l'\ne l}\breve{f})(\bar\lambda_l^{-1})=\overline{(g\prod_{l'\ne l}f)(\lambda_l)}=1,\\
		&\partial_\lambda^j(\breve{g}\prod_{l'\ne l}\breve{f})(\bar\lambda_l^{-1})=(-1)^j\sum_{j'=1}^{j}\frac{j!}{j'!}\left(\begin{matrix}
			j-1\\j'-1
		\end{matrix}\right)\lambda_l^{j+j'}\overline{\partial_\lambda^{j'}(g\prod_{l'\ne l}f)(\lambda_l)}=0,
	\end{align}
	which means that for any $\alpha=1,\dots,\alpha_l$,
	\begin{align}
		\underset{\lambda=\bar\lambda_{l}^{-1}}{\LL_{-\alpha}}f=\sum_{j=0}^{\alpha_{l}-\alpha}\frac{1}{j!}f_{\bar\lambda_{l}^{-1},\alpha+j}\partial_\lambda^j(\breve g\prod_{l'\ne l}\breve f_{\bar\lambda_{l'}^{-1}})(\bar\lambda_{l}^{-1})=f_{\bar\lambda_l^{-1},\alpha}.
	\end{align}
	These prove the results.
\end{proof}

\subsection{Solvability}
We have constructed the triangular matrix functions in Proposition \ref{p4} to remove poles of $M(\lambda)$, however, since $\phi(\lambda)$ admits an essential singularity at $\lambda\to\infty$, the newly obtained matrix function is not normalized at the infinity, i.e., $M(\lambda)\left(\begin{matrix}
	1&0\\f(\lambda)e^{-\phi(\lambda)}&1
\end{matrix}\right)$ diverges at $\lambda\to\infty$.
Moreover, because the essential singularity of $\phi(\lambda)$ at $\lambda=0$, $M(\lambda)\left(\begin{matrix}
	1&\breve f(\lambda)e^{\phi(\lambda)}\\0&1
\end{matrix}\right)$ also admits an essential singularity at $\lambda=0$.
To avoid these essential singularities, we choose $\tilde\Sigma=\Sigma\cup\tilde\Sigma_1\cup\Sigma_2$ depicted in FIGURE \ref{f1}, where $\tilde\Sigma_1:=\{\lambda\in\mathbb{C}:\abs{\lambda}=\sup_{l=1,\dots,l_0}\{\abs{\lambda_l}\}+1\}$ and $\tilde\Sigma_2=(\tilde\Sigma_1)^{-1}$.
Transforming $M(\lambda)$ into
\begin{align}\label{e3.23}
	\tilde M(\lambda):=\tilde M(\lambda,n,t)=M(\lambda)\begin{cases}
		\left(\begin{matrix}
			1&0\\f(\lambda)e^{-\phi(\lambda)}&1
		\end{matrix}\right)&\lambda\in\tilde D_+=\{\abs{\lambda}\in(1,\sup_{l=1,\dots,l_0}\{\abs{\lambda_l}\}+1)\},\\
		\left(\begin{matrix}
			1&\breve f(\lambda)e^{\phi(\lambda)}\\0&1
		\end{matrix}\right)&\lambda\in\tilde D_-=\{\abs{\lambda}^{-1}\in(1,\sup_{l=1,\dots,l_0}\{\abs{\lambda_l}\}+1)\},\\
		I &otherwise,
	\end{cases}
\end{align}
we obtain a matrix-valued function $\tilde M(\lambda)$ that is holomorphic on $\mathbb{C}\setminus  \tilde\Sigma $, where $\tilde\Sigma=\Sigma\cup\tilde\Sigma_1\cup\tilde\Sigma_2$, and it is easily seen that it admits the following RH problem.
\begin{rhp}\label{r2.7}
	\
	\begin{itemize}
		\item $\tilde M(\lambda)$ is analytic in $\mathbb{C}\setminus\tilde\Sigma$.
		\item As $\lambda\to\infty$, we have $\tilde M(\lambda)=I+\oo(\lambda^{-1})$.
		\item On $\lambda\in\tilde\Sigma$, we have
		\begin{align}
			&\tilde M_+(\lambda)=\tilde M_-(\lambda)\tilde V(\lambda),\quad\tilde V(\lambda)=\begin{cases}
				\left(\begin{matrix}
					1&0\\f(\lambda)e^{-\phi(\lambda)}&1
				\end{matrix}\right)&\lambda\in\tilde\Sigma_1,\\
				\left(\begin{matrix}
					1&-\breve f(\lambda)e^{\phi(\lambda)}\\0&1
				\end{matrix}\right)&\lambda\in\tilde\Sigma_2,\\
				\left(\begin{matrix}
					1+\abs{f(\lambda)+r(\lambda)}^2&\overline{(r(\lambda)+f(\lambda))}e^{\phi(\lambda)}\\(f(\lambda)+r(\lambda))e^{-\phi(\lambda)}&1
				\end{matrix}\right)&\lambda\in\Sigma.
			\end{cases}
		\end{align}

	\end{itemize}
\end{rhp}
In RH problem \ref{r2.7}, it is easily seen that the jump matrix admits an upper-lower triangular factorization for $\lambda\in\tilde\Sigma$:
\begin{align}
	\tilde V(\lambda)&=(I-w_-(\lambda))^{-1}(I+w_+(\lambda))\label{e3.24},\\
	w_-(\lambda)&=\begin{cases}
		\mathbf{0}&\lambda\in\tilde\Sigma_1,\\
		\left(\begin{matrix}
			0&\breve f(\lambda)e^{\phi(\lambda)}\\0&0
		\end{matrix}\right)&\lambda\in\tilde\Sigma_2,\\
		\left(\begin{matrix}
			0&-\overline{(r(\lambda)+f(\lambda))}e^{\phi(\lambda)}\\0&0
		\end{matrix}\right)&\lambda\in\Sigma,
	\end{cases}\label{e3.25}\\
	w_+(\lambda)&=\begin{cases}
		\left(\begin{matrix}
			0&0\\f(\lambda)e^{-\phi(\lambda)}&0
		\end{matrix}\right)&\lambda\in\tilde\Sigma_1,\\
		\mathbf{0}&\lambda\in\tilde\Sigma_2,\\
		\left(\begin{matrix}
			0&0\\(r(\lambda)+f(\lambda))e^{-\phi(\lambda)}&0
		\end{matrix}\right)&\lambda\in\Sigma.
	\end{cases}\label{e3.26}
\end{align}

Introduce the Cauchy-type integral operator:
\begin{align}
	\cc^\Gamma g(\lambda)=\oint_\Gamma \frac{g(\varsigma)}{\varsigma-\lambda}\frac{\ddddd \varsigma}{2\pi \ii},\quad g\in L^2(\Gamma),
\end{align}
where $\Gamma$ consists of finite oriented Jordan curves with finite intersection points.  %, in a slight abuse of notation, we denote $f|_{\sigma'}$ the restriction of $f$ on $\Sigma'$ or the zero extension of $f|_{\sigma'}$ on $\tilde\Sigma$, then we learn that
%\begin{align}\label{e3.28}
%	\cc^{\tilde\Sigma}f=\int_{\tilde\Sigma}\frac{f(s)}{
	%	\zeta-\cdot}\frac{\ddddd\zeta}{2\pi\ii}=\cc^{\tilde\Sigma_1}(f|_{\tilde\Sigma_1})+\cc^{\tilde\Sigma_2}(f|_{\tilde\Sigma_2})+\cc^{\Sigma}(f|_{\Sigma}).
%\end{align}
Denote $\cc_\pm^{\Gamma}g$ as the boundary values of $\cc^{\Gamma}g$.
It is well known that $\cc_\pm^{\Gamma}$ are bounded operators on $L^2(\Gamma)$.
Taking $\Gamma=\tilde\Sigma=\Sigma\cup\tilde\Sigma_1\cup\tilde\Sigma_2$ and $g\in L^2(\tilde\Sigma)$,
we define the bounded operator:
\begin{align}\label{e3.31}
	T^{\tilde\Sigma}f=\cc_-^{\tilde\Sigma}(\cc_+^{\tilde\Sigma}(fw)w_+)+\cc_+^{\tilde\Sigma}(\cc_-^{\tilde\Sigma}(fw)w_-),
\end{align}
where $w=w_++w_-$. By rationally approximating $w_+$, $w_-$ in the 1st, 2nd term on the right-hand side of \eqref{e3.31}, respectively, we obtain that both $\cc_-^{\tilde\Sigma}(\cc_+^{\tilde\Sigma}(\cdot w)w_+)$ and $\cc_+^{\tilde\Sigma}(\cc_-^{\tilde\Sigma}(\cdot w)w_-)$ are compact operators on $L^2(\tilde\Sigma)$.
Exactly, we simply rewrite \eqref{e3.31} as
\begin{align}
	T^{\tilde\Sigma}=(\cc_w)^2,
\end{align}
where
\begin{equation}\label{e3.22}
	\cc_wf=\cc_+^{\tilde\Sigma}(fw_-)+\cc_-^{\tilde\Sigma}(fw_+),
\end{equation}
and then by the compactness of $T^{\tilde\Sigma}$, $\identity-T^{\tilde\Sigma}=(\identity+\cc_w)(\identity-\cc_w)$ is Fredholm.
In addition, referring to Corollary \ref{c3.1} that the equation $\cc_w\mu=\mu$ admits no non-trivial solution, the Fredholm alternative is zero, and therefore, we obtain that both $\identity-T^{\tilde\Sigma}$ and $\identity-\cc_w$ are invertible with the property
\begin{align}
	(\identity-\cc_w)^{-1}=(\identity-T^{\tilde\Sigma})^{-1}(\identity+\cc_w).
\end{align}
Thus, RH problem \ref{r2.7} is uniquely solvable and the solution of RH problem \ref{r2.7} is
\begin{equation}
	\tilde M(\lambda)=I+\oint_{\tilde\Sigma}\frac{[(\identity-\cc_w)^{-1}Iw](\varsigma)}{\varsigma-\lambda}\frac{\ddddd \varsigma}{2\pi\ii},
\end{equation}
which is also well known as the Beals-Coifman solution.

\begin{figure}
	\centering
	\includegraphics[width=0.3\textwidth]{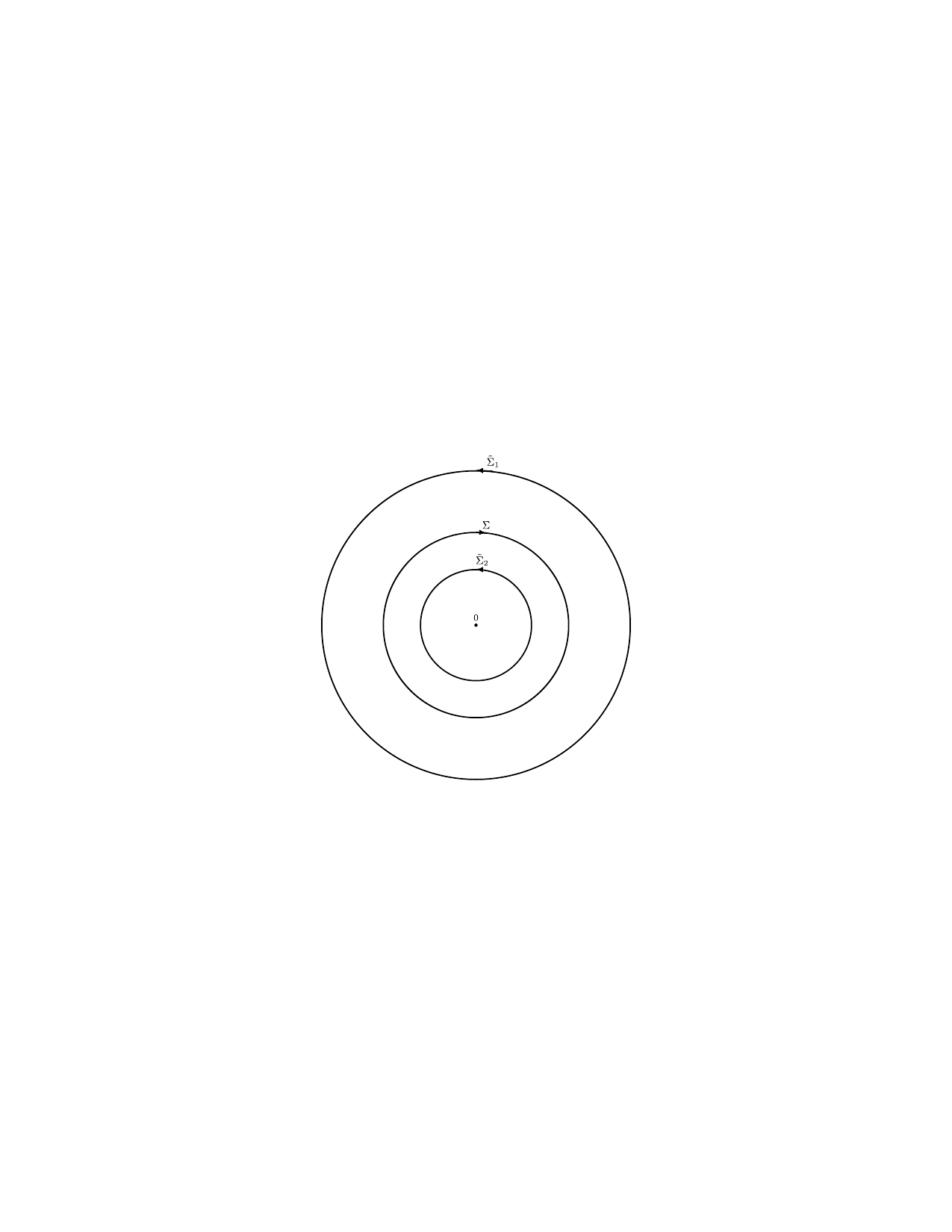}
	\caption{The jump contour $\tilde\Sigma$ consists of three circles centering at the origin, which is for RH problem \ref{r2.7}}\label{f1}
\end{figure}

\begin{lemma}[Vanishing Lemma]\label{l3.7}
	Set $\Gamma\subset\mathbb{C}$ as a curve consisting of finite Jordan curves and the unit circle with at most finite intersections, and assume that the union of the curves is invariant under the mapping $\lambda\mapsto\bar\lambda^{-1}$.
	Suppose a $2\times2$ matrix-valued function $V^{\Gamma}\in C^0(\Gamma)$  satisfies
	\begin{enumerate}
		\item $V^{\Gamma}(\lambda)$ admits the symmetric property on $\Gamma\setminus\Sigma$:
		\begin{align}\label{e3.32}
			V^{\Gamma}(\bar\lambda^{-1})=(V^{\Gamma}(\lambda))^\dagger.
		\end{align}
		\item For $\lambda\in\Sigma$,
		\begin{align}\label{e4.41}
			\re\ V^{\Gamma}(\lambda)>\mathbf{0}.
		\end{align}
		\item $V^{\Gamma}(\lambda)$ admits the factorization for $\lambda\in\Gamma$
		\begin{align}
			V^{\Gamma}(\lambda)=(I-w^{\Gamma}_-(\lambda))^{-1}(I+w^{\Gamma}_+(\lambda)),
		\end{align}
		where $w^\Gamma_\pm(\lambda)$ are some continuous functions.
	\end{enumerate}
	Define the Cauchy-type integral operator for $f\in L^2(\Gamma)$:
	\begin{align}
		\cc_{w^{\Gamma}}f=\cc^{\Gamma}_+(fw^{\Gamma}_-)+\cc^{\Gamma}_-(fw^{\Gamma}_+),
	\end{align}
	where
	\begin{align}
		w^{\Gamma}(\lambda)=w^{\Gamma}_+(\lambda)+w^{\Gamma}_-(\lambda),
	\end{align}
	then, we claim that there is no non-trivial solution $\mu\in L^2(\Gamma)$ for
	\begin{align}
		\cc_{w^\Gamma}\mu=\mu.
	\end{align}
	
\end{lemma}

\begin{proof}
	Suppose $\mu\in L^2(\Gamma)$ is a non-trivial solution.
	It is readily seen that $\cc^{\Gamma}(\mu w^\Gamma)(\lambda)$ is analytic outside $\Gamma$ and satisfies
	\begin{itemize}
		\item For $\lambda\in\Gamma$,
		\begin{align}\label{e3.55}
			\cc^{\Gamma}_+(\mu w^\Gamma)(\lambda)=\cc^{\Gamma}_-(\lambda)(\mu w^\Gamma)V^{\Gamma}(\lambda).
		\end{align}
		\item As $\lambda\to\infty$,
		\begin{align}\label{e3.56}
			\cc^{\Gamma}(\mu w^\Gamma)(\lambda)=\oo(\lambda^{-1}).
		\end{align}
	\end{itemize}
	The holomorphism is the direct result of $\mu, w^\Gamma\in L^2(\Gamma)$.
	For \eqref{e3.55}, it is well-known that when $\Gamma$ consists of finite Jordan curves,  $\cc^{\Gamma}_\pm$ are bounded on $L^2(\Gamma)$ and
	\begin{align}
		\cc_+^\Gamma-\cc_-^\Gamma=\identity,
	\end{align}
	therefore,
	\begin{subequations}
		\begin{align}
			&\cc^{\Gamma}_+(\mu w^\Gamma)=\cc_{w^\Gamma}\mu+(\cc^\Gamma_+-\cc^\Gamma_-)(\mu w^\Gamma_+)=\mu(I+w^{\Gamma}_+),\\
			&\cc^{\Gamma}_-(\mu w^\Gamma)=\cc_{w^\Gamma}\mu+(\cc^\Gamma_--\cc^\Gamma_+)(\mu w^\Gamma_-)=\mu(I-w^{\Gamma}_-),
		\end{align}
	\end{subequations}
	which is equivalent to \eqref{e3.55}.
	\eqref{e3.56} is the direct consequence of the compactness of $\Gamma$.
	
	Now, we investigate the analytic function on $\mathbb{C}\setminus\Gamma$:
	\begin{align}\label{e3.36}
		\Phi=\cc^{\Gamma}(\mu w^{\Gamma}).
	\end{align}
	By the above argument, it follows that $\Psi=\Psi(\lambda)=\Phi(\bar\lambda^{-1})^\dagger$ is holomorhpic on $\mathbb{C}\setminus\Gamma$ and satisfies that
	\begin{itemize}
		\item For $\lambda\in\Gamma$,
		\begin{align}\label{e3.59}
			\Psi_+(\lambda)=(V^\Gamma(\bar\lambda^{-1})^\dagger)^{-1}\Psi_-(\lambda).
		\end{align}
		\item As $\lambda\to\infty$,
		\begin{align}\label{e3.48}
			\Psi(\lambda)=\oo(1),
		\end{align}
	\end{itemize}
	Thus, by \eqref{e3.32}, \eqref{e3.55}, and \eqref{e3.59}, it follows that $\Phi\Psi$ admits no jump over $\Gamma\setminus\Sigma$, that is, for  $\lambda\in\Gamma\setminus\Sigma$,
	\begin{align}\label{e3.50}
		\Phi_+(\lambda)\Psi_+(\lambda)=\Phi_-(\lambda)V^{\Gamma}(\bar\lambda^{-1})^\dagger\Psi_+(\lambda)=\Phi_-(\lambda)\Psi_-(\lambda).
	\end{align}
	Recalling the asymptotic property \eqref{e3.56} of $\Phi(\lambda)$, we have that at $\lambda=0$,
	\begin{align}
		\Psi(\lambda)=\oo(\lambda),
	\end{align}
	therefore, $\Phi(\lambda)\Psi(\lambda)\lambda^{-1}$ is holomorphic on the unit disc $D_-$, and by Cauchy's Integral Theorem, the integral
	\begin{equation}\label{e3.51}
		\begin{split}
			\mathbf{0}&=\oint_\Sigma\Phi_-(\lambda)\Psi_-(\lambda)\lambda^{-1}\frac{\ddddd\lambda}{2\pi\ii}=\oint_\Sigma[\Phi_-(V^{\Gamma})^\dagger\Phi_-^\dagger](\lambda)\lambda^{-1}\frac{\ddddd\lambda}{2\pi\ii}=\int_0^{2\pi}[\Phi_-(V^{\Gamma})^\dagger\Phi_-^\dagger](e^{\ii\theta})\frac{\ddddd\theta}{2\pi}.
		\end{split}
	\end{equation}
	However, recalling \eqref{e4.41}, we obtain that the right-hand side of \eqref{e3.51} vanishes if and only if $\Phi(\lambda)$ tends to zero almost everywhere on the boundary of $D_-$, which in view of \eqref{e3.36} is equivalent to that $\mu(\lambda)$ is vanishing on the support of $w^{\Gamma}(\lambda)$.
	This controverts to the assumption.
	These prove the results.
\end{proof}

\begin{corollary}\label{c3.1}
	Set the Cauchy-type integral operator $\cc_w$ as defined in \eqref{e3.22}.
	There is no non-trivial solution in $\mu\in L^2(\tilde\Sigma)$ such that
	\begin{align}
		\cc_w\mu(\lambda)=\mu(\lambda),\quad \text{a.e. }\lambda\in\tilde\Sigma.
	\end{align}
	
\end{corollary}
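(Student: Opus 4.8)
The plan is to recognize that Corollary \ref{c3.1} is precisely the specialization of the Vanishing Lemma (Lemma \ref{l3.7}) to the concrete contour $\Gamma=\tilde\Sigma=\Sigma\cup\tilde\Sigma_1\cup\tilde\Sigma_2$ and jump matrix $V^\Gamma=\tilde V$ of RH problem \ref{r2.7}, with the factorization data $w^\Gamma_\pm=w_\pm$ supplied by \eqref{e3.24}--\eqref{e3.26}. The operator $C_w$ of \eqref{e3.22} coincides term-by-term with the lemma's $C_{w^\Gamma}$ under the identification $w^\Gamma_\pm=w_\pm$, so the entire task reduces to checking that $\tilde\Sigma$ and $\tilde V$ satisfy the three hypotheses of Lemma \ref{l3.7}. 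Once these are verified, the conclusion $C_w\mu=\mu\Rightarrow\mu=0$ is immediate.

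First I would record the contour geometry: $\tilde\Sigma$ is a union of three circles centered at the origin with $\tilde\Sigma_2=\overline{(\tilde\Sigma_1)^{-1}}$ and $\Sigma$ the unit circle, hence $\tilde\Sigma$ is invariant under $\lambda\mapsto\bar\lambda^{-1}$ and the circles are mutually disjoint, so the finite-intersection hypothesis holds trivially. The two algebraic inputs I would isolate are the conjugation symmetry of the phase, $\phi(\bar\lambda^{-1})=-\overline{\phi(\lambda)}$, which follows directly from \eqref{e2.18} and the branch of $\ln\lambda$, together with the identity $\breve f(\lambda)=-\overline{f(\bar\lambda^{-1})}$ established in Proposition \ref{p4}.

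To verify condition (1) I would take $\lambda\in\tilde\Sigma_1$, so that $\bar\lambda^{-1}\in\tilde\Sigma_2$, and evaluate $\tilde V(\bar\lambda^{-1})$ on the $\tilde\Sigma_2$-branch; using $\breve f(\bar\lambda^{-1})=-\overline{f(\lambda)}$ and $e^{-\phi(\bar\lambda^{-1})}=e^{\overline{\phi(\lambda)}}$, the off-diagonal entry becomes $\overline{f(\lambda)}e^{\overline{\phi(\lambda)}}$, which is exactly the $(1,2)$ entry of the Hermitian conjugate of the $\tilde\Sigma_1$-branch, giving $\tilde V(\bar\lambda^{-1})=\tilde V(\lambda)^\dagger$. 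For condition (2), on $\Sigma$ the phase $\phi$ is purely imaginary (each of $-\ii t(\lambda+\lambda^{-1}-2)$ and $n\ln\lambda$ is imaginary when $\abs{\lambda}=1$), so $\overline{e^{\pm\phi}}=e^{\mp\phi}$; this makes $\tilde V$ Hermitian on $\Sigma$ with $\re\ \tilde V=\tilde V$, and the computation $\det\tilde V=(1+\abs{f+r}^2)-\abs{f+r}^2=1>0$ together with the positive $(1,1)$-entry $1+\abs{f+r}^2\ge1$ yields positive-definiteness by Sylvester's criterion. Condition (3) is \eqref{e3.24} itself, with $w_\pm$ continuous on $\tilde\Sigma$ because $r\in H^1$ is $\frac{1}{2}$-H\"older continuous by Proposition \ref{p2.2}, $f,\breve f$ are rational with no poles on the three circles, and $e^{\pm\phi}$ are continuous there.

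The only genuine subtlety I anticipate is the bookkeeping in condition (1): one must track the involution $\lambda\mapsto\bar\lambda^{-1}$ through both $\breve f(\lambda)=-\overline{f(\bar\lambda^{-1})}$ and the branch of $\ln\lambda$ in $\phi$, and confirm that the two sign flips (one from $\breve f$, one from the $-\breve f e^{-\phi}$ entry of the $\tilde\Sigma_2$-branch) cancel to reproduce $\tilde V^\dagger$ rather than $-\tilde V^\dagger$. Everything else is a direct substitution into Lemma \ref{l3.7}, so no independent estimate or new argument is required beyond invoking that lemma with $V^\Gamma=\tilde V$ and $w^\Gamma_\pm=w_\pm$.
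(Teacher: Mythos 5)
Your proposal is correct and follows essentially the same route as the paper: both proofs verify the three hypotheses of Lemma \ref{l3.7} for $\Gamma=\tilde\Sigma$, $V^\Gamma=\tilde V$, $w^\Gamma_\pm=w_\pm$ — the factorization \eqref{e3.24}, the H\"older continuity of $w_\pm$ coming from $r\in H^1$ and the rationality of $f,\breve f$, the symmetry $\tilde V(\bar\lambda^{-1})=\tilde V(\lambda)^\dagger$ on $\tilde\Sigma_1\cup\tilde\Sigma_2$ via $\breve f(\lambda)=-\overline{f(\bar\lambda^{-1})}$, and positive-definiteness of the self-adjoint jump on $\Sigma$ — and then invoke the Vanishing Lemma. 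Your write-up merely makes explicit the computations (the phase symmetry $\phi(\bar\lambda^{-1})=-\overline{\phi(\lambda)}$, the cancellation of the two sign flips, and the Sylvester-criterion check $\det\tilde V|_\Sigma=1$) that the paper leaves implicit.
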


\begin{proof}
	\eqref{e3.25} shows that $\tilde V(\lambda)$ admits a proper factorization.
	And since $r\in H^1$ and $f(\lambda)$ is a rational function, by \eqref{e3.25} and \eqref{e3.26}, $w_\pm(\lambda)$ are $\frac{1}{2}$-H\"older continuous on $\lambda\in\tilde\Sigma$.
	By the choice of $f(\lambda)$ and $\breve{f}(\lambda)$ in Proposition \ref{p4}, $\tilde V(\lambda)$ satisfies that on $\tilde\Sigma_1\cup\tilde\Sigma_2$,
	\begin{align}
		\tilde V(\bar\lambda^{-1})=\tilde V(\lambda)^\dagger.
	\end{align}
	Since $\tilde V(\lambda)$ is self-adjoint on $\lambda\in\Sigma$, it follows that
	\begin{align}
		\re\ \tilde V(\lambda)>\mathbf{0}.
	\end{align}
	Thus, the results is the consequence of Lemma \ref{l3.7}.
	We have proved the results.
\end{proof}

\begin{remark}\label{r3.7}
	We have proved the solvability of RH problem \ref{r2.7}, and it is readily seen that the solution has the following symmetric property that for $\lambda\in\mathbb{C}\setminus\tilde\Sigma$
	\begin{align}\label{e4.60}
		\tilde M(0)^{-1}\tilde M(\bar\lambda^{-1})=\sigma_2\overline{\tilde M(\lambda)}\sigma_2.
	\end{align}
	And by \eqref{e2.46} and \eqref{e3.23}, the solution of the initial-value problem is written by $\tilde M(\lambda)$ in the form,
	\begin{equation}
		q_n(t)=[\tilde M(0,n+1,t)]_{1,2}.
	\end{equation}
	Especially, under the reflectionless condition, that is, $r\equiv0$, the RH problem is still uniquely solved, which is equivalent to the solvability of RH problem \ref{r1} with $r\equiv0$.
	We denote the solution only with the discrete spectral data $(\mathcal{Z},\pp)$ as
	$M^{(\mathcal{Z},\pp)}(\lambda)=M^{(\mathcal{Z},\pp)}(\lambda,n,t)$.
	In this vein, it is readily seen the identity from \eqref{e4.60} that for $\lambda\in\Sigma$,
	\begin{align}
		M^{(\mathcal{Z},\pp)}(0)^{-1}M^{(\mathcal{Z},\pp)}(\lambda)=\sigma_2\overline{M^{(\mathcal{Z},\pp)}(\lambda)}\sigma_2.
	\end{align}
	Denote the soliton solution as
	\begin{align}
		q_n^{\mathcal{Z}}(t)=[M^{(\mathcal{Z},\pp)}(0,n+1,t)]_{1,2}.
	\end{align}
	
	It is readily seen that $M^{(\mathcal{Z},\pp)}(\lambda)$ is unique.
	In fact, we assume that there is another solution $M^{(\mathcal{Z},\pp)'}(\lambda)$ solves RH problem \ref{r1} for $r\equiv0$, then we have that by Proposition \ref{p4}, $M^{(\mathcal{Z},\pp)}(\lambda)(M^{(\mathcal{Z},\pp)'})^{-1}(\lambda)$ is entire function, and by Liouville's Theorem, $M^{(\mathcal{Z},\pp)}(M^{(\mathcal{Z},\pp)'})^{-1}\equiv I$ on the whole plane, that is, \begin{align}
		M^{(\mathcal{Z},\pp)}\equiv M^{(\mathcal{Z},\pp)'}.
	\end{align}
	
\end{remark}

\section{Soliton resolution  in sector II}\label{s5}
Previously, we have presented the direct scattering transformation and constructed the RH problem for the initial-value problem that is uniquely solvable.
The aim of this section is to carry out the analysis in sector II.
In this vein, the phase function admits two different saddle points on the unit circle: $S_1$ and $S_2$,
and the Taylor's expansion of $\phi(\lambda)$ at $S_j$ is
\begin{equation}\label{e5.2}
	\begin{split}
		\phi(\lambda)-\phi(S_j)=&(-1)^{j-1}\ii t\sqrt{1-\xi^2}S_j^{-2}(\lambda-S_j)^2+\oo(\abs{\lambda-S_j}^3)
	\end{split},\quad
	\lambda\to S_j,\quad j=1,2,
\end{equation}
where 
\begin{equation}
	S_1=-\ii\xi-\sqrt{1-\xi^2},\quad S_2=-\ii\xi+\sqrt{1-\xi^2}.\label{S1S2}
\end{equation}
Due to this fact, in the asymptotic formula, the leading terms contain soliton and oscillatory parts, and the remaining is dominated by $\oo(t^{-\frac{3}{4}})$.
\begin{figure}
	\centering\includegraphics[width=0.5\textwidth]{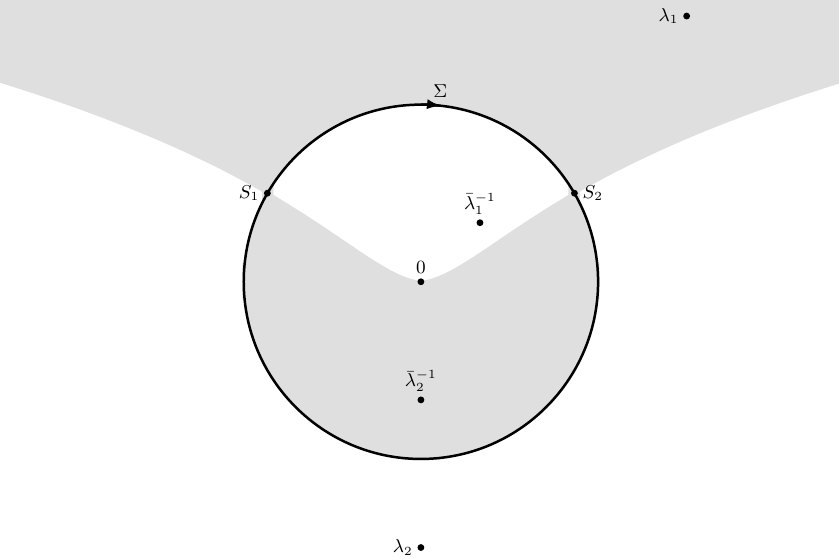}
	\caption{The signature table for $\mathrm{Re}\phi$, where $\Sigma$ is the jump contour, $\mathrm{Re}\phi>0$ on the grayed region, and $\mathrm{Re}\phi<0$ otherwise.}\label{f2}
\end{figure}

At the beginning, we introduce the scalar function that admits a jump on the lower-half unit arc $\wideparen{S_1S_2}=\{e^{\ii\theta}:\theta\in(\arg S_1,3\pi-\arg S_1)\}$,%upper 换成lower
\begin{align}\label{e3.1}
	T(\lambda)=T(\lambda,\xi)=e^{-\int_{S_2}^{S_1}\frac{\ln(1+\abs{r(\varsigma)}^2)}{\varsigma-\lambda}\frac{\ddddd \varsigma}{2\pi\ii}}\prod_{\lambda_l\in \mathcal{Z}_\xi^-}\left(\frac{\lambda-\lambda_l}{\lambda-\bar\lambda_l^{-1}}\right)^{\alpha_l},\quad \xi\in(-1,1),
\end{align}
where $\mathcal{Z}_\xi^-=\mathcal{Z}\cap\{\mathrm{Re}\phi<0\}$ and the integral is along the unit arc from $S_2$ to $S_1$.
By this scalar function, the aim is to construct a new RH problem, the solution is $M^{(1)}(\lambda)=M(\lambda)T^{\sigma_3}(\lambda)$, with spectral data satisfying that the jump matrix admits a proper factorization and the exponential functions $e^\phi$, $e^{-\phi}$ in the residue condition do not blow up.
We assert that $T(\lambda)$ admits properties shown in the following proposition.
\begin{proposition}\label{p4.1}
	\eqref{e3.1} satisfies the following properties:
	\begin{enumerate}[label=(\alph*)]
		\item $T(\lambda)$ is a meromorphic function for $\lambda\in\mathbb{C}\setminus\wideparen{S_1S_2}$, and admits $\alpha_j$-order pole at $\lambda=\lambda_l\in\mathcal{Z}$ and at $\lambda=\bar\lambda_l^{-1}\in\overline{\mathcal{Z}^{-1}}$.
		\item For $\lambda\in\mathbb{C}\setminus(\wideparen{S_1S_2}\cup\mathcal{Z}\cup\overline{\mathcal{Z}^{-1}})$, $T(\lambda)$ admits the symmetric property
		\begin{align}
			T(\bar\lambda^{-1})=T(0)\overline{T^{-1}(\lambda)}.
		\end{align}
		\item For $\lambda\in\wideparen{S_1S_2}$, the boundary values $T_\pm(\lambda)$ admit the identity
		\begin{align}\label{e5.9}
			T_+(\lambda)=T_-(\lambda)(1+\abs{r(\lambda)}^2)^{-1}.
		\end{align}
		\item As $\lambda\to\infty$, we have
		\begin{align}
			T(\lambda)\sim 1+\oo(\lambda^{-1}).
		\end{align}
		\item Near the stationary phase points $S_j$, $T(\lambda)$ admits the asymptotic properties near $\lambda=S_j,j=1,2$,
		\begin{align}\label{e3.5}
			T(\lambda)=\prod_{\lambda_l\in \mathcal{Z}_\xi^-}\left(\frac{S_j-\lambda_l}{S_j-\bar\lambda_l^{-1}}\right)^{\alpha_l}\left(\frac{\lambda-S_2}{\lambda-S_1}\right)^{\ii\nu_j}e^{\tilde\alpha_j(S_j)}+\oo(\abs{\lambda-S_j}^{\frac{1}{2}}).
		\end{align}
	\end{enumerate}
\end{proposition}
\begin{proof}
	(a) is directly verified by \eqref{e3.1}.
	(b) is the results from computing $\overline{T(\lambda)}T(\bar\lambda^{-1})$ for $\lambda\in\mathbb{C}\setminus(\wideparen{S_1S_2}\cup\mathcal{Z}\cup\overline{\mathcal{Z}^{-1}})$.
	For (c), we take the natural logarithm on both sides of \eqref{e5.9}, and it is equivalent to
	\begin{align}
		\int_{S_2}^{S_1}\frac{\ln(1+\abs{r(\varsigma)}^2)}{\varsigma-\lambda_+}\frac{\ddddd  \varsigma}{2\pi\ii}-\int_{S_2}^{S_1}\frac{\ln(1+\abs{r(\varsigma)}^2)}{\varsigma-\lambda_-}\frac{\ddddd  \varsigma}{2\pi\ii}=\ln(1+\abs{r(\lambda)}^2),\quad \lambda\in\wideparen{S_1S_2},
	\end{align}
	which is correct by Sokhotski-Plemelj formula.
	Since $r\in H^1$, (d) is the direct consequence of the definition \eqref{e3.1}.
	
	The remaining is (e).
	To check (e), we refer to a conformal mapping \eqref{e5.10} that transforms the unit circle to the real line.
	Take
	\begin{equation}
		\delta(\lambda)=\delta(\lambda,\xi)=T(\lambda)\prod_{\lambda_l\in \mathcal{Z}_\xi^-}\left(\frac{\lambda-\lambda_l}{\lambda-\bar\lambda_l^{-1}}\right)^{-\alpha_l}=e^{-\int_{S_2}^{S_1}\frac{\ln(1+\abs{r(\varsigma)}^2)}{\varsigma-\lambda}\frac{\ddddd \varsigma}{2\pi\ii}}.
	\end{equation}
	With the conformal mapping: $\lambda\mapsto\lambda'$, $\varsigma\mapsto \varsigma'$,
	\begin{align}\label{e5.10}
		\lambda=\sqrt{S_1S_2}\frac{\sqrt{S_1}+\sqrt{S_2}\lambda'}{\sqrt{S_2}+\sqrt{S_1}\lambda'},\quad \varsigma=\sqrt{S_1S_2}\frac{\sqrt{S_1}+\sqrt{S_2}\varsigma'}{\sqrt{S_2}+\sqrt{S_1}\varsigma'},
	\end{align}
	we obtain that
	\begin{align}\label{e17}
		\int_{S_2}^{S_1}\frac{f(\varsigma)-f(S_1)}{(2\pi\ii)(\lambda-\varsigma)}\mathrm{d}\varsigma=\frac{\sqrt{S_2}+\sqrt{S_1}\lambda'}{2\pi\ii}\int_{-\infty}^{0}\frac{f(\varsigma)-f(S_1)}{(\sqrt{S_2}+\sqrt{S_1}\varsigma')(\varsigma'-\lambda')}\ddddd \varsigma',
	\end{align}
	where $f(\varsigma)=\ln(1+\lvert r(\varsigma)\rvert^2)$.
	Since $r(\varsigma)\in H^1\subset L^\infty(\Sigma)$, it follows that
	\begin{align}
		F(\varsigma')=\begin{cases}
			\frac{f(\varsigma(\varsigma'))-f(S_1)}{\sqrt{S_2}+\sqrt{S_1}\varsigma'}&\varsigma'\le0,\\
			0&\varsigma'>0,
		\end{cases}
	\end{align}
	belongs to $H^1(\mathbb{R})$ and $F(\varsigma'=0)=0$;
	therefore, we apply Lemma 23.3 of \cite{beals1988direct} on $F(\varsigma')\in H^1(\mathbb{R})$, and get that for any $\lambda\in\mathbb{C}\setminus\mathbb{R}$,
	\begin{align}
		&\lvert \int_{-\infty}^{+\infty}\frac{F(\varsigma')}{(2\pi\ii)(\varsigma'-\lambda')}\mathrm{d}s'-\int_{-\infty}^{+\infty}\frac{F(\varsigma')}{2\pi\ii \varsigma'}\mathrm{d}\varsigma'\rvert\lesssim\parallel F\parallel_{H^1}\lvert \lambda'\rvert^{\frac{1}{2}},
	\end{align}
	that is,
	\begin{align}\label{e3.12}
		\delta(\lambda)\sim\left(\frac{\lambda-S_2}{\lambda-S_1}\right)^{\ii\nu_1}e^{\tilde\alpha_1(S_1)}+\oo(\abs{\lambda-S_j}^{\frac{1}{2}}),
	\end{align}
	which is an equivalence of \eqref{e3.5} for $j=1$, while the proof for $j=2$ is parallel.
	These complete the proof.
\end{proof}
\subsection{Transform to a $\bar\partial$-RH problem}\label{s1m1}
As it usually does, we transform $M(\lambda)$ into
\begin{align}\label{e4.13}
	M^{(1)}(\lambda)=M(\lambda)T^{\sigma_3}(\lambda).
\end{align}

\begin{rhp}\label{r4.2}
	\
	\begin{itemize}
		\item $M^{(1)}(\lambda)$ is meromorphic in $\mathbb{C}\setminus\Sigma$.
		\item As $\lambda\to\infty$,
			$M^{(1)}(\lambda)=I+\oo(\lambda^{-1})$
		\item For $\lambda\in\Sigma$, we have $M^{(1)}_+(\lambda)=M^{(1)}_-(\lambda)V^{(1)}(\lambda)$, where
		\begin{align*}
			V^{(1)}(\lambda)=\begin{cases}
				\left(\begin{matrix}
					1+\abs{r(\lambda)}^2&\overline{r}(\lambda)T^{-2}(\lambda)e^{\phi(\lambda)}\\r(\lambda)T^2(\lambda)e^{-\phi(\lambda)}&1
				\end{matrix}\right)&\lambda\in\Sigma-\wideparen{S_1S_2},\\
				\left(\begin{matrix}
					1&\frac{\bar r(\lambda)}{1+\abs{r(\lambda)}^2}T_+^{-2}(\lambda)e^{\phi(\lambda)}\\\frac{r(\lambda)}{1+\abs{r(\lambda)}^2}T_-^2(\lambda)e^{-\phi(\lambda)}&1+\abs{r(\lambda)}^2
				\end{matrix}\right)&\lambda\in\wideparen{S_1S_2}.
			\end{cases}
		\end{align*}
		\item On the poles $\mathcal{Z}\cup\overline{\mathcal{Z}^{-1}}$, we have
		\begin{subequations}\label{e4.17s}
			\begin{align}
				&\underset{\lambda=\lambda_{l}}{\LL_{-\alpha}}M^{(1)}=\begin{cases}
					\sum_{j=0}^{\alpha_{l}-\alpha}\frac{\partial_\lambda^{\alpha_{l}-\alpha-j}M^{(1)}(\lambda_{l})}{(\alpha_{l}-\alpha-j)!}\left(\begin{matrix}
						0&p^T_{\lambda_{l},j}e^{\phi(\lambda_{l})}\\0&0
					\end{matrix}\right) &\lambda_j\in \mathcal{Z}_\xi^-,\\
					\sum_{j=0}^{\alpha_{l}-\alpha}\frac{\partial_\lambda^{\alpha_{l}-\alpha-j}M^{(1)}(\lambda_{l})}{(\alpha_{l}-\alpha-j)!}\left(\begin{matrix}
						0&0\\p^T_{\lambda_{l},j}e^{-\phi(\lambda_{l})}&0
					\end{matrix}\right) &\lambda_j\in \mathcal{Z}\setminus\mathcal{Z}_\xi^-,
				\end{cases}\label{e4.17a}\\
				&\underset{\lambda=\bar\lambda_{l}^{-1}}{\LL_{-\alpha}}M^{(1)}=\begin{cases}
					\sum_{j=0}^{\alpha_{l}-\alpha}\frac{\partial_\lambda^{\alpha_{l}-\alpha-j}M^{(1)}(\bar\lambda_{l}^{-1})}{(\alpha_{l}-\alpha-j)!}\left(\begin{matrix}
						0&0\\p^T_{\bar\lambda_{l}^{-1},j}e^{-\phi(\bar\lambda_{l}^{-1})}&0
					\end{matrix}\right)&\lambda_j\in\mathcal{Z_\xi^+},\\
					\sum_{j=0}^{\alpha_{l}-\alpha}\frac{\partial_\lambda^{\alpha_{l}-\alpha-j}M^{(1)}(\bar\lambda_{l}^{-1})}{(\alpha_{l}-\alpha-j)!}\left(\begin{matrix}
						0&p^T_{\bar\lambda_{l}^{-1},j}e^{\phi(\bar\lambda_{l}^{-1})}\\0&0
					\end{matrix}\right)&\lambda_j\in\mathcal{Z}\setminus\mathcal{Z_\xi^+},
				\end{cases}\label{e4.17b}
			\end{align}
			where $p^T_{\lambda_{l},j}$ and $p^T_{\bar\lambda_{l}^{-1},j}$ are some polynomials of $(n,t)$ of order at most $j$.
		\end{subequations}
		
	\end{itemize}
\end{rhp}
Using proposition \ref{p4.1}, we assert that $M^{(1)}(\lambda)$ solves the RH problem above.
That \eqref{e4.13} satisfies the analyticity, normalization, and jump condition in RH problem \ref{r4.2} is easily verified by RH problem \ref{r1} and the first three items in Proposition \ref{p4.1}.
The remaining to check is the residue condition on $\mathcal{Z}$.
On the one hand for $\lambda_{l}\in\mathcal{Z}\setminus\mathcal{Z}_\xi^-$,
since $T(\lambda)$ is analytic at $\lambda_{l}$, we calculate the quantity
\begin{equation}\label{e4.15}
	\begin{split}
		\underset{\lambda=\lambda_{l}}{\LL_{-\alpha}}M^{(1)}_1&=\frac{\partial_\lambda^{\alpha_{l}-\alpha}((\cdot-\lambda_{l})^{\alpha_{l}}M_1T)(\lambda_{l})}{(\alpha_{l}-\alpha)!}\\
		&=\sum_{j=0}^{\alpha_{l}-\alpha}\frac{\partial_\lambda^{\alpha_{l}-\alpha-j}((\cdot-\lambda_{l})^{\alpha_{l}}M_1)(\lambda_{l})}{(\alpha_{l}-\alpha-j)!}\frac{\partial_\lambda^jT(\lambda_{l})}{j!}
		=\sum_{j=0}^{\alpha_{l}-\alpha}\frac{\partial_\lambda^jT(\lambda_{l})}{j!}\underset{\lambda=\lambda_{l}}{\LL_{-\alpha-j}}M_1.
	\end{split}
\end{equation}
By \eqref{e4.13} and Leibniz rule, it follows that
\begin{align}\label{e4.16}
	\frac{\partial_\lambda^\alpha M_2(\lambda_{l})}{\alpha!}=\sum_{j=0}^\alpha\frac{\partial_\lambda^{\alpha-j} M_2^{(1)}(\lambda_{l})}{(\alpha-j)!}\frac{\partial_\lambda^{j}T(\lambda_{l})}{j!}.
\end{align}
Thus, considering \eqref{e16}, \eqref{e4.15}, and \eqref{e4.16}, we obtain that there are polynomials $p^T_{\lambda_{l},j}$ of $(n,t)$ of order at most $j$, $j=0,\dots,\alpha_{l}-1$, such that
\begin{align}\label{e4.11}
	\underset{\lambda=\lambda_{l}}{\LL_{-\alpha}}M^{(1)}_1=\sum_{j=0}^{\alpha_{l}-\alpha}\frac{\partial_\lambda^{\alpha_{l}-\alpha-j}M^{(1)}_2(\lambda_{l})}{(\alpha_{l}-\alpha-j)!}p^T_{\lambda_{l},j}e^{-\phi(\lambda_{l})},\quad\alpha=1,\dots,\alpha_{l}.
\end{align}
On the other hand for $\lambda_{l}\in\mathcal{Z}_\xi^-$, in view of the transformation \eqref{e4.13}, it is readily seen that at any $\lambda_{l}\in\mathcal{Z}_\xi^-$, the pole of $M^{(1)}$ is on the second column while the first column is holomorphic.
Calculate
\begin{equation}\label{e4.23}
	\begin{split}
		\underset{\lambda=\lambda_{l}}{\LL_{-\alpha}}M^{(1)}_2=\sum_{j=0}^{\alpha_{l}-\alpha}\underset{\lambda=\lambda_{l}}{\LL_{-\alpha-j}}T^{-1}\frac{\partial_\lambda^jM_2(\lambda_{l})}{j!}.
	\end{split}
\end{equation}
Utilizing Leibniz rule, we also have that for any natural number $\alpha<\alpha_{l}$,
\begin{equation}\label{e4.24}
	\begin{split}
		\frac{\partial_\lambda^\alpha M^{(1)}_1(\lambda_{l})}{\alpha!}&=(\alpha!)^{-1}\partial_\lambda^\alpha \left((\cdot-\lambda_{l})^{\alpha_{l}}M_1\frac{T}{(\cdot-\lambda_{l})^{\alpha_{l}}}\right)(\lambda_{l})\\
		&=\sum_{j=0}^\alpha\frac{\partial_\lambda^{\alpha-j}[(\cdot-\lambda_{l})^{\alpha_{l}}M_1](\lambda_{l})}{(\alpha-j)!}\frac{\partial_\lambda^{j}[(\cdot-\lambda_{l})^{-\alpha_{l}}T]}{j!}
		=\sum_{j=0}^\alpha\underset{\lambda=\lambda_{l}}{\mathcal{L}_{\alpha-j-\alpha_{l}}}M_1\frac{\partial_\lambda^{j}[(\cdot-\lambda_{l})^{-\alpha_{l}}T]}{j!}.
	\end{split}
\end{equation}
In view of \eqref{e16}, \eqref{e4.23}, and \eqref{e4.24}, it is readily seen that there are polynomials $p^T_{\lambda_{l},j}$ of $(n,t)$ of order at most $j$ such that for any $\lambda_{l}\in\mathcal{Z}_\xi^-$,%这里似乎没错，但还是检查下
\begin{align}\label{e4.2}
	\underset{\lambda=\lambda_{l}}{\LL_{-\alpha}}M^{(1)}_2=\sum_{j=0}^{\alpha_{l}-\alpha}\frac{\partial_\lambda^{\alpha_{l}-\alpha-j}M^{(1)}_1(\bar\lambda_{l}^{-1})}{(\alpha_{l}-\alpha-j)!}p^T_{\lambda_{l},j}e^{\phi(\lambda_{l})},\quad\alpha=1,\dots,\alpha_{l}.
\end{align}
Combining \eqref{e4.11} and \eqref{e4.2}, we obtain the residue condition \eqref{e4.17a}, and we also verify \eqref{e4.17b} in the similar way.
Thus, we complete the assertion.

In the below, applying the $\bar\partial$-RH transformation, we split the jump contour $\Sigma$ along the arcs $\wideparen{S_1S_2}$ and $\Sigma\setminus\wideparen{S_1S_2}$ into $\Sigma^{(2)}$.
We properly choose the oriented closed contour $\Sigma^{(2)}$ as shown in FIGURE \ref{f3} that it consists of line segments $S_j(1+e^{\pm\ii\pi/4}(-\epsilon_0,\epsilon_0))$ and arcs centering at the origin.
With the unit circle $\Sigma$, it also divides the complex plane into six sections: $\Omega_1,\dots,\Omega_4$, and we choose $\epsilon_0$ so small enough that $\bigcup_{k=1}^4\Omega_k$ is away from the discrete spectrum as shown in FIGURE \ref{f3}.

\begin{figure}
	\centering\includegraphics[width=0.4\textwidth]{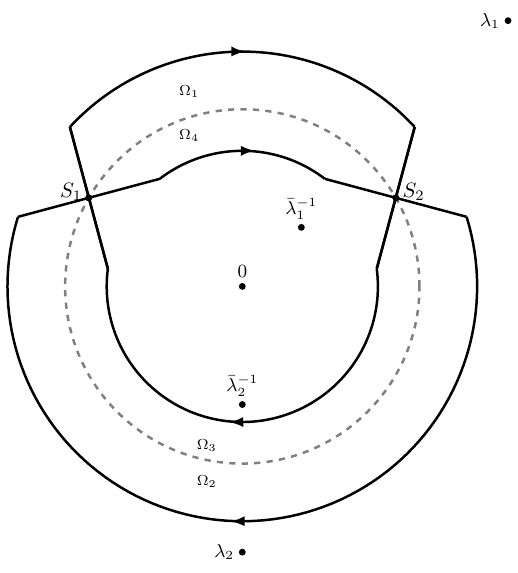}
	\caption{The jump contour $\Sigma^{(2)}=\Sigma^{(2)}_1\cup\Sigma^{(2)}_2\cup\Sigma^{(2)}_3\cup\Sigma^{(2)}_4$ and regions $\Omega_1,\dots,\Omega_4$, where $\Sigma^{(2)}_i=\partial\Omega_i\cap D_+$ for $i=1,2$ and $\Sigma^{(2)}_i=\partial\Omega_i\cap D_-$ for $i=3,4$.}\label{f3}
\end{figure}

Introduce the continuous functions on $\mathbb{C}\setminus\{0\}$
\begin{subequations}\label{e4.17}
	\begin{align}
		&R_1(\lambda)=r(\lambda/\abs{\lambda}),\quad R_4(\lambda)=\overline{R_1(\bar\lambda^{-1})},\\
		&R_2(\lambda)=\left(\frac{\bar r}{1+\abs{r}^2}\right)(\lambda/\abs{\lambda}),\quad R_3(\lambda)=\overline{R_2(\bar\lambda^{-1})},
	\end{align}
\end{subequations}
and the $2\times2$ matrix-valued function
\begin{align}\label{e4.25}
	M^{(2)}(\lambda)=M^{(1)}(\lambda)\mathcal{R}(\lambda),\quad \mathcal{R}(\lambda)=\begin{cases}
		\left(\begin{matrix}
			1&0\\-R_1(\lambda)T^2(\lambda)e^{-\phi(\lambda)}&1
		\end{matrix}\right)&\lambda\in\Omega_1,\\
		\left(\begin{matrix}
			1&-R_2(\lambda)T^{-2}(\lambda)e^{\phi(\lambda)}\\0&1
		\end{matrix}\right)&\lambda\in\Omega_2,\\
		\left(\begin{matrix}
			1&0\\R_3(\lambda)T^2(\lambda)e^{-\phi(\lambda)}&1
		\end{matrix}\right)&\lambda\in\Omega_3,\\
		\left(\begin{matrix}
			1&R_4(\lambda)T^{-2}(\lambda)e^{\phi(\lambda)}\\0&1
		\end{matrix}\right)&\lambda\in\Omega_4,\\
	I&otherwise.
	\end{cases}
\end{align}
Utilizing RH problem \ref{r4.2} and the transformation \eqref{e4.25}, ones easily check that $M^{(2)}(\lambda)$ uniquely solves the following $\bar\partial$-RH problem with the jump contour $\Sigma^{(2)}$ depicted in FIGURE \ref{f3}.
\begin{drhp}\label{db4.3}
	\
	\begin{itemize}
		\item $M^{(2)}(\lambda)$ is continuous in $\mathbb{C}\setminus\{\Sigma^{(2)}\cup\mathcal{Z}\cup\overline{\mathcal{Z}^{-1}}\}$.
		\item As $\lambda\to\infty$, we have $M(\lambda)=I+\oo(\lambda^{-1})$.
		\item On $\Sigma^{(2)}$, we have $M^{(2)}_+(\lambda)=M^{(2)}_-(\lambda)V^{(2)}(\lambda)$, where
		\begin{align}\label{e4.21}
			V^{(2)}(\lambda)=\begin{cases}
				\left(\begin{matrix}
					1&0\\R_1(\lambda)T^2(\lambda)e^{-\phi(\lambda)}&1
				\end{matrix}\right)&\lambda\in\Sigma^{(2)}_1=\partial\Omega_1\cap D_+,\\
				\left(\begin{matrix}
					1&R_2(\lambda)T^{-2}(\lambda)e^{\phi(\lambda)}\\0&1
				\end{matrix}\right)&\lambda\in\Sigma^{(2)}_2=\partial\Omega_2\cap D_+,\\
				\left(\begin{matrix}
					1&0\\R_3(\lambda)T^2(\lambda)e^{-\phi(\lambda)}&1
				\end{matrix}\right)&\lambda\in\Sigma^{(2)}_3=\partial\Omega_3\cap D_-,\\
				\left(\begin{matrix}
					1&R_4(\lambda)T^{-2}(\lambda)e^{\phi(\lambda)}\\0&1
				\end{matrix}\right)&\lambda\in\Sigma^{(2)}_4=\partial\Omega_4\cap D_-.
			\end{cases}
		\end{align}
		\item The residue condition is the same as that of RH problem \ref{r4.2} with $M^{(1)}(\lambda)$ replaced by $M^{(2)}(\lambda)$.
		\item For $\mathbb{C}\setminus\{\Sigma^{(2)}\cup\mathcal{Z}\cup\overline{\mathcal{Z}^{-1}}\}$, we have $\bar\partial M^{(2)}(\lambda)=M^{(2)}(\lambda)\bar\partial\mathcal{R}(\lambda)$.
	\end{itemize}
\end{drhp}

Rewrite $\lambda\in\mathbb{C}$ in the polar coordinates: $\lambda=\rho e^{\ii\theta}$.
Notice that for fixed $\theta$, the function $R_j(\rho e^{\ii\theta})$ are constant along the ray $\mathbb{R}^+e^{\ii\theta}$, thus utilizing the formula $\bar\partial=e^{i\theta}(\partial_\rho+\ii\rho^{-1}\partial_\theta)/2$, we calculate
\begin{subequations}\label{e4.3}
	\begin{align}
		\bar\partial R_1(\lambda)&=\frac{\ii}{2}\rho^{-1}e^{\ii\theta}\partial_\theta \left(r(e^{\ii\theta})\right)\label{e4.3a},\\
		\bar\partial R_2(\lambda)&=\frac{\ii}{2}\rho^{-1}e^{\ii\theta}\frac{\overline{\partial_\theta\left(r(e^{\ii\theta})\right)}-\overline{r(e^{\ii\theta})^2}\partial_\theta\left(r(e^{\ii\theta})\right)}{(1+\abs{r(e^{\ii\theta})}^2)^2}.
	\end{align}
\end{subequations}
Through \eqref{e4.3}, it is readily seen that the $L^1$-norm of $\bar\partial R_j$ on any circle $\{\lambda:\abs{\lambda}=C>0\}$ is well dominated by the norm of $r\in H^1$, which is applied to those estimates in Section \ref{s4.3.2}.

\subsection{Factorization of the $\bar\partial$-RH problem}
In this part, we decompose the solution for $\bar\partial$-RH problem into a product of solutions for an $RH$ problem and for a $\bar\partial$ problem, that is,
\begin{align}\label{e5.32}
	M^{(2)}(\lambda)=M^{(2)}_D(\lambda)M^{(2)}_{RH}(\lambda).
\end{align}

\begin{rhp}\label{r4.4}
	$M^{(2)}_{RH}(\lambda)$ solves the $\bar\partial$-RH problem \ref{db4.3} with $\bar\partial\mathcal{R}\equiv 0$.
\end{rhp}
\begin{dbarproblem}\label{d4.5}
	\
	\begin{itemize}
		\item $M^{(2)}_{D}(\lambda)$ is continuous in $\mathbb{C}$.
		\item As $\lambda\to\infty$, we have $M^{(2)}_{D}(\lambda)=I+\oo(\lambda^{-1})$.
		\item For $\lambda\in\mathbb{C}$, we have $\bar\partial M^{(2)}_D(\lambda)=M^{(2)}_D(\lambda)\tilde{\mathcal{R}}(\lambda)$, $\tilde{\mathcal{R}}(\lambda)=M^{(2)}_{RH}(\lambda)\bar\partial\mathcal{R}(\lambda)(M^{(2)}_{RH}(\lambda))^{-1}$.
	\end{itemize}
	
\end{dbarproblem}

\subsubsection{Reflection coefficient vanishes}\label{s531}

In this part, we discuss the RH problem \ref{r4.2} when the reflection coefficient vanishing and denote its solution as $M^{(\mathcal{Z})}(\lambda)=M^{(\mathcal{Z})}(\lambda,n,t)$.
\begin{rhp}\label{r5.6}
	$M^{(\mathcal{Z})}(\lambda)$ solves RH problem \ref{r4.2} with $r\equiv0$, that is, $V^{(1)}\equiv I$.
\end{rhp}
For any $\lambda_{l}\in\mathcal{Z}_\xi^-$, by the method to prove Proposition \ref{p3}, there are two rational functions $f_{\lambda_j}^{\xi}(\lambda)$, $f_{\bar\lambda_j^{-1}}^{\xi}(\lambda)$ such that $M^{(\mathcal{Z})}(\lambda)\left(\begin{matrix}
	1&f_{\lambda_{l}}^{\xi}(\lambda)e^{\phi(\lambda)}\\0&1
\end{matrix}\right)$, $M^{(\mathcal{Z})}(\lambda)\left(\begin{matrix}
	1&0\\f_{\bar\lambda_{l}^{-1}}^{\xi}(\lambda)e^{-\phi(\lambda)}&1
\end{matrix}\right)$ are holomorphic at $\lambda=\lambda_{l}$, $\lambda=\bar\lambda_{l}^{-1}$, respectively,
while for $\lambda_{l}\in\mathcal{Z}\setminus\left(\mathcal{Z}_\xi^-\cup\mathcal{Z}_\xi\right)$, there are rational functions $f_{\lambda_j}^{\xi}(\lambda)$, $f_{\bar\lambda_j^{-1}}^{\xi}(\lambda)$ such that $M^{(\mathcal{Z})}(\lambda)\left(\begin{matrix}
	1&0\\f_{\lambda_{l}}^{\xi}(\lambda)e^{-\phi(\lambda)}&1
\end{matrix}\right)$, $M^{(\mathcal{Z})}(\lambda)\left(\begin{matrix}
	1&f_{\bar\lambda_{l}^{-1}}^{\xi}(\lambda)e^{\phi(\lambda)}\\0&1
\end{matrix}\right)$ are holomorphic at $\lambda=\lambda_{l}$, $\lambda=\bar\lambda_{l}^{-1}$, respectively.
In view of these facts, choosing a small enough positive constant, we construct the matrix-valued function
\begin{align}\label{e5.34}
	M^{(3)}(\lambda)=M^{(\mathcal{Z})}(\lambda)\begin{cases}
		\left(\begin{matrix}
			1&f_{\lambda_{l}}^{\xi}(\lambda)e^{\phi(\lambda)}\\0&1
		\end{matrix}\right)&\abs{\lambda-\lambda_{l}}<\epsilon_1,\quad \lambda_{l}\in\mathcal{Z}_\xi^-,\\
		\left(\begin{matrix}
			1&0\\f_{\bar\lambda_{l}^{-1}}^{\xi}(\lambda)e^{-\phi(\lambda)}&1
		\end{matrix}\right)&\abs{\lambda-\bar\lambda_{l}^{-1}}<\epsilon_1,\quad \lambda_{l}\in\mathcal{Z}_\xi^-,\\
		\left(\begin{matrix}
			1&0\\f_{\lambda_{l}}^{\xi}(\lambda)e^{-\phi(\lambda)}&1
		\end{matrix}\right)&\abs{\lambda-\lambda_{l}}<\epsilon_1,\quad \lambda_{l}\in\mathcal{Z}\setminus(\mathcal{Z}_\xi^-\cup\mathcal{Z}_\xi),\\
		\left(\begin{matrix}
			1&f_{\bar\lambda_{l}^{-1}}^{\xi}(\lambda)e^{\phi(\lambda)}\\0&1
		\end{matrix}\right)&\abs{\lambda-\bar\lambda_{l}^{-1}}<\epsilon_1,\quad \lambda_{l}\in\mathcal{Z}\setminus(\mathcal{Z}_\xi^-\cup\mathcal{Z}_\xi),\\
		I&otherwise.
	\end{cases}
\end{align}
It is clear from \eqref{e5.34} that $M^{(3)}(\lambda)$ is the solution of an RH problem that is jump condition is over $\Sigma^{(3)}=\bigcup_{\lambda\in\mathcal{Z}
	\setminus\mathcal{Z}_\xi}\{\abs{\lambda-\lambda_{l}}=\epsilon_1\text{ or }\abs{\lambda-\bar\lambda_{l}^{-1}}=\epsilon_1\}$, and seeing the signature table of $\re\phi(\lambda)$ in FIGURE \ref{f2}, if $\epsilon_1$ is small enough, there exists a positive constant $C$ such that
\begin{align}\label{e5.35}
	\norm{V^{(3)}-I}_{L^\infty(\Sigma^{(3)})}\le e^{-Ct}.
\end{align}
Explicitly, the RH problem reads as follows.
\begin{rhp}
	\
	\begin{itemize}
		\item $M^{(3)}(\lambda)$ is holomorphic in $\mathbb{C}
		\setminus(\Sigma^{(3)}\cup\mathcal{Z}_\xi\cup\overline{\mathcal{Z}_\xi^{-1}})$.
		\item As $\lambda\to\infty$, $M^{(3)}(\lambda)I+\oo(\lambda^{-1})$.
		\item On $\Sigma^{(3)}$, $M^{(3)}_+(\lambda)=M^{(3)}_-(\lambda)V^{(3)}(\lambda)$, where
		\begin{align}
			V^{(3)}(\lambda)=\begin{cases}
				\left(\begin{matrix}
					1&f_{\lambda_{l}}^{\xi}(\lambda)e^{\phi(\lambda)}\\0&1
				\end{matrix}\right)&\abs{\lambda-\lambda_{l}}=\epsilon_1,\ \lambda_{l}\in\mathcal{Z}_\xi^-,\\
				\left(\begin{matrix}
					1&0\\f_{\bar\lambda_{l}^{-1}}^{\xi}(\lambda)e^{-\phi(\lambda)}&1
				\end{matrix}\right)&\abs{\lambda-\bar\lambda_{l}^{-1}}=\epsilon_1,\ \lambda_{l}\in\mathcal{Z}_\xi^-,\\
				\left(\begin{matrix}
					1&0\\f_{\lambda_{l}}^{\xi}(\lambda)e^{-\phi(\lambda)}&1
				\end{matrix}\right)&\abs{\lambda-\lambda_{l}}=\epsilon_1,\ \lambda_{l}\in\mathcal{Z}\setminus(\mathcal{Z}_\xi^-\cup\mathcal{Z}_\xi),\\
				\left(\begin{matrix}
					1&f_{\bar\lambda_{l}^{-1}}^{\xi}(\lambda)e^{\phi(\lambda)}\\0&1
				\end{matrix}\right)&\abs{\lambda-\bar\lambda_{l}^{-1}}=\epsilon_1,\ \lambda_{l}\in\mathcal{Z}\setminus(\mathcal{Z}_\xi^-\cup\mathcal{Z}_\xi).
			\end{cases}
		\end{align}
		\item When $\mathcal{Z}_\xi\ne\emptyset$, the residue condition corresponds that of RH problem \ref{r4.2} on $\mathcal{Z}_\xi$.
	\end{itemize}
\end{rhp}

Set
\begin{align}\label{e5.43}
	M^{(\mathcal{Z}_\xi)}=\begin{cases}
		I&\mathcal{Z}_\xi=\emptyset,\\
		M^{(\mathcal{Z}_\xi,\pp^T_\xi)}&\mathcal{Z}_\xi\ne\emptyset,
	\end{cases}
\end{align}
where $M^{(\mathcal{Z}_\xi,\pp^T_\xi)}$ is as defined in Remark \ref{r3.7}, and the discrete spectral data $(\mathcal{Z}_\xi,\pp^T_\xi)$ is the restriction of RH problem \ref{r4.2}'s discrete spectral data $(\mathcal{Z},\pp^T)$ on $\mathcal{Z}_\xi$.
Recalling the discussion in Remark \ref{r3.7}, we have that $M^{(\mathcal{Z}_\xi)}$ uniquely exists.
Referring to Proposition \ref{p3}, for any $\lambda_{l}\in\mathcal{Z}_\xi$, there is a rational function $f_{\lambda_{l}}(\lambda)$ such that both $M^{(3)}(\lambda)\left(\begin{matrix}
		1&0\\f_{\lambda_{l}}(\lambda)e^{-\phi(\lambda)}&1
	\end{matrix}\right)$ and $M^{(\mathcal{Z}_\xi)}(\lambda)\left(\begin{matrix}
		1&0\\f_{\lambda_{l}}e^{-\phi(\lambda)}&1
	\end{matrix}\right)$
are holomorphic at $\lambda=\lambda_{l}$, and then it follows that
\begin{align}\label{e5.39}
	\tilde M^{(3)}(\lambda)=M^{(3)}(\lambda)\left(M^{(\mathcal{Z}_\xi)}(\lambda)\right)^{-1}
\end{align}
is analytic at $\lambda=\lambda_{l}$.
Similarly, it is also analytic at $\lambda=\bar\lambda_{l}^{-1}$.
Based on the above arguments, it is clear that \eqref{e5.39} solves a small-norm RH problem without poles
\begin{rhp}
	\
	\begin{itemize}
		\item $\tilde M^{(3)}(\lambda)$ is holomorphic in $\lambda\in\mathbb{C}\setminus\Sigma^{(3)}$. % 应该是$\mathbb{C} $\backslash$ \Sigma^{(3)}$吧？
		\item As $\lambda\to\infty$, we have $\tilde M^{(3)}(\lambda)=I+\oo(\lambda^{-1})$.
		\item On $\Sigma^{(3)}$, we have $\tilde M^{(3)}_+(\lambda)=\tilde M^{(3)}_-(\lambda)\tilde V^{(3)}(\lambda)$, where $\tilde V^{(3)}(\lambda)=M^{(\mathcal{Z}_\xi)}(\lambda)V^{(3)}(\lambda)\left(M^{(\mathcal{Z}_\xi)}(\lambda)\right)^{-1}$.
	\end{itemize}
\end{rhp}
By \eqref{e5.35} and the boundedness of $M^{(\mathcal{Z}_\xi)}(\lambda)$ on $\Sigma^{(3)}$, for $t\gg0$, the jump matrix $\tilde V^{(3)}(\lambda)-I$ decays exponentially,
\begin{align}\label{e5.41}
	\norm{\tilde V^{(3)}-I}_{L^\infty(\Sigma^{(3)})}\lesssim e^{-Ct}.
\end{align}
Finally, by the small norm theory for the RH problems shown in \cite{deift1999strong,kamvissis2003semiclassical}, it follows from \eqref{e5.34} and \eqref{e5.39} that
\begin{align}\label{e5.46}
	&\quad\quad\quad M^{(\mathcal{Z})}(0)=\tilde M^{(3)}(0)M^{(\mathcal{Z}_\xi)}(0)=\left(I+\oo(e^{-Ct})\right)M^{(\mathcal{Z}_\xi)}(0),\\
	&M^{(\mathcal{Z})}(S_j)=\tilde M^{(3)}(S_j)M^{(\mathcal{Z}_\xi)}(S_j)=\left(I+\oo(e^{-Ct})\right)M^{(\mathcal{Z}_\xi)}(S_j),\quad j=1,2.\label{e5.47}
\end{align}

\subsubsection{Solvability of $M^{(2)}_{RH}(\lambda)$}

In this part, we discuss the solvability of $M^{(2)}_{RH}(\lambda)$.
For $j=1,2$, transform $\lambda\mapsto\zeta$,
\begin{align}\label{e4.30}
	&\lambda=\beta_j\zeta+S_j,\quad\beta_j=(-1)^j\frac{\ii}{\sqrt{2}}(1-\xi^2)^{-\frac{1}{4}}t^{-\frac{1}{2}}S_j.
\end{align}
Under the scaling \eqref{e4.30}, it is readily seen that $R_1(\lambda)$, $R_2(\lambda)$, and $T(\lambda)$ admit the following asymptotic properties around $S_1$ and $S_2$.
\begin{proposition}\label{p4.9}
	If we set the transformation for $j=1,2$: $\lambda=\beta_j\zeta+S_j$, where $\beta_j$ are defined as \eqref{e4.30} and $S_j$ are the stationary phase points,
	then when $\xi\in(-1,1)$, we have the following asymptotics that as $\lambda\to S_j$,
	\begin{align}
		&R_1(\lambda)\sim r(S_j)+\oo(t^{-\frac{1}{4}}\abs{\zeta}^{\frac{1}{2}}),\quad R_2(\lambda)\sim \frac{\overline{r(S_j)}}{1+\abs{r(S_j)}^2}+\oo(t^{-\frac{1}{4}}\abs{\zeta}^{\frac{1}{2}}),\label{e4.31}\\
		&T^2(\lambda)e^{-\phi(\lambda)}\sim T_j^2\zeta^{(-1)^{j}2\ii\nu_j}e^{-(-1)^j\ii\frac{\zeta^2}{2}}+\oo(t^{-\frac{1}{4}}\abs{\zeta}^{\frac{1}{2}}e^{-(-1)^j\ii\frac{\zeta^2}{2}}),\label{e4.32}
	\end{align}
where 
\begin{align}\label{e5.58}
	T_j=\prod_{\lambda_l\in\mathcal{Z}_\xi^-}\left(\frac{S_j-\lambda_l}{S_j-\bar\lambda_l^{-1}}\right)^{\alpha_l}\frac{(-2\sqrt{2}(1-\xi^2)^\frac{3}{4}t^{\frac{1}{2}})^{(-1)^{j-1}\ii\nu_j}(\ii S_j)^{(-1)^{j}\ii\nu_j}}{e^{\ii t[(-1)^{j}\sqrt{1-\xi^2}-\xi\arg S_j-1]-\tilde\alpha_j(S_j)}}.
\end{align}
\end{proposition}
\begin{proof}
	On the one hand, recalling that $r\in H^1$, we have that $r$ is $\frac{1}{2}$-H\"older continous and for $\lambda$ on the neighborhood of $S_j$,
	\begin{equation}
		\begin{split}
			R_1(\lambda)-r(S_j)=r(\frac{\beta_j\zeta+S_j}{\abs{\beta_j\zeta+S_j}})-r(S_j)\lesssim\abs{\frac{\beta_j\zeta+S_j}{\abs{\beta_j\zeta+S_j}}-S_j}^{\frac{1}{2}}\lesssim\abs{\beta_j\zeta}^{\frac{1}{2}}\lesssim t^{-\frac{1}{4}}\abs{\zeta}^{\frac{1}{2}},
		\end{split}
	\end{equation}
	and we proved the result for $R_1$ in \eqref{e4.31}.
	In addition, when $r\in H^1$, it is readily seen that $\frac{r}{1+\abs{r}^2}\in H^1$, therefore, we similarly obtain that for $R_2(\lambda)$ in \eqref{e4.31} and complete the proof of \eqref{e4.31}.
	On the other hand, the result \eqref{e4.32} is the consequence of \eqref{e5.2}, \eqref{e3.5}, \eqref{e4.30}, and \eqref{e5.58}.
	This completes the proof.
\end{proof}
Based on these asymptotics, we construct the model RH problem \ref{r4.6} around the neighborhoods of $S_1$, $S_2$, respectively, and the jump contours $\Sigma^{(2,j)}$ are depicted in FIGURE \ref{f4}.
\begin{rhp}\label{r4.6}
	\
	\begin{itemize}
		\item $M^{(2,j)}(\lambda)$ is analytic in $\mathbb{C}\setminus\Sigma^{(2,j)}$.
		\item $M^{(2,j)}(\lambda)=I+\oo(\lambda^{-1})$ as $\lambda\to\infty$.
		\item $M^{(2,j)}(\lambda)$ admits the jump condition on $\lambda\in\Sigma^{(2,j)}=\bigcup_{k=1}^4\Sigma^{(2,j)}_k$, $\Sigma^{2,j}_k=S_j(1+e^{\frac{\ii\pi}{4}(2k+4j-7)})$,
		\begin{align}
			(M^{(2,j)})_+(\lambda)=(M^{(2,j)})_-(\lambda)V^{(2,j)}(\lambda),
		\end{align}
		where
		\begin{subequations}\label{e4.35}
			\begin{align}
				&V^{(2,1)}(\lambda=\beta_1\zeta+S_1)=\begin{cases}
					\left(\begin{matrix}
						1&0\\T_1^2r(S_1)\zeta^{-2\ii\nu_1}e^{\frac{\ii\zeta^2}{2}}&1
					\end{matrix}\right)&\lambda\in\Sigma^{(2,1)}_1,\\
					\left(\begin{matrix}
						1&T_1^{-2}\frac{\overline{r(S_1)}}{1+\abs{r(S_1)}^2}\zeta^{2\ii\nu_1}e^{-\frac{\ii\zeta^2}{2}}\\0&1
					\end{matrix}\right)&\lambda\in\Sigma^{(2,1)}_2,\\
					\left(\begin{matrix}
						1&0\\T_1^{2}\frac{r(S_1)}{1+\abs{r(S_1)}^2}\zeta^{-2\ii\nu_1}e^{\frac{\ii\zeta^2}{2}}&1
					\end{matrix}\right)&\lambda\in\Sigma^{(2,1)}_3,\\
					\left(\begin{matrix}
						1&T_1^{-2}\overline{r(S_1)}\zeta^{2\ii\nu_1}e^{-\frac{\ii\zeta^2}{2}}\\0&1
					\end{matrix}\right)&\lambda\in\Sigma^{(2,1)}_4,
				\end{cases}\\
				&V^{(2,2)}(\lambda=\beta_2\zeta+S_2)=\begin{cases}
					\left(\begin{matrix}
						1&-T_2^{-2}\overline{r(S_2)}\zeta^{-2\ii\nu_2}e^{\frac{\ii\zeta^2}{2}}\\0&1
					\end{matrix}\right)&\lambda\in\Sigma^{(2,2)}_1,\\
					\left(\begin{matrix}
						1&0\\T_2^{2}\frac{-r(S_2)}{1+\abs{r(S_2)}^2}\zeta^{2\ii\nu_2}e^{-\frac{\ii\zeta^2}{2}}&1
					\end{matrix}\right)&\lambda\in\Sigma^{(2,2)}_2,\\
					\left(\begin{matrix}
						1&T_2^{-2}\frac{-\overline{r(S_2)}}{1+\abs{r(S_2)}^2}\zeta^{-2\ii\nu_2}e^{\frac{\ii\zeta^2}{2}}\\0&1
					\end{matrix}\right)&\lambda\in\Sigma^{(2,2)}_3,\\
					\left(\begin{matrix}
						1&0\\-T_2^2r(S_2)\zeta^{2\ii\nu_2}e^{-\frac{\ii\zeta^2}{2}}&1
					\end{matrix}\right)&\lambda\in\Sigma^{(2,2)}_4.
				\end{cases}
			\end{align}
		\end{subequations}
	\end{itemize}
\end{rhp}
Choosing a small enough constant $\epsilon_2\in(0,\epsilon_0)$ and small discs $U_j=\{\lambda\in\mathbb{C}:\abs{\lambda-S_j}<\epsilon_2\}$,
we rewrite $M^{(2)}_{RH}(\lambda)$ in the form
\begin{align}\label{e4.22}
	M^{(2)}_{RH}(\lambda)=\begin{cases}
		E(\lambda)M^{(\mathcal{Z})}(\lambda),&\lambda\in\mathbb{C}\setminus(\Sigma^{(2)}\cup U_1\cup U_2),\\
		E(\lambda)M^{(\mathcal{Z})}(\lambda)M^{(2,j)}(\lambda),&\lambda\in U_j\setminus\Sigma^{(2)}, \quad j=1,2,
	\end{cases}
\end{align}
where $M^{(\mathcal{Z})}(\lambda)$ is the solution of RH problem \ref{r5.6}, $M^{(2,j)}(\lambda)$ is the solution of RH problem \ref{r4.6}, and $E(\lambda)$ admits the RH problem \ref{r4.5} with the jump contour $\Sigma^E=\partial U_1\cup\partial U_2\cup(\Sigma^{(2)})$ depicted as FIGURE \ref{f5}.
\begin{rhp}\label{r4.5}
	\
	\begin{itemize}
		\item $E(\lambda)$ is analytic in $\mathbb{C}\setminus\Sigma^E$.
		\item As $\lambda\to\infty$, $E(\lambda)=I+\oo(\lambda^{-1})$.
		\item On $\Sigma^E$, we have
		\begin{align}
			E_+(\lambda)=E_-(\lambda)V^E(\lambda),
		\end{align}
		where
		\begin{align}\label{e4.33}
			V^E=\begin{cases}
				M^{(\mathcal{Z})}V^{(2)}(M^{(\mathcal{Z})})^{-1},&\lambda\in\Sigma^{(2)}\setminus(\partial U_1\cup\partial U_2),\\
				M^{(\mathcal{Z})}M^{(2,j)}V^{(2)}(V^{(2,j)})^{-1}(M^{(\mathcal{Z})}M^{(2,j)})^{-1},&\lambda\in\Sigma^{(2)}\cap U_j,\\
				M^{(\mathcal{Z})}M^{(2,j)}(M^{(\mathcal{Z})})^{-1},&\lambda\in\partial U_1\cup\partial U_2.
			\end{cases}
		\end{align}
	\end{itemize}
\end{rhp}
In \eqref{e4.22}, recalling the discussion in Section \ref{s531}, we see that $M^{(\mathcal{Z})}$ is uniquely solved for $t\gg0$, and also, $M^{(2,j)}$ can be represented by
\begin{subequations}\label{e4.26}
	\begin{align}
		&M^{(2,1)}(\lambda)=T_1^{-\sigma_3}M^{(PC)}(\zeta,r(S_1))T_1^{\sigma_3},\\
		&M^{(2,2)}(\lambda)=\sigma_1T_2^{\sigma_3}M^{(PC)}(\zeta,-\overline{r(S_2)})T_2^{-\sigma_3}\sigma_1,
	\end{align}
\end{subequations}
where $M^{(PC)}$ is well-acknowledged as the model RH problem \ref{rhpc}.
Thus, to see the solvability of $M^{(2)}_{RH}$, we only have to look into the part of $E$, i.e., the solvability of RH problem \ref{r4.5}.

\begin{figure}
	\centering
	\begin{subfigure}{0.45\textwidth}
		\centering\includegraphics{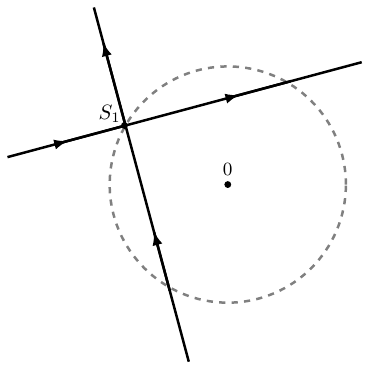}
		\subcaption*{$\Sigma^{(2,1)}$}
	\end{subfigure}
	\begin{subfigure}{0.45\textwidth}
		\centering\includegraphics{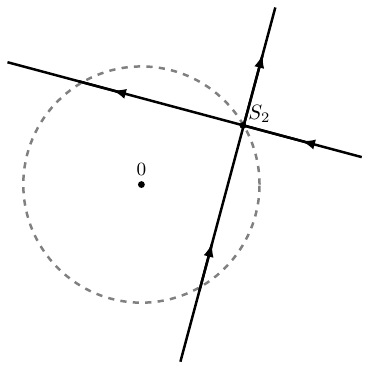}
		\subcaption*{$\Sigma^{(2,2)}$}
	\end{subfigure}
	\caption{Jump contours  $\Sigma^{(2,j)}$, $j=1,2$.}\label{f4}
	%	\caption{Jump contours for RH problem \ref{r4.6}: $\Sigma^{(2,j)}$.}\label{f4}
\end{figure}
\begin{figure}
	\centering\includegraphics[width=0.3\textwidth]{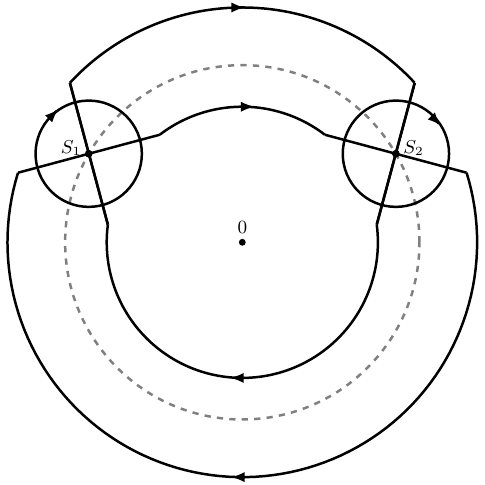}
	\caption{The jump contour $\Sigma^E=\Sigma^{(2)}\cup\partial U_1\cup\partial U_2$, where $\partial U_1$ and $\partial U_2$ are oriented clockwise.}\label{f5}
\end{figure}

We claim that RH problem \ref{r4.5} is a small-norm RH problem for $t\gg0$.
By \eqref{e3.5}, \eqref{e4.17}, \eqref{e4.21}, and the signature table of $\re \phi(\lambda)$, it is readily seen that for some positive constant $C$
\begin{align}\label{e4.29}
	\abs{V^{(2)}(\lambda)-I}\le e^{-Ct},\quad \lambda\in\Sigma^{(2)}\setminus(U_1\cup U_2).
\end{align}
Considering \eqref{e4.21}, \eqref{e4.35}, and Proposition \ref{p4.9}, we obtain that
\begin{equation}\label{e4.28}
	\begin{split}
		&\abs{V^{(2)}(\lambda)(V^{(2,j)}(\lambda))^{-1}-I}\lesssim\abs{V^{(2)}(\lambda)-V^{(2,j)}(\lambda)}\lesssim t^{-\frac{1}{4}}\abs{\zeta}^{\frac{1}{2}}e^{-\frac{\abs{\zeta}^2}{2}},
	\end{split}
	\quad\lambda\in \Sigma^{(2)}\cap U_j.
\end{equation}
In view of \eqref{e4.26}, \eqref{e4.27}, and \eqref{e4.30}, it follows that
\begin{align}\label{e4.34}
	\abs{M^{(2,j)}(\lambda)-I}\lesssim t^{-\frac{1}{2}},\quad \lambda\in\partial U_j.
\end{align}
Considering \eqref{e4.33}, \eqref{e4.29}, \eqref{e4.28}, \eqref{e4.34}, and the boundedness of $M^{(\mathcal{Z})}(\lambda)$ on $\Sigma^E$, we conclude that
\begin{align}\label{e4.36}
	\abs{V^E(\lambda)-I}\lesssim t^{-\frac{1}{4}}.
\end{align}
Since $\cc_-^{\Sigma^E}$ is a bounded operator on $L^2(\Sigma^E)$, by \eqref{e4.36}, it is readily seen that the Cauchy-type integral operator
\begin{align}
	\cc_{V^E-I}:\ g\mapsto \cc_{V^E-I}g=\cc_-(g(V^E-I))
\end{align}
is consequently bounded on $L^2(\Sigma^E)$, and
\begin{align}\label{e4.37}
	\norm{\cc_{V^E-I}}_{L^2_{op}(\Sigma^E)}\lesssim t^{-\frac{1}{4}}.
\end{align}
\eqref{e4.37} indicates that as $t\to+\infty$, the operator decays, and for sufficiently large $t$, $(\identity-\cc_{V^E-I})^{-1}$ exists and is bounded on $L^2(\Sigma^E)$.
By RH problem \ref{r4.5}, it follows that the solution can be written in the integral form:
\begin{align}\label{e5.65}
	E(\lambda)=I+\oint_{\Sigma^E}\frac{[(\identity-\cc_{V^E-I})^{-1}I](\varsigma)(V^E(\varsigma)-I)}{\varsigma-\lambda}\frac{\ddddd \varsigma}{2\pi \ii}.
\end{align}

Now, we focus on the estimate of $E(0)$, and assert that as $t\to+\infty$,
\begin{equation}\label{e5.66}
	\begin{split}
		&E(0)-I=t^{-\frac{1}{2}}\frac{M^{(\mathcal{Z_\xi})}(S_1)}{\sqrt{2}(1-\xi^2)^{\frac{1}{4}}}\left(\begin{matrix}
			0&\frac{\gamma_1(r(S_1))}{T_1^2}\\-\gamma_2(r(S_1))T_1^2&0
		\end{matrix}\right)\left(M^{(\mathcal{Z_\xi})}(S_1)\right)^{-1}\\
		&\quad\quad\quad+t^{-\frac{1}{2}}\frac{M^{(\mathcal{Z_\xi})}(S_2)}{\sqrt{2}(1-\xi^2)^{\frac{1}{4}}}\left(\begin{matrix}
			0&-\frac{\gamma_2(\overline{r(S_2)})}{T_2^2}\\\gamma_1(\overline{r(S_2)})T_2^2&0
		\end{matrix}\right)\left(M^{(\mathcal{Z_\xi})}(S_2)\right)^{-1}+\oo(t^{-\frac{3}{4}}).
	\end{split}	
\end{equation}
Taking the value of \eqref{e5.65} at $\lambda=0$, we have
\begin{equation}\label{e5.67}
	\begin{split}
		&E(0)=I+\oint_{\partial U_1\cup\partial U_2}\varsigma^{-1}[(\identity-\cc_{V^E-I})^{-1}\cc_{V^E-I}I](\varsigma)(V^E(\varsigma)-I)\frac{\ddddd \varsigma}{2\pi\ii}\\
		&\quad+\oint_{\Sigma^{(2)}}\varsigma^{-1}[(\identity-\cc_{V^E-I})^{-1}I](\varsigma)(V^E(\varsigma)-I)\frac{\ddddd \varsigma}{2\pi\ii}+\oint_{\partial U_1\cup\partial U_2}\varsigma^{-1}(V^E(\varsigma)-I)\frac{\ddddd \varsigma}{2\pi\ii}.
	\end{split}
\end{equation}
In \eqref{e5.67}, the first integral is dominated by $\oo(t^{-\frac{3}{4}})$ because of \eqref{e4.34} and \eqref{e4.37}, and in view of \eqref{e4.29} and \eqref{e4.28}, it follows that the second integral is also constrained by $\oo(t^{-\frac{3}{4}})$,
that is, as $t\to+\infty$,
\begin{equation}\label{e5.6}
	\begin{split}
		&E(0)=I+\oint_{\partial U_1\cup\partial U_2}\varsigma^{-1}(V^E(\varsigma)-I)\frac{\ddddd \varsigma}{2\pi\ii}+\oo(t^{-\frac{3}{4}}).
	\end{split}
\end{equation}
Using \eqref{e5.47}, \eqref{e4.33}, \eqref{e4.26}, \eqref{e4.27}, and Cauchy's integral formula, we formulate the integral term in \eqref{e5.6}, and consequently obtain \eqref{e5.66}.

We have proved that $M^{(2)}_{RH}(\lambda)$ is uniquely solved,
then we also claim that it is invertible.
In fact, it is readily seen that $\det M^{(2)}_{RH}(\lambda)$ has no jump over $\mathbb{C}$ and tends to 1 at the infinity.
In addition, utilizing the technique to prove Proposition \ref{p3}, we can remove the pole of $M^{(2)}_{RH}(\lambda)$ at $\lambda=\lambda_{l}$ by a triangular function with constant determinant,
that is, the singularities of $\det M^{(2)}_{RH}(\lambda)$ at $\lambda=\lambda_{l}$ are removable.
In the similar way, its singularities at $\lambda=\bar\lambda_{l}^{-1}$ are also removable.
By Liouville's Theorem, $\det M^{(2)}_{RH}(\lambda)$ is the constant 1 over $\mathbb{C}$, that is, $M^{(2)}_{RH}(\lambda)$ is invertible.

Defining the function
\begin{align}\label{e5.70}
	M^{(2)}_D(\lambda)=M^{(2)}(\lambda)\left(M^{(2)}_{RH}(\lambda)\right)^{-1}.
\end{align}
Since $M^{(2)}_{RH}(\lambda)$ is invertible, the definition \eqref{e5.70} is well-posed,
and comparing $\bar\partial$-RH problem \ref{db4.3} to RH problem \ref{r4.4}, we find that $M^{(2)}_D(\lambda)$ is the solution of $\bar\partial$ problem \ref{d4.5}.

\subsubsection{Estimate on the $\bar\partial$ problem}\label{s4.3.2}
Since we have already proved the existence of $M^{(2)}_{RH}(\lambda)$, in this part, we concentrate on some estimates of $M^{(2)}_D(\lambda)$.

$\bar\partial$ problem \ref{db4.3} is equivalent to the identity,
\begin{align}\label{e4.50}
	(\identity-S)M^{(2)}_D=I,\quad Sf(\lambda)=\iint_{\mathbb{C}}\frac{f(\varsigma)\tilde{\mathcal{R}}(\varsigma)}{\varsigma-\lambda}\frac{\ddddd L(\varsigma)}{\pi},
\end{align}
where $L(s)$ is the  Lebesgue's measure on the complex plane.
The linear integral operator $S$ is proved bounded on $L^{\infty}(\mathbb{C})$ in Proposition \ref{p4.10}, and the bound $\norm{S}_{L^\infty_{op}(\mathbb{C})}$ decays as $t\to\infty$.
Therefore, \eqref{e4.50} is uniquely solved in $L^\infty(\mathbb{C})$ for $t\gg0$.

\begin{proposition}\label{p4.10}
	Define the integral operator $S$ as \eqref{e4.50}.
	If $r\in H^1$, then $S$ is bounded on $L^\infty(\mathbb{C})$, and as $t\gg0$, the bound is dominated by $t^{-\frac{1}{4}}$, that is,
	\begin{align}
		\norm{S}_{L^\infty_{op}(\mathbb{C})}\lesssim t^{-\frac{1}{4}}.
	\end{align}
\end{proposition}
\begin{proof}
	Recalling that $\tilde{\mathcal{R}}(\lambda)$ is supported on $\bigcup_{k=1}^4\Omega_k$ and that the discrete spectrum of RH problem \ref{r4.4} is away from $\bigcup_{k=1}^4\Omega_k$, we obtain that $M^{(2)}_{RH}(\lambda)$, $(M^{(2)}_{RH}(\lambda))^{-1}$, $T(\lambda)$, and $T^{-1}(\lambda)$ are all bounded on $\bigcup_{k=1}^4\Omega_k$, and consequently,
	\begin{equation}\label{e4.52}
		\begin{split}
			&\abs{Sf(\lambda)}=\abs{\iint_{\mathbb{C}}\frac{f(\varsigma)\tilde{\mathcal{R}}(\varsigma)}{\varsigma-\lambda}\frac{\ddddd L(\varsigma)}{\pi}}=\abs{\iint_{\bigcup_{k=1}^4\Omega_k}\frac{f(\varsigma)\tilde{\mathcal{R}}(\varsigma)}{\varsigma-\lambda}\frac{\ddddd L(\varsigma)}{\pi}}\\
			&\quad \lesssim\norm{f}_{L^\infty(\mathbb{C})}\sum_{k=1}^4\iint_{\Omega_k}\abs{\frac{\bar\partial\mathcal{R}(\varsigma)}{\varsigma-\lambda}}\frac{\ddddd L(\varsigma)}{\pi}
			=\norm{f}_{L^\infty(\mathbb{C})}\sum_{k=1}^4\iint_{\Omega_k}\abs{\frac{[e^{(-1)^{k}\phi}\bar\partial R_k](\varsigma)}{\varsigma-\lambda}}\frac{\ddddd L(\varsigma)}{\pi}.
		\end{split}
	\end{equation}
	
	For the region $\Omega_1$, we write $\varsigma\in\Omega_1$ in the polar coordinates: $\varsigma=\rho e^{\ii\theta}$, $(\rho,\theta)\in(1,\rho_0)\times(\pi-\theta_\rho,\theta_\rho)$, where
	\begin{align}
		\rho_0=\abs{1+\epsilon_0e^{\ii\pi/4}}\quad \rho e^{\ii\theta_\rho}\in S_1(1+e^{-\ii\pi/4}(0,\epsilon_0))\quad \rho e^{\ii(\pi-\theta_\rho)}\in S_2(1+e^{\ii\pi/4}(0,\epsilon_0)).
	\end{align}
	It is readily seen from the Taylor's expansion of \eqref{e2.18} at $\lambda=S_1$ that as $\rho e^{\ii\theta_\rho}\to S_1$,
	\begin{equation}\label{e4.53}
		\begin{split}
			\re\phi(\rho e^{\ii\theta_\rho})
			=2t\sqrt{1-\xi^2}(\rho-1)^2+\oo(\rho-1)^3.
		\end{split}
	\end{equation}
	Utilizing \eqref{e4.3a} and the boundedness of $T$ on $\Omega_1$, we estimate the integral on $\Omega_1$ for $\lambda\ne0$,
	\begin{equation}\label{e4.54}
		\begin{split}
			&\iint_{\Omega_1}\abs{\frac{[\bar\partial R_1e^{-\phi}](\varsigma)}{\varsigma-\lambda}}\frac{\ddddd L(\varsigma)}{\pi} =\int_{1}^{\rho_0}\int_{\pi-\theta_\rho}^{\theta_\rho}\abs{\frac{\rho^{-1}\partial_\theta(r(e^{\ii\theta}))e^{-\phi(\rho e^{\ii\theta})}}{2\pi(\rho e^{\ii\theta}-\lambda)}}\rho\ddddd\theta\ddddd\rho\\
			&\quad \le\int_{1}^{\rho_0}\int_{-\frac{\pi}{2}}^{\frac{3\pi}{2}}\frac{\abs{\partial_\theta (r(e^{\ii\theta}))}e^{-\re\phi(\rho e^{\ii\theta_\rho})}}{2\pi\abs{\rho e^{\ii\theta}-\lambda}}\ddddd\theta\ddddd\rho\\
			&\quad \le\int_{1}^{\rho_0}\ddddd\rho\frac{e^{-\re\phi(\rho e^{\ii\theta_\rho})}}{2\pi}\norm{r}_{H^1}\left(\int_{-\pi}^{\pi}\abs{\rho e^{\ii\theta}-\lambda}^{-2}\ddddd\theta\right)^\frac{1}{2}\\
			&\quad\lesssim\int_{1}^{\rho_0}\ddddd\rho\frac{e^{-\re\phi(\rho e^{\ii\theta_\rho})}}{2\pi\abs{\rho-\abs{\lambda}}^{1/2}}.
		\end{split}
	\end{equation}
	For small enough $\epsilon_0$, it is readily seen from \eqref{e4.53} and \eqref{e4.54} that
	\begin{align}\label{e4.55}
		\iint_{\Omega_1}\abs{\frac{[\bar\partial R_1e^{-\phi}](\varsigma)}{\varsigma-\lambda}}\frac{\ddddd L(\varsigma)}{\pi}\le\int_{0}^{\rho_0}\ddddd\rho\frac{e^{-t\sqrt{1-\xi^2}(\rho-1)^2}}{2\pi\abs{\rho-\abs{\lambda}}^{1/2}}\lesssim t^{-\frac{1}{4}}.
	\end{align}
	
	For regions $\Omega_2, \Omega_3, \Omega_4$, letting $\lambda\ne0$, it admits the same estimates as \eqref{e4.55}, that is, for sufficiently small $\epsilon_0$ and $k=1,2,3,4$,
	\begin{align}\label{e4.56}
		\iint_{\Omega_k}\abs{\frac{[e^{(-1)^{k}\phi}\bar\partial R_k](\varsigma)}{\varsigma-\lambda}}\frac{\ddddd L(\varsigma)}{\pi}\lesssim t^{-\frac{1}{4}}.
	\end{align}
	Substituting \eqref{e4.56} into \eqref{e4.52}, we finally prove the results.
\end{proof}

Now, we focus on the estimate of $M^{(2)}_D(0)$,
\begin{equation}\label{e5.18}
	\begin{split}
		M^{(2)}_D(0)=I+\iint_{\mathbb{C}}[M^{(2)}_D\tilde{\mathcal{R}}](\varsigma)\varsigma^{-1}\frac{\ddddd L(\varsigma)}{\pi}.
	\end{split}
\end{equation}
With the notation $\varsigma=\rho e^{\ii\theta}\in\Omega_1$ and $\rho e^{\ii\theta_\rho}\in S_1(1+e^{-\ii\pi/4}(0,\epsilon_0))$, it is readily seen that, for any fixed $\xi\in(-1,1)$, there is a positive constant $\tilde\epsilon(\xi)$ such that
\begin{equation}\label{e5.82}
	\begin{split}
		&\sin\theta-\sin\theta_\rho\ge\begin{cases}
			-\tilde\epsilon(\xi)(\theta-\theta_\rho)&\theta\in\left(\frac{\pi}{2},\theta_\rho\right),\\
			\tilde\epsilon(\xi)(\theta-\pi+\theta_\rho)&\theta\in\left(\pi-\theta_\rho,\frac{\pi}{2}\right).
		\end{cases}
	\end{split}
\end{equation}
Especially, ones can choose $\tilde\epsilon(\xi)=\cos\theta_\rho$ when $\xi$ is near $1$.
Utilizing the techniques applied in Proposition \ref{p4.10} and the estimate \eqref{e5.82}, we obtain that the following integral on $\Omega_1$ is dominated by $t^{-\frac{3}{4}}$, that is,
\begin{equation}\label{e5.8}
	\begin{split}
		&\abs{\iint_{\Omega_1}[M^{(2)}_D\tilde{\mathcal{R}}](\varsigma)\varsigma^{-1}\frac{\ddddd L(\varsigma)}{\pi}}\lesssim\iint_{\Omega_1}\abs{\bar\partial R_1(\varsigma)e^{-\phi(\varsigma)}\varsigma^{-1}}\frac{\ddddd L(\varsigma)}{\pi}\\
		&\quad=\int_1^{\rho_0}\frac{e^{-\re\phi(\rho e^{\ii\theta_\rho})}}{2\pi\rho}\ddddd\rho\int_{\pi-\theta_\rho}^{\theta_\rho}\abs{\partial_\theta(r(e^{\ii\theta}))}e^{-\re\phi(\rho e^{\ii\theta})+\re\phi(\rho e^{\ii\theta_\rho})}\ddddd\theta\\
		&\quad\le\norm{r}_{H^1}\int_1^{\rho_0}\frac{ e^{-\re\phi(\rho e^{\ii\theta_\rho})}}{2\pi\rho}\ddddd\rho
		\left(\int_{\frac{\pi}{2}}^{\theta_\rho}e^{2t(\rho-\rho^{-1})(\theta-\theta_\rho)\tilde\epsilon(\xi)}\ddddd\theta+\int_{\pi-\theta_\rho}^{\frac{\pi}{2}}e^{-2t(\rho-\rho^{-1})(\theta-\pi+\theta_\rho)\tilde\epsilon(\xi)}\ddddd\theta\right)^{\frac{1}{2}}\\
		&\quad\lesssim\int_1^{\rho_0}\frac{e^{-t\sqrt{1-\xi^2}(\rho-1)^2}}{2\pi\rho}t^{-\frac{1}{2}}(\rho-\rho^{-1})^{-\frac{1}{2}}\ddddd\rho\lesssim t^{-\frac{3}{4}}.
	\end{split}
\end{equation}
The result of \eqref{e5.8} still holds for $\Omega_2$, $\Omega_3$, and $\Omega_4$, that is,
\begin{align}\label{e5.84}
	\abs{\iint_{\Omega_k}[M^{(2)}_D\tilde{\mathcal{R}}](\varsigma)\varsigma^{-1}\frac{\ddddd L(\varsigma)}{\pi}}\lesssim t^{-\frac{3}{4}},\quad k=1,2,3,4.
\end{align}
Substituting \eqref{e5.84} into \eqref{e5.18}, we obtain that as $t\to+\infty$,
\begin{align}\label{e5.73}
	M^{(2)}_D(0)= I+\oo(t^{-\frac{3}{4}}).
\end{align}

\subsection{Asymptotic formula}

Tracking back to the transformations \eqref{e4.13}, \eqref{e4.25}, \eqref{e5.32}, and \eqref{e4.22}, we obtain that in the neighborhood of $\lambda=0$, $M(\lambda)$ is analytic and
\begin{align}
	M=M^{(2)}_DEM^{(\mathcal{Z})}T^{-\sigma_3}.
\end{align}
Then, it follows by the reconstruction formula \eqref{e2.46} that
\begin{align}\label{e5.74}
	q_{n-1}(t)=T_0\left(M^{(2)}_DEM^{(\mathcal{Z})}\right)_{1,2}(0,n,t),
\end{align}
where
\begin{align}
	T_0=e^{-\int_{\arg S_2}^{\arg S_1}\ln(1+\abs{r(e^{\ii\theta})}^2)\frac{\ddddd \theta}{2\pi}}\prod_{\lambda_l\in \mathcal{Z}_\xi^-}\abs{\lambda_l}^{2\alpha_l}.
\end{align}
In view of the estimates \eqref{e5.46}, \eqref{e5.66}, and \eqref{e5.73}, we obtain the asymptotic formula \eqref{e5.3} for \eqref{e5.74}.

\section{Soliton resolution  in sectors I and III}\label{s6}
The aim of this section is to carry out the analysis in sectors I and III.
In this vein, the stationary phase points are off the unit circle.
Due to this fact, the leading term in the asymptotic formula is determined by solitons only, and we call sectors I and III as the soliton sectors with the remaining constrained by $\oo(t^{-1})$.
Set the scalar function $T(\lambda)$
\begin{align}\label{e6.1}
	T(\lambda)=T(\lambda,\xi)=\begin{cases}
		\prod_{\lambda_{l}\in\mathcal{Z}_\xi^-}\left(\frac{\lambda-\lambda_{l}}{\lambda-\bar\lambda_{l}^{-1}}\right)^{\alpha_{l}}&\text{sector I},\\
		e^{-\oint_{\Sigma}\frac{\ln{1+\abs{r(\varsigma)}^2}}{\varsigma-\lambda}\frac{\ddddd \varsigma}{2\pi\ii}}\prod_{\lambda_{l}\in\mathcal{Z}_\xi^-}\left(\frac{\lambda-\lambda_{l}}{\lambda-\bar\lambda_{l}^{-1}}\right)^{\alpha_{l}}&\text{sector III}.
	\end{cases}
\end{align}
Since the analysis in sectors I and III are fairly similar, we concentrate only on the formulation for the former one.  % when $\xi>1$.

\begin{figure}
	\centering
	\begin{subfigure}{0.35\textwidth}
		\centering\includegraphics{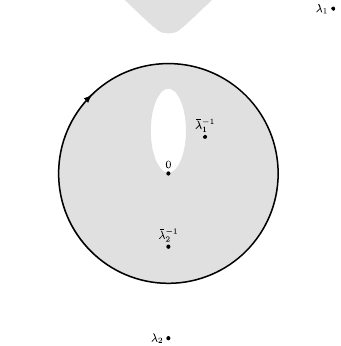}
		\subcaption{}
	\end{subfigure}
	\begin{subfigure}{0.35\textwidth}
		\centering\includegraphics{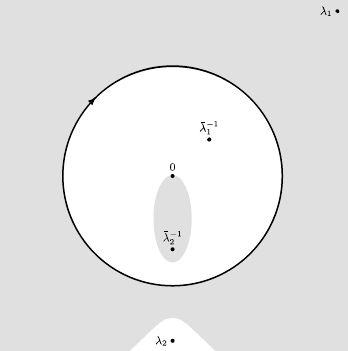}
		\subcaption{}
	\end{subfigure}
	\begin{subfigure}{0.35\textwidth}
		\centering\includegraphics{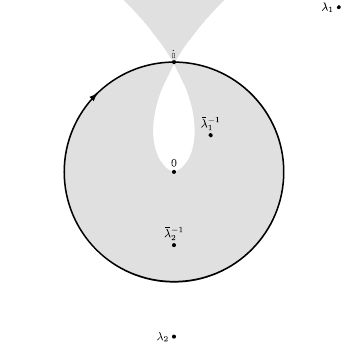}
		\subcaption{}
	\end{subfigure}
	\begin{subfigure}{0.35\textwidth}
		\centering\includegraphics{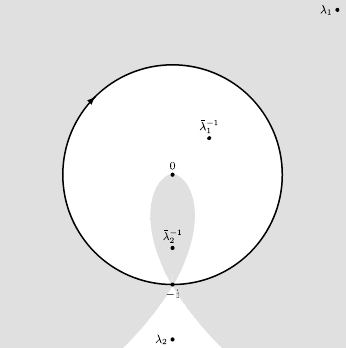}
		\subcaption{}
	\end{subfigure}
	\caption{Signature tables of $\re\phi$ for (A) $\xi<-1$, (B) $\xi>1$, (C) $\xi=-1$, and (D) $\xi=1$, where $\re\phi>0$ in the grayed regions, and $\re\phi<0$ otherwise.}\label{f6}
\end{figure}
\begin{figure}
	\centering
	\includegraphics[width=0.4\linewidth]{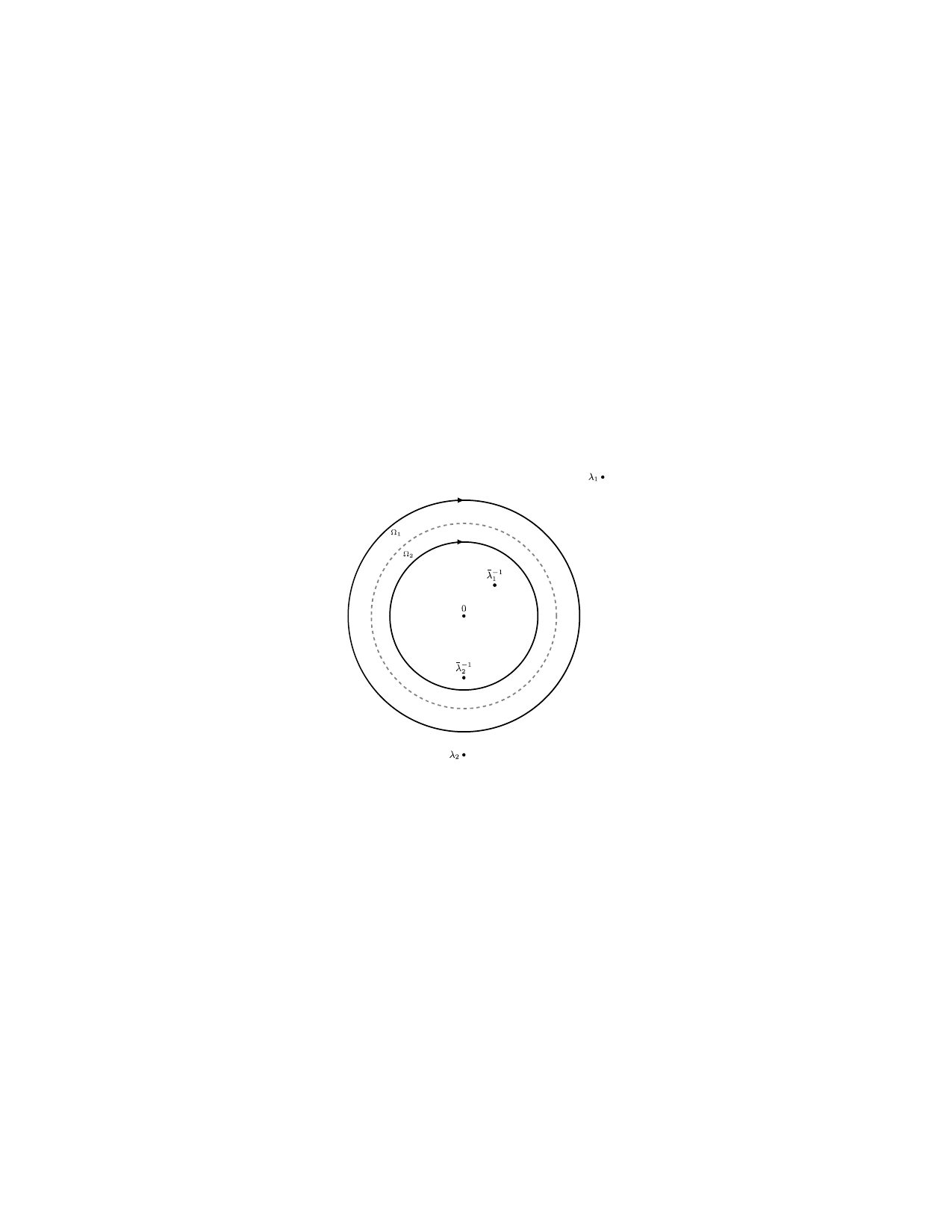}
	\caption{The jump contour $\Sigma^{(2)}=\Sigma_{\epsilon_0}\cup\Sigma_{\epsilon_0}^{-1}$ of $\bar\partial$-RH problem \ref{db13} along with the regions $\Omega_1$ and $\Omega_{2}$.}
	\label{f7}
\end{figure}

\subsection{Transform to a $\bar\partial$-RH problem}
Transform $M(\lambda)$ into
\begin{align}\label{e6.3}
	M^{(1)}(\lambda)=M(\lambda)T^{\sigma_3}(\lambda).
\end{align}
It is clear from RH problem \ref{r1} and \eqref{e6.1} that $M(\lambda)$ admits the RH problem below.
\begin{rhp}\label{r6.1}
	\
	\begin{itemize}
		\item $M^{(1)}(\lambda)$ is meromorphic in $\mathbb{C}\setminus\Sigma$.
		\item As $\lambda\to\infty$, we have
		$M^{(1)}(\lambda)= I+\oo(\lambda^{-1})$
		\item For $\lambda\in\Sigma$, we have $M^{(1)}_+(\lambda)=M^{(1)}_-(\lambda)V^{(1)}(\lambda)$, where
		\begin{align*}
			V^{(1)}(\lambda)=
				\left(\begin{matrix}
					1+\abs{r(\lambda)}^2&\overline{r(\lambda)}T^{-2}(\lambda)e^{\phi(\lambda)}\\r(\lambda)T^2(\lambda)e^{-\phi(\lambda)}&1
				\end{matrix}\right).
		\end{align*}
	\item The residue condition resembles \eqref{e4.17s}, which is expressed in terms of the scalar function
	$T(\lambda)$ in \eqref{e6.1}.
	\end{itemize}
\end{rhp}
Set the positive constant
\begin{align}
	\epsilon_0=\frac{\min\{\abs{\lambda_1}-1,\dots,\abs{\lambda_{l_0}}-1,\sqrt{\xi^2-1}+\xi-1\}}{2}.
\end{align}
In view of the signature table shown in FIGURE \ref{f6} and \eqref{e2.18}, it is clear that
\begin{align}\label{e6.2}
	&\Sigma_{\epsilon_0}=\{\abs{\lambda}=1+\epsilon_0\}\subset\{\re\phi>0\},\quad
	\Sigma_{\epsilon_0}^{-1}\subset\{\re\phi<0\},
	\end{align}
and that there is no discrete spectrum in the region between $\Sigma$ and $\Sigma_{\epsilon_0}$.
Consequently, it follows from the compactness of $\Sigma_{\epsilon_0}\cup\Sigma_{\epsilon_0}^{-1}$ that there is a positive constant $C$ such that
\begin{align}\label{e6.4}
	\inf_{\lambda\in\Sigma_{\epsilon_0}}\re\phi(\lambda,n,t)>Ct,\quad \sup_{\lambda\in\Sigma_{\epsilon_0}^{-1}}\re\phi(\lambda,n,t)<-Ct.
\end{align}
Define
\begin{align*}
	R_1(\lambda)=r(\lambda/\abs{\lambda}),\quad R_2(\lambda)=\overline{R_1(\bar\lambda^{-1})},
\end{align*}
and
\begin{equation}\label{e6.6}
	M^{(2)}(\lambda)=M^{(1)}(\lambda)\mathcal{R}(\lambda),\quad \mathcal{R}(\lambda)=\begin{cases}
		\left(\begin{matrix}
			1&0\\-R_1(\lambda)T^2(\lambda)e^{-\phi(\lambda)}&1
		\end{matrix}\right)&\lambda\in\Omega_1=\{\abs{\lambda}\in(1,1+\epsilon_0)\},\\
		\left(\begin{matrix}
			1&R_2(\lambda)T^{-2}(\lambda)e^{\phi(\lambda)}\\0&1
		\end{matrix}\right)&\lambda\in\Omega_2=\{\abs{\lambda}\in((1+\epsilon_0)^{-1},1)\},\\
		I&otherwise.
	\end{cases}
\end{equation}
It is clear that $M^{(2)}$ uniquely solves the following $\bar\partial$-RH problem, and the jump contour is $\Sigma^{(2)}=\Sigma_{\epsilon_0}\cup\Sigma_{\epsilon_0}^{-1}$ oriented as depicted in FIGURE \ref{f7}.
\begin{drhp}\label{db13}
	\
	\begin{itemize}
		\item $M^{(2)}(\lambda)$ is continuous in $\mathbb{C}\setminus\{\Sigma^{(2)}\cup\mathcal{Z}\cup\overline{\mathcal{Z}^{-1}}\}$.
		\item As $\lambda\to\infty$, we have $M^{(2)}(\lambda)=I+\oo(\lambda^{-1})$.
		\item For $\lambda\in\Sigma^{(2)}$, we have
		\begin{align}
			M^{(2)}_+(\lambda)=M^{(2)}_-(\lambda)V^{(2)}(\lambda),
		\end{align}
		where
		\begin{align}\label{e6.5}
			V^{(2)}(\lambda)=\begin{cases}
				\left(\begin{matrix}
					1&0\\R_1(\lambda)T^2(\lambda)e^{-\phi(\lambda)}&1
				\end{matrix}\right)&\lambda\in\Sigma_{\epsilon_0},\\
				\left(\begin{matrix}
					1&R_2(\lambda)T^{-2}(\lambda)e^{\phi(\lambda)}\\0&1
				\end{matrix}\right)&\lambda\in\Sigma_{\epsilon_0}^{-1}.
			\end{cases}
		\end{align}
		\item The residue condition is the same as that of RH problem \ref{r4.2} with $M^{(1)}(\lambda)$ replaced by $M^{(2)}(\lambda)$.
		\item For $\lambda\in\mathbb{C}$, we have $\bar\partial M^{(2)}(\lambda)=M^{(2)}(\lambda)\bar\partial\mathcal{R}(\lambda)$.
	\end{itemize}
\end{drhp}

\subsection{Factorization of the $\bar\partial$-RH problem}

As we have done in the case $\xi\in(-1,1)$, factorize the solution $M^{(2)}(\lambda)$ into the product of solutions of a $\bar\partial$ problem and an RH problem,
\begin{align}\label{e6.11}
	M^{(2)}(\lambda)=M^{(2)}_D(\lambda)M^{(2)}_{RH}(\lambda).
\end{align}

\begin{rhp}\label{r6.2}
	$M^{(2)}_{RH}(\lambda)$ solves $\bar\partial$-RH problem \ref{db13} with $\bar\partial M^{(2)}_{RH}(\lambda)\equiv0$ over the complex plane $\mathbb{C}$.
\end{rhp}
\begin{dbarproblem}\label{db6.3}
	\
	\begin{itemize}
		\item $M^{(2)}_D(\lambda)$ is continuous in $\mathbb{C}$.
		\item As $\lambda\to\infty$, we have $M^{(2)}_D(\lambda)\sim I+\oo(\lambda^{-1})$.
		\item For $\lambda\in\mathbb{C}$, we have $\bar\partial M^{(2)}_D(\lambda)=M^{(2)}_D(\lambda)\tilde{\mathcal{R}}(\lambda)$, where $\tilde{\mathcal{R}}(\lambda)=M^{(2)}_{RH}(\lambda)\bar\partial\mathcal{R}(\lambda)(M^{(2)}_{RH}(\lambda))^{-1}$.
		
	\end{itemize}
\end{dbarproblem}

\subsubsection{Solvability of $M^{(2)}_{RH}(\lambda)$}
In this part, we discuss the solvability of $M^{(2)}_{RH}(\lambda)$.
$M^{(2)}_{RH}(\lambda)$ is well-defined as that in Section \ref{s5}.
Exactly, with a little abuse of notation, for $\xi>1$, denote $M^{(\mathcal{Z})}(\lambda)=M^{(\mathcal{Z})}(\lambda,n,t)$ as the solution of RH problem \ref{r6.2} with $V^{(2)}(\lambda)\equiv I$, where $(\mathcal{Z},\pp^T)$ is the discrete spectral data.
By the techniques of small norm RH problem theory in Section \ref{s531}, it is clear that for some positive constant $C$,
\begin{align}\label{e6.12}
	M^{(\mathcal{Z})}(0)=(I+\oo(e^{-Ct}))M^{(\mathcal{Z}_\xi)}(0),
\end{align}
where $M^{(\mathcal{Z}_\xi)}(\lambda)=M^{(\mathcal{Z}_\xi)}(\lambda,n,t)$ is defined for $\xi<-1$ as
	\begin{align}
		M^{(\mathcal{Z}_\xi)}(\lambda)=\begin{cases}
			I&\mathcal{Z}_\xi=\emptyset,\\
			M^{(\mathcal{Z}_\xi,\pp^T_\xi)}(\lambda)&\mathcal{Z}_\xi\ne\emptyset,
		\end{cases}\quad\mathcal{Z}_\xi=\mathcal{Z}\cap\{\re\phi=0\},
	\end{align}
and $M^{(\mathcal{Z}_\xi,\pp^T_\xi)}(\lambda)=M^{(\mathcal{Z}_\xi,\pp^T_\xi)}(\lambda,n,t)$ is as defined in Remark \ref{r3.7} with the discrete spectral data $(\mathcal{Z}_\xi,\pp^T_\xi)$ that is the restriction of $(\mathcal{Z},\pp^T)$ on $\mathcal{Z}_\xi$.
Utilizing the techniques in Section \ref{s5}, we obtain that $M^{(2)}_{RH}(\lambda)$ admits the form
\begin{align}\label{e6.14}
	M^{(2)}_{RH}(\lambda)=E(\lambda)M^{(\mathcal{Z})}(\lambda),
\end{align}
where $E(\lambda)$ is the solution of the small-norm RH problem:
\begin{rhp}\label{r6.4}
	\
	\begin{itemize}
		\item $E(\lambda)$ is analytic in $\mathbb{C}\setminus\Sigma^{(2)}$.
		\item As $\lambda\to\infty$, we have $E(\lambda)\sim I+\oo(\lambda^{-1})$.
		\item On $\Sigma^{(2)}$, we have $E_+(\lambda)=E_-(\lambda)V^E(\lambda)$, where $V^E(\lambda)=  M^{(\mathcal{Z})}(\lambda)V^{(2)}(\lambda)\left(  M^{(\mathcal{Z})}(\lambda)\right)^{-1}$.
	\end{itemize}
\end{rhp}
\noindent By the boundedness of $  M^{(\mathcal{Z})}(\lambda)$ and $\left(  M^{(\mathcal{Z})}(\lambda)\right)^{-1}$ on $\Sigma^{(2)}$, recalling \eqref{e6.4} and \eqref{e6.5}, ones obtain that as $t\to+\infty$,
\begin{align}\label{e6.13}
	\norm{V^E-I}_{L^\infty(\Sigma^{(2)})}\lesssim e^{-Ct}.
\end{align}
With the small norm theory for RH problems, it is readily seen from \eqref{e6.13} that RH problem \ref{r6.4} is uniquely solved and as $t\to+\infty$,
\begin{align}\label{e6.16}
	E(0)=I+\oo(e^{-Ct}).
\end{align}

Thus, we have proved the unique solvability of $M^{(2)}_{RH}$, and by \eqref{e6.14}, it is also invertible.
It follows that $M^{(2)}_D$ is also well-defined
\begin{align}
	M^{(2)}_D=M^{(2)}\left(M^{(2)}_{RH}\right)^{-1},
\end{align}
and in the following we discuss the solution $M^{(2)}_D$.

\subsubsection{Estimate on the $\bar\partial$ problem}
In this part, we concentrate on some estimates of $M^{(2)}_D(\lambda)$.
We assert that when $t\to+\infty$, $M^{(2)}_D(0)$ decays to the identity matrix like that it have done for $\xi\in(-1,1)$, but the decaying rate is $\oo(t^{-1})$, that is,
\begin{align}\label{e6.19}
	M^{(2)}_D(0)= I+\oo(t^{-1}).
\end{align}
$\bar\partial$ problem \ref{db6.3} is equivalent to the identity,
\begin{align}\label{e6.20}
	M^{(2)}_D(\lambda)=((\identity-S)^{-1}I)(\lambda),\quad Sf(\lambda)=\iint_{\mathbb{C}}\frac{[f\tilde{\mathcal{R}}](\varsigma)}{\varsigma-\lambda}\frac{\ddddd L(\varsigma)}{\pi}.
\end{align}
For the integral operator $S$, we also have the proposition.
\begin{proposition}\label{p6.10}
	Define the integral operator $S$ as \eqref{e6.20}.
	If $r\in H^1$, then $S$ is bounded on $L^\infty(\mathbb{C})$, and as $t\gg0$, the bound is dominated by $t^{-\frac{1}{2}}$, that is,
	\begin{align}
		\norm{S}_{L^\infty_{op}(\mathbb{C})}\lesssim t^{-\frac{1}{2}}.
	\end{align}
\end{proposition}
\begin{proof}
	For $\lambda\ne0$ and $f\in L^\infty(\mathbb{C})$, by the boundedness of $M^{(\mathcal{Z})}(\lambda)$, $(M^{(\mathcal{Z})}(\lambda))^{-1}$, and $T(\lambda)$ on $\Sigma^{(2)}$, it is similar to the estimates in \eqref{e4.54} and \eqref{e5.8} that
\begin{equation}\label{e6.23}
\begin{split}
\abs{\iint_{\Omega_1}\frac{[f\tilde{\mathcal{R}}](\varsigma)}{\varsigma-\lambda}\frac{\ddddd \varsigma}{\pi}}&\lesssim\norm{f}_{L^\infty(\mathbb{C})}\iint_{\Omega_1}\frac{\abs{\bar\partial R_1e^{-\phi}}(\varsigma)}{\abs{\varsigma-\lambda}}\frac{\ddddd L(\varsigma)}{\pi}\\
&=\norm{f}_{L^\infty(\mathbb{C})}\int_{1}^{1+\epsilon_0}\int_{-\pi}^{\pi}\frac{\abs{\partial_\theta \left(r(e^{\ii\theta})\right)}e^{-t\left((\rho-\rho^{-1})\sin\theta+2\xi\ln\rho\right)}}{2\pi\abs{\rho e^{\ii\theta}-\lambda}}\ddddd\theta\ddddd\rho\\
&\le\norm{f}_{L^\infty(\mathbb{C})}\norm{r}_{H^1}\int_{1}^{1+\epsilon_0}e^{-t\left((\rho-\rho^{-1})+2\xi\ln\rho\right)}\frac{\ddddd\rho}{2\pi}\left(\int_{-\pi}^{\pi}\abs{\rho e^{\ii\theta}-\lambda}^{-2}\ddddd\theta\right)^{\frac{1}{2}}\\
&\lesssim\norm{f}_{L^\infty(\mathbb{C})}\int_{1}^{1+\epsilon_0}\frac{e^{-2t(\rho-1)\left(\xi\frac{\ln(1+\epsilon_0)}{\epsilon_0}+\frac{1}{2}\right)}}{\abs{\rho-\abs{\lambda}}^{\frac{1}{2}}}\ddddd\rho\lesssim t^{-\frac{1}{2}}\norm{f}_{L^\infty(\mathbb{C})}.
\end{split}
\end{equation}
	and similarly,
	\begin{align}
		\abs{\iint_{\Omega_2}\frac{[f\tilde{\mathcal{R}}](\varsigma)}{\varsigma-\lambda}\frac{\ddddd \varsigma}{\pi}}\lesssim t^{-\frac{1}{2}}\norm{f}_{L^\infty(\mathbb{C})},
	\end{align}
	thus, we obtain that
	\begin{align}\label{e6.21}
		\norm{S}_{L^\infty_{op}}\lesssim t^{-\frac{1}{2}},
	\end{align}
	that is, the integral operator is bounded on $L^\infty(\mathbb{C})$ and the bound decays to zero for $t\gg0$.
\end{proof}

Now, we focus on the estimate of $M^{(2)}_D(0)$,
\begin{equation}\label{e6.28}
	\begin{split}
		M^{(2)}_D(0)=I+\iint_{\mathbb{C}}[M^{(2)}_D\tilde{\mathcal{R}}](\varsigma)\varsigma^{-1}\frac{\ddddd L(\varsigma)}{\pi}.
	\end{split}
\end{equation}
Taking $f(\varsigma)=M^{(2)}_D(\varsigma)$ and $\lambda=0$ in \eqref{e6.23}, we obtain that
\begin{align}
	\iint_{\Omega_1}[M^{(2)}_D\tilde{\mathcal{R}}](\varsigma)\varsigma^{-1}\frac{\ddddd L(\varsigma)}{\pi}\lesssim t^{-1},
\end{align}
and utilizing the same techniques, we similarly estimate the integral on $\mathbb{C}$
\begin{align}
	\iint_{C}[M^{(2)}_D\tilde{\mathcal{R}}](\varsigma)\varsigma^{-1}\frac{\ddddd L(\varsigma)}{\pi}\lesssim t^{-1},
\end{align}
which means that
\begin{align}\label{e6.26}
	M^{(2)}_D(0)=I+\oo(t^{-1}).
\end{align}

\subsection{Asymptotic formula}
Tracking back to the transformations \eqref{e6.3}, \eqref{e6.6}, \eqref{e6.11}, and \eqref{e6.14}, it is readily seen that in the neighborhood of $\lambda=0$, $M(\lambda)$ is analytic and reads
\begin{align}
	M=M^{(2)}_DEM^{(\xi)}T^{-\sigma_3}.
\end{align}
Then, it follows by the reconstruction formula \eqref{e2.46}  that
\begin{align}\label{e6.22}
	q_n(t)=T_0\left(M^{(2)}_DEM^{(\xi)}\right)_{1,2}(0,n+1,t),
\end{align}
where
\begin{align*}
	T_0=
		\prod_{\lambda_l\in\mathcal{Z^-_\xi}}\abs{\lambda_l}^{2\alpha_l}.
\end{align*}
In view of \eqref{e6.12}, \eqref{e6.16}, and \eqref{e6.26}, we obtain the asymptotic formula \eqref{e1.15} for \eqref{e6.22} in sector I.
Similarly, we obtain the asymptotic formula in sector II with
\begin{align}
	T_0=e^{-\int_{0}^{2\pi}\ln(1+\abs{r(e^{\ii\theta})}^2)\frac{\ddddd\theta}{2\pi}}\prod_{\lambda_l\in\mathcal{Z^-_\xi}}\abs{\lambda_l}^{2\alpha_l}.
\end{align}

\section{Painlev\'e-type asymptotics in 1st and 2nd transition zones}\label{s7}

The aim of this section is to carry out the Painlev\'e-type asymptotics  in 1st and 2nd transition zones.
In this vein, the stationary phase points are fairly near $\pm\ii$, that is, as $\xi \to \pm 1$, we have
\begin{align*}
	S_1, S_2 \to  \ii,\, \text{as} \ \xi \to -1;\quad S_1, S_2 \to  -\ii,\, \text{as} \ \xi \to 1.
\end{align*}
Due to this fact, the leading term includes the solitons and the Painlev\'e-type asymptotics, and the remaining is constrained by $\oo(t^{-\frac{1}{2}})$.
The corresponding scalar function $T(\lambda)$ reads
\begin{align}\label{T}
	T(\lambda)=T(\lambda,\xi)=\begin{cases}
		\prod_{\lambda_{l}\in\mathcal{Z}_\xi^-}\left(\frac{\lambda-\lambda_{l}}{\lambda-\bar\lambda_{l}^{-1}}\right)^{\alpha_{l}}&\text{1st transition zone},\\
	e^{-\oint_{\Sigma}\frac{\ln{1+\abs{r(\varsigma)}^2}}{\varsigma-\lambda}\frac{\ddddd s}{2\pi \ii}}	\prod_{\lambda_{l}\in\mathcal{Z}_\xi^-}\left(\frac{\lambda-\lambda_{l}}{\lambda-\bar\lambda_{l}^{-1}}\right)^{\alpha_{l}}	&\text{2nd transition zone}.
	\end{cases}
\end{align}
Since the analysis in 1st transition zone and 2nd transition zone are similar, we focus only on formulating the former one.

\subsection{Transform to a $\bar\partial$-RH problem}
Following the similar analysis in Subsection \ref{s1m1}, we take a transformation:%这是的T的上标-号已经改了，后面的还没改
\begin{align}\label{t1}
	M^{(1)}(\lambda)=M(\lambda)T^{\sigma_3}(\lambda),
\end{align}
which is the solution for the following RH problem.
\begin{rhp}\label{m1}
	\
	\begin{itemize}
		\item $M^{(1)}(\lambda)$ is meromorphic in $\mathbb{C}\setminus\Sigma$.
		\item As $\lambda\to\infty$, we have $M^{(1)}(\lambda)= I+\oo(\lambda^{-1})$.
		\item For $\lambda\in\Sigma$,  we have $M^{(1)}_+(\lambda)=M^{(1)}_-(\lambda)V^{(1)}(\lambda),$
		where
		\begin{align*}
			V^{(1)}(\lambda)=
			\left(\begin{matrix}
				1+\abs{r(\lambda)}^2&\bar{r}(\lambda)T^{-2}(\lambda)e^{\phi(\lambda)}\\r(\lambda)T^{2}(\lambda)e^{-\phi(\lambda)}&1
			\end{matrix}\right).
		\end{align*}
		\item The residue condition resembles \eqref{e4.17s}, which is expressed in terms of the scalar function
		$T(\lambda)$ in \eqref{T}.
		
	\end{itemize}
\end{rhp}

To obtain a mixed $\bar\partial$-RH problem,  we introduce the continuous function
\begin{align}\label{R1}
	&R_1(\lambda)=
	r(\lambda/\abs{\lambda}),\quad R_2(\lambda)=\overline{R_1(\bar\lambda^{-1})},
\end{align}
and transform $M^{(1)}(\lambda)$ into
\begin{align}\label{m2m1}
	M^{(2)}(\lambda)=M^{(1)}(\lambda)\mathcal{R}(\lambda),\quad\mathcal{R}(\lambda)=\begin{cases}
		\left(\begin{matrix}
			1&0\\-R_1(\lambda)T^{2}(\lambda)e^{-\phi(\lambda)}&1
		\end{matrix}\right)&\lambda\in\Omega_1, \\
		\left(\begin{matrix}
			1&R_2(\lambda)T^{-2}(\lambda)e^{\phi(\lambda)}\\0&1
		\end{matrix}\right)&\lambda\in\Omega_2, \\
		I&\text{elsewhere}.\\
	\end{cases}
\end{align}
Then, it is easily seen from RH problem \ref{m1} and \eqref{m2m1} that $M^{(2)}(\lambda)$ satisfies the following $\bar\partial$-RH problem with the jump contour depicted in FIGURE \ref{fs2}.

\begin{figure}
	\centering\includegraphics[width=0.4\textwidth]{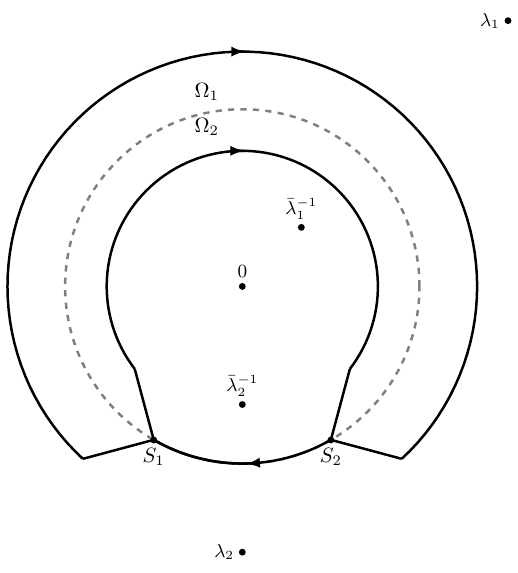}
	\caption{The jump contour $\Sigma^{(2)}=\Sigma^{(2)}_1\cup\Sigma^{(2)}_2$ and regions $\Omega_1$ and $\Omega_2$, where $\Sigma^{(2)}_1=\partial\Omega_1\cap D_+$  and $\Sigma^{(2)}_2=\partial\Omega_2\cap D_-$.}\label{fs2}
\end{figure}

\begin{drhp}\label{pm2}
	\
	\begin{itemize}
		\item $M^{(2)}(\lambda)$ is continuous in $\mathbb{C}\setminus\{\Sigma^{(2)}\cup\mathcal{Z}\cup\overline{\mathcal{Z}^{-1}}\}$.
		\item As $\lambda\to\infty$, we have $	M^{(2)}(\lambda)= I+\oo(\lambda^{-1})$.
		\item For $\lambda \in \Sigma^{(2)}$, we have $M^{(2)}_+(\lambda)=M^{(2)}_-(\lambda)V^{(2)}(\lambda)$,
		where
		\begin{align}\label{pv2}
			V^{(2)}(\lambda)=\begin{cases}
				\left(\begin{matrix}
					1&0\\R_1(\lambda)T^{2}(\lambda)e^{-\phi(\lambda)}&1
				\end{matrix}\right)&\lambda\in\Sigma^{(2)}_1, \\
				\left(\begin{matrix}
					1&R_2(\lambda)T^{-2}(\lambda)e^{\phi(\lambda)}\\0&1
				\end{matrix}\right)&\lambda\in\Sigma^{(2)}_2.		
			\end{cases}
		\end{align}
		\item  The residue condition is the same as that of RH problem \ref{m1} with $M^{(1)}(\lambda)$ replaced by $M^{(2)}(\lambda)$.
		\item  For $\lambda \in \mathbb{C}$,  we  have the $\bar\partial$-derivative relation $\bar\partial M^{(2)}(\lambda)=M^{(2)}(\lambda)\bar\partial\mathcal{R}(\lambda)$, where
		\begin{align}
			\bar\partial \mathcal{R}(\lambda)=\begin{cases}
				\left(\begin{matrix}
					0&0\\-\bar\partial R_1(\lambda)T^{2}(\lambda)e^{-\phi(\lambda)}&0
				\end{matrix}\right)&\lambda\in\Omega_1, \\
				\left(\begin{matrix}
					0&\bar\partial R_2(\lambda)T^{-2}(\lambda)e^{\phi(\lambda)}\\0&0
				\end{matrix}\right)&\lambda\in\Omega_2, \\
				\mathbf{0}& \text{elsewhere},\\
			\end{cases}
		\end{align}
		
	\end{itemize}
\end{drhp}

Rewriting $\lambda\in\mathbb{C}$ in the polar coordinates: $\lambda=\rho e^{\ii\theta}$ and
utilizing the formula $\bar\partial=e^{i\theta}(\partial_\rho+\ii\rho^{-1}\partial_\theta)/2$, we have
\begin{align}\label{pr1}
	\bar\partial R_1(\lambda)=
	\frac{\ii}{2}\rho^{-1}e^{\ii\theta}\partial_\theta \left(r(e^{\ii\theta})\right),
\end{align}
which can be readily seen that the $L^1$-norm of $\bar\partial R_1(\lambda)$ on any circle $\{\lambda: \abs{\lambda}=C\, \text{with}\, C>0\}$ is well dominated by the norm of $r\in H^1$.%贺定义

\subsection{Factorization of the $\bar\partial$-RH problem}
The above $\bar\partial$-RH problem can be decomposed into a pure RH problem and a pure  $\bar\partial$-problem, that is,
\begin{align}\label{m2mdmrh}
	M^{(2)}(\lambda)=M^{(2)}_D(\lambda)M^{(2)}_{RH}(\lambda).
\end{align}

\begin{rhp}
	$M^{(2)}_{RH}(\lambda)$ solves the $\bar\partial$-RH problem \ref{pm2} with $\bar\partial\mathcal{R}\equiv 0$.
\end{rhp}
\begin{dbarproblem}\label{m2d}
	\
	\begin{itemize}
		\item $M^{(2)}_{D}(\lambda)$ is continuous in $\mathbb{C}$.
		\item As $z\to \infty$, we have $M^{(2)}_{D}(\lambda)=I+\oo(\lambda^{-1})$.
		\item For $\lambda\in\mathbb{C}$, we have $M^{(2)}_{D}(\lambda)$ satisfies $\bar\partial M^{(2)}_D(\lambda)=M^{(2)}_D(\lambda)\tilde{\mathcal{R}}(\lambda)$, where $\tilde{\mathcal{R}}(\lambda)=M^{(2)}_{RH}(\lambda)\bar\partial\mathcal{R}(\lambda)(M^{(2)}_{RH}(\lambda))^{-1}$.
		
	\end{itemize}
	
\end{dbarproblem}

\subsubsection{Solvability of $M^{(2)}_{RH}(\lambda)$}

In this part, we discuss the solvability of $M^{(2)}_{RH}(\lambda)$.
Let $U = \{\lambda: |\lambda+\ii| \le c_0 \}$
be a small disk around $\lambda =\ii$,  where
\begin{align}\label{c0}
	c_0 := \min \{\frac{1}{2}, 2 |S_j+\ii|t^{\frac{1}{30}} \}.
\end{align}
We intend to construct the solution $M^{(2)}_{RH}(\lambda)$ in the following form
\begin{align}\label{pm2rh}
	M^{(2)}_{RH}(\lambda)=\begin{cases}
		E(\lambda)M^{(\mathcal{Z})}(\lambda)&\lambda\in\mathbb{C}\setminus  U,\\
		E(\lambda)M^{(\mathcal{Z})}(\lambda)M^{(U)}(\lambda)&\lambda\in  U,
	\end{cases}
\end{align}
where $M^{(\mathcal{Z})}(\lambda)$ is the solution of RH problem \ref{r5.6}, $M^{(U)}(\lambda)$ solves RH problem \ref{pmu}, and $E(\lambda)$ admits the RH problem \ref{E}.

\begin{rhp}\label{pmu}
	\
	\begin{itemize}
		\item $M^{(U)}(\lambda)$ is analytic in $\mathbb{C}\setminus\Sigma^{(U)}$, where $\Sigma^{(U)} =\bigcup_{j=1}^4\Sigma^{(U)}_j$ with $$\Sigma^{(U)}_1=S_1(1+e^{\frac{\ii\pi}{4}}), \
		\Sigma^{(U)}_2=S_2(1+e^{\ii(\pi-\frac{\pi}{4})}),\
		\Sigma^{(U)}_3=S_1(1+e^{-\frac{\ii\pi}{4}}),\  \Sigma^{(U)}_4=S_2(1+e^{-\ii(\pi-\frac{\pi}{4})}).$$
		\item As $\lambda\to\infty$, we have $M^{(U)}(\lambda)= I+\oo(\lambda^{-1})$.
		\item For $\lambda \in \Sigma^{(U)}$, we have  $M^{(U)}_+(\lambda)=M^{(U)}_-(\lambda)V^{(U)}(\lambda)$,
		where
		\begin{align}\label{pmmu}
			&V^{(U)}(\lambda)=\begin{cases}
				\left(\begin{matrix}
					1&0\\r(-\ii) T^{2}(-\ii) e^{-\phi(-\ii)+\tilde{\phi}(\lambda)}&1
				\end{matrix}\right)&\lambda\in\Sigma^{(U)}_j, \ j=1,3,\\
				\left(\begin{matrix}
					1&\overline{r(-\ii)} T^{-2}(-\ii) e^{\phi(-\ii)-\tilde{\phi}(\lambda)}\\0&1
				\end{matrix}\right)&\lambda\in\Sigma^{(U)}_{j+2},\ j =2,4,
			\end{cases}
		\end{align}
		with
		\begin{align}\label{tildephi}		
			\tilde{\phi}	(\lambda)=\tilde\phi(\lambda,n,t)=
			-2 \ii t (\xi-1) (\lambda+\ii) -\frac{\ii}{3}t(\lambda+\ii)^3.
		\end{align}
	\end{itemize}
\end{rhp}

\begin{rhp}\label{E}
	\
	\begin{itemize}
		\item $E(\lambda)$ is analytic for $\lambda \in \mathbb{C}\setminus\Sigma^E$, where
		$\Sigma^E=\partial U\cup\Sigma^{(2)}$.
		\item As $\lambda\to\infty$, $E(\lambda)= I+\oo(\lambda^{-1})$.
		\item For  $\lambda \in \Sigma^E$, we have $	E_+(\lambda)=E_-(\lambda)V^E(\lambda)$,
		where
		\begin{align}\label{ve}
			V^E(\lambda)=\begin{cases}
				M^{(\mathcal{Z})}(\lambda)V^{(2)}(\lambda)(M^{(\mathcal{Z})}(\lambda))^{-1}&\lambda\in\Sigma^{(2)}\setminus U,\\
				M^{(\mathcal{Z})}(\lambda)M^{(U)}(\lambda)V^{(2)}(\lambda)(V^{(U)}(\lambda))^{-1}(M^{(\mathcal{Z})}(\lambda)M^{(U)}(\lambda))^{-1}&\lambda\in\Sigma^{(2)}\cap U,\\
				M^{(\mathcal{Z})}(\lambda)M^{(U)}(\lambda)(M^{(\mathcal{Z})}(\lambda))^{-1}&\lambda\in\partial U.
			\end{cases}
		\end{align}
	\end{itemize}
\end{rhp}

From \eqref{pm2rh}, in order to obtain the solution $M^{(2)}_{RH}(\lambda)$, we need to construct the solution  $	M^{(\mathcal{Z})}(\lambda)$, $M^{(U)}(\lambda)$ and $E(\lambda)$.
Referring back to the analysis in Section \ref{s531}, we observe that the solution  $M^{(\mathcal{Z})}(\lambda)$ exists and is well-defined for $t\gg0$.

To construct the corresponding RH problem for $M^{(U)}(\lambda)$,   we observe that for $\lambda\in  U$ and $t$ large enough,
\begin{align}\label{tiks}
	\tilde{\phi}(\lambda) =  \frac{8}{3} \ii k^3 + 2 \ii s k,
\end{align}
where
\begin{align} \label{ss}
s =
-2  (\xi-1)t^{\frac{2}{3}}
\end{align}
parameterizes the space-time sectors, and
\begin{align} \label{k}
k=- \frac{1}{2 }t^{\frac{1}{3}} (\lambda-\ii)
\end{align}
is a scaled spectral parameter.  \eqref{tiks} implies us to work in the $k$-plane.
Then,  RH problem \ref{pmu} can be given by the Painlev\'{e} \uppercase\expandafter{\romannumeral2} RH model in Appendix \ref{appx}
\begin{align}
	M^{(U)}(k)=
	e^{-\ii (\frac{\varphi}{2}-\frac{\pi}{4})\sigma_3} M^P(k;|r( -\ii )|)e^{\ii (\frac{\varphi}{2}-\frac{\pi}{4})\sigma_3},
\end{align}
where
\begin{align}
	&\varphi=
\ii	\phi(-\ii ) +\arg  r(-\ii) +2  \alpha_{l} \sum_{\lambda_{l}\in\mathcal{Z}_\xi^-} \arg(\ii+\lambda_{l}),\label{varphi}
\end{align}
and $M^P$ is the solution of RH problem \ref{1modp2}. By \eqref{posee}, as $k \to \infty$, we have
\begin{equation*}
	M^{(U)}(k) = I + \frac{M_1^{(U)}}{k} + \oo(k^{-2}),
\end{equation*}
where
\begin{align}\label{m1u}
	M_1^{(U)} =
   \left(\begin{array}{ccc} 	-\frac{\ii}{2} \int_s^\infty v^2(\varsigma) \ddddd  \varsigma & \frac{v(s)}{2}e^{-\ii \varphi} \\ \frac{v(s)}{2}e^{\ii \varphi} &	\frac{\ii}{2} \int_s^\infty v^2(\varsigma) \ddddd \varsigma \end{array}  \right).
\end{align}

Based on the solvability of $M^{(\mathcal{Z})}(\lambda)$ and $M^{(U)}(\lambda)$, we come to the error part $E(\lambda)$.
From \eqref{pv2} and the signature table of $\re \phi(\lambda)$ in Figure \ref{f6}, it is readily seen that for some positive constant $\epsilon$
\begin{align}\label{vout}
	\abs{V^{(2)}(\lambda)-I}\le e^{-\epsilon t},\quad \lambda\in\Sigma^{(2)}\setminus U.
\end{align}
For $\lambda \in \Sigma^{(2)}_1\cap U$, by \eqref{pm2} and \eqref{pmmu}, we have
\begin{align}\label{v2vu}
	\abs{V^{(2)}(V^{(U)})^{-1}-I}\lesssim\abs{V^{(2)}-V^{(U)}} \lesssim \abs{R_1 T^{2} e^{-\phi} - r(-\ii)T^{2}(-\ii)e^{-\phi(-\ii)+\tilde{\phi}}  )}.
\end{align}
Then, after some calculations, we obtain
\begin{align}\label{pestr}
	\abs{R_1 T^{2} e^{-\phi} - r(-\ii)T^{2}(-\ii)e^{-\phi(-\ii)+\tilde{\phi}} )} \lesssim \abs{R_1- r(-\ii)} + \abs{e^{-\phi}-e^{-\phi(-\ii)+\tilde{\phi}}}.
\end{align}
To proceed, we observe that for $\lambda \in U$ and $t$ large enough,
\begin{align}\label{pexpphi}
	\phi = \phi(-\ii) -\tilde{\phi}+ \oo (t(\lambda+\ii)^4)=
	\phi(-\ii) -\frac{8}{3} \ii k^3 - 2 \ii s k +\oo(t^{-\frac{1}{3}}k^4 ),
\end{align}
where $s$ and $k$  are defined in \eqref{ss}-\eqref{k}.
The expansion of  $\phi$ in \eqref{pexpphi} invokes us to work in the $k$-plane. Under the change of variable \eqref{k}, it is easily seen that the jump line under the variable $k$ is
\begin{align}
	\tilde{\Sigma} := \bigcup_{j=1}^4\tilde{\Sigma}_j \cup \wideparen{k_1k_2},
\end{align}
where $k_j=-\frac{1}{2} t^{\frac{1}{3}} (S_j+\ii) $, $\wideparen{k_1k_2} = \{ k: 	k=
-\frac{1}{2} t^{\frac{1}{3}} (\lambda+\ii), \ \lambda \in  \wideparen{S_1S_2} \}$, and
\begin{align*}
	&\tilde{\Sigma}_1 = \left\{k: k=k_1(1+l e^{\frac{\ii\pi}{4}}), 0 \le l \le c_0 \left( \frac{ t}{8}\right)^{\frac{1}{3}} \right\},  \tilde{\Sigma}_2 = \left\{k: k=k_2(1+l e^{\ii(\pi-\frac{\pi}{4})}), 0 \le l \le c_0 \left( \frac{ t}{8}\right)^{\frac{1}{3}} \right\},\\
	& \tilde{\Sigma}_3 = \left\{k: k=k_1(1+l e^{-\ii(\pi-\frac{\pi}{4})}), 0 \le l \le c_0 \left( \frac{ t}{8}\right)^{\frac{1}{3}} \right\},  \tilde{\Sigma}_4 = \left\{k: k=k_2(1+l e^{-\ii\frac{\pi}{4}}), 0 \le l \le c_0 \left( \frac{ t}{8}\right)^{\frac{1}{3}} \right\}.
\end{align*}

%First, from \eqref{pexpphi} and \eqref{k},
%take
%\begin{align}\label{lambda}
%	\lambda=
%	S_j+\left( \frac{\ii t}{8}\right)^{-\frac{1}{3}} k, \quad j=1,2.
%\end{align}
Then, recalling that $r \in H^1$,
by \eqref{R1} and \eqref{tildephi}, for $k \in \tilde{\Sigma}_1$, we have
\begin{align}\label{R1r}
	\left|R_1\left(-\frac{1}{2}t^{-\frac{1}{3}}k+\ii\right) - r(-\ii)\right|  \lesssim \left|-\frac{1}{2}t^{-\frac{1}{3}} k\right|^{\frac{1}{2}} \lesssim t^{-\frac{1}{6}}\abs{k}^{\frac{1}{2}}.
\end{align}
Second, from  \eqref{pexpphi} and \eqref{k}, it follows that
\begin{align}\label{eee}
	\abs{e^{-\phi}-e^{-\phi(-\ii)+\tilde{\phi}}} \lesssim \abs{e^{\oo(t^{-1/3} k^4)}-1} \lesssim t^{-\frac{1}{3}}|k|^4.
\end{align}
Finally, since \eqref{c0}  holds,  we have
$$|k| \lesssim t^{\frac{1}{30}},$$
which together with \eqref{pestr},\eqref{R1r} and \eqref{eee}, gives us
\begin{equation}
	\abs{R_1 T^{2} e^{-\phi} - r(-\ii)T^{2}(-\ii)e^{-\phi(-\ii)+\tilde{\phi}} )} \lesssim t^{-\frac{3}{20}}.
\end{equation}
The similar estimates hold for the others jump line $\tilde{\Sigma}_j,\ j=2,3,4$, and $\wideparen{k_1k_2}$.
Thus, combining the above estimates with \eqref{v2vu}, it follows that
\begin{align}\label{v2vue}
	\abs{V^{(2)}(V^{(U)})^{-1}-I}\lesssim  t^{-\frac{3}{20}}.
\end{align}

Moreover,
using \eqref{stanp}, it implies that
\begin{align}\label{muu}
	\abs{(M^{(U)})^{-1}-I}\lesssim t^{-\frac{1}{30}},\quad \lambda\in\partial U.
\end{align}
Considering \eqref{ve}, \eqref{vout}, \eqref{v2vue} and \eqref{muu}, by the boundedness of $M^{(\mathcal{Z})}$ on $\Sigma^E$, we conclude that
\begin{align}
	\abs{V^E-I}\lesssim t^{-\frac{1}{30}}.
\end{align}
It then follows from the small-norm RH problem theory that there exists a unique solution to RH problem \ref{E} for large positive $t$. Moreover, according to \cite{beals1998scattering}, we have
\begin{align}\label{El}
	E(\lambda)=I+\int_{\Sigma^E}\frac{ w(\varsigma)(V^E(\varsigma)-I)}{\varsigma-\lambda}\frac{\ddddd \varsigma}{2\pi \ii},
\end{align}
where $w \in I + L^2(\Sigma^E)$ is the unique solution of the Fredholm-type equation
\begin{align}\label{w}
	w = I + \mathcal{C}_E w.
\end{align}
Here, $\mathcal{C}_E: L^2(\Sigma^E) \to L^2(\Sigma^E)$ is an integral operator defined by
$\mathcal{C}_E(f)(\lambda) = \mathcal{C}_- (f(V^E(\lambda)-I))$ with $\mathcal{C}_-$ being the Cauchy projection operator on $\Sigma^E$. Thus,
\begin{align}
	||\mathcal{C}_E||_{L^2_{op}(\Sigma^E)}\le ||\mathcal{C}_-||_{L^2_{op}(\Sigma^E)} ||V^E-I||_{L^\infty(\Sigma^E)} \lesssim t^{-\frac{1}{30}},
\end{align}
which implies that $\identity-\mathcal{C}_E$ is invertible for sufficiently large $t$. Thus, $w$ exists uniquely with
\begin{equation*}
	w = I + (\identity-\mathcal{C}_E)^{-1}( \mathcal{C}_E I).
\end{equation*}
On the other hand,  \eqref{w} can be rewritten as
\begin{equation*}
	w = I + \sum_{j=1}^4 \mathcal{C}_E^j I + (\identity-\mathcal{C}_E)^{-1} ( \mathcal{C}_E^5 I),
\end{equation*}
where for $j=1,\cdots,4$, we have the estimates
\begin{align}
	|| \mathcal{C}_E^j I ||_{L^2(\Sigma^E)} \lesssim t^{-(\frac{3}{20}+\frac{j}{30})}, \quad ||w-I - \sum_{j=1}^4 \mathcal{C}_E^j I ||_{L^2(\Sigma^E)} \lesssim t^{-(\frac{1}{6}+\frac{3}{20})}.
\end{align}
For later use, we focus on the estimate of $E(0)$. Substituting $\lambda=0$ into \eqref{El}, we have
\begin{align*}
	E(0)
	&= I + \oint_{\partial U}\frac{ w(\varsigma)(V^E(\varsigma)-I)}{\varsigma}\frac{\ddddd \varsigma}{2\pi \ii}+\int_{\Sigma^{(2)}}\frac{ w(\varsigma)(V^E(\varsigma)-I)}{\varsigma}\frac{\ddddd \varsigma}{2\pi \ii} \\
	&= I + \oint_{\partial U}\frac{V^E(\varsigma)-I}{\varsigma}\frac{\ddddd\varsigma}{2\pi \ii} +\int_{\Sigma^{(2)}}\frac{ V^E(\varsigma)-I}{\varsigma}\frac{\ddddd \varsigma}{2\pi \ii}+ \int_{\Sigma^E}\frac{(w(\varsigma)-I)(V^E(\varsigma)-I)}{\varsigma}\frac{\ddddd \varsigma}{2\pi \ii}\\
	&= I +  \oint_{\partial U}\frac{V^E(\varsigma)-I}{\varsigma}\frac{\ddddd \varsigma}{2\pi \ii} + \oo(t^{-\frac{1}{2}}).
\end{align*}
Substituting \eqref{m1u} into the above formula, we obtain
\begin{align}\label{e0}
	E(0) =
	I - 2 t^{-\frac{1}{3}}M^{(\mathcal{Z})}(-\ii) M_1^{(U)} M^{(\mathcal{Z})}(-\ii)^{-1}+ \oo (t^{  -\frac{1}{2}}).
\end{align}

Then, from \eqref{m2mdmrh}, we consider the function $M_D^{(2)}$.

\subsubsection{Estimate on the $\bar\partial$ problem}
Since we have already established the existence of $M^{(\mathcal{Z})}(\lambda)$, $M^{(U)}(\lambda)$ and $E(\lambda)$, the existence of  $M^{(2)}_{RH}(\lambda)$ follows naturally. In this part, we concentrate on  $M^{(2)}_D(\lambda)$.
The solution of this pure $\bar\partial$ problem \ref{m2d}  can be expressed in terms of
the integral equation
\begin{align}\label{pmd}
	M^{(2)}_D(\lambda)=I+\iint_\mathbb{C}\frac{M^{(2)}_D(\varsigma)\tilde{\mathcal{R}}(\varsigma)}{\varsigma-\lambda}\frac{\ddddd L(\varsigma)}{\pi},
\end{align}
where $L(\varsigma)$ is the  Lebesgue's measure on the complex plane.
Rewrite \eqref{pmd} as
\begin{align}\label{psf}
	(\identity-S)M^{(2)}_D=I,\quad Sf(\lambda)=\iint_{\mathbb{C}}\frac{f(\varsigma)\tilde{\mathcal{R}}(\varsigma)}{\varsigma-\lambda}\frac{\ddddd L(\varsigma)}{\pi}.
\end{align}
The following proposition shows that for sufficiently large $t$ the operator $S$ is small-norm, so that the resolvent operator $(\identity-S)^{-1}$ exists.
\begin{proposition}
	Define the integral operator $S$ as \eqref{psf}.
	If $r\in H^1$, then $S$ is bounded on $L^\infty(\mathbb{C})$, and as $t\to+\infty$, the bound of $S$ on $L^\infty(\mathbb{C})$ is dominated by $t^{-\frac{1}{6}}$, that is,
	\begin{align}\label{s}
		\norm{S}_{L^\infty \to L^\infty}\lesssim t^{-\frac{1}{6}}.
	\end{align}
\end{proposition}
\begin{proof}
	For any $f \in L^\infty$,
	\begin{equation*}
		\abs{Sf(\lambda)} \lesssim\norm{f}_{L^\infty(\mathbb{C})}\iint_{\mathbb{C}}\frac{|\tilde{R}(\varsigma)|}{|\varsigma-\lambda|}\frac{\ddddd L(\varsigma)}{\pi}.
	\end{equation*}
	Notice that $M^{(2)}_{RH}$, $(M^{(2)}_{RH})^{-1}$, $T$ and $T^{-1}$ are all bounded on $\Omega=\Omega_1 \cup \Omega_2$ as well as $\tilde{R}(\varsigma)$ is supported on $\varsigma \in \overline{\Omega}$. In what follows, we give a detailed estimate of the integral over $\Omega_1$.
	For the region $\Omega_1$, we write $\varsigma\in\Omega_1$ in the polar coordinates: $\varsigma=\rho e^{i\theta}$, $(\rho,\theta)\in(1,\rho_0)\times(\pi-\theta_\rho,\theta_\rho)$, where $\rho_0=\abs{1+\epsilon_0e^{-\ii\pi/4}}$, and $\rho e^{\ii\theta_\rho}$, $\rho e^{\ii(\pi-\theta_\rho)}$ are on the upper boundaries of $\Omega_1$: $S_1(1+e^{-\ii\pi/4}(0,\epsilon_0))$, $S_2(1+e^{\ii\pi/4}(0,\epsilon_0))$, respectively.
	For $\xi \in \mathcal{R}_-$, we have $S_1,\; S_2 \to -\ii$.
	Taking Taylor's expansion of \eqref{e2.18} at $\lambda= -\ii$, we see that
		\begin{align}\label{rephi}
	\re\phi(\rho e^{\ii\theta_\rho})=
	2t(\xi-1)(\rho-1)-t(\xi-1)(\rho-1)^2 +\frac{1}{3} \left(2\xi-3 \right)t (\rho-1)^3  + \oo(t(\rho-1)^4), \quad \rho e^{\ii\theta_\rho}\to -\ii.
	\end{align}
%	\begin{align}\label{rephi}
%		\re\phi(\rho e^{\ii\theta_\rho})=
%		2t(\xi+1)(\rho-1)-t(\xi+1)(\rho-1)^2 +\frac{2}{3} t\xi (\rho-1)^3  + \oo(t(\rho-1)^4), \quad \rho e^{\ii\theta_\rho}\to \ii.
%	\end{align}
	Utilizing  \eqref{pr1} and the boundedness of $T$ on $\Omega_1$, we estimate the integral on $\Omega_1$ for $\lambda\ne0$,
	\begin{equation}\label{omega1}
		\begin{split}
			&\iint_{\Omega_1}\abs{\frac{\bar\partial R_1(\varsigma)e^{-\phi}}{\varsigma-\lambda}}\frac{\ddddd L(\varsigma)}{\pi} =\int_{1}^{\rho_0}\int_{\pi-\theta_\rho}^{\theta_\rho}\abs{\frac{\rho^{-1}\partial_\theta(r(e^{\ii\theta}))e^{-\phi(\rho e^{\ii\theta})}}{2\pi(\rho e^{\ii\theta}-\lambda)}}\rho\ddddd\theta\ddddd\rho\\
			&\quad \le\int_{1}^{\rho_0}\int_{-\frac{\pi}{2}}^{\frac{3\pi}{2}}\frac{\abs{\partial_\theta (r(e^{\ii\theta}))}e^{-\re\phi(\rho e^{\ii\theta_\rho})}}{2\pi\abs{\rho e^{\ii\theta}-\lambda}}\ddddd\theta\ddddd\rho\\
			&\quad \le\int_{1}^{\rho_0}\ddddd\rho\frac{e^{-\re\phi(\rho e^{\ii\theta_\rho})}}{2\pi}\norm{r}_{H^1}\left(\int_{-\pi}^{\pi}\abs{\rho e^{\ii\theta}-\lambda}^{-2}\ddddd\theta\right)^\frac{1}{2}\\
			&\quad\lesssim\int_{1}^{\rho_0}\ddddd\rho\frac{e^{-\re\phi(\rho e^{\ii\theta_\rho})}}{2\pi\abs{\rho-\abs{\lambda}}^{1/2}}.
		\end{split}
	\end{equation}
	By \eqref{rephi} and \eqref{omega1}, since we have assumed $\epsilon_0$ small enough, it is readily seen that
	\begin{align}
		\iint_{\Omega_1}\abs{\frac{\bar\partial R_1(\varsigma)e^{-\phi}(\varsigma)}{\varsigma-\lambda}}\frac{\ddddd L(\varsigma)}{\pi} \lesssim \int_{0}^{\rho_0}\ddddd\rho\frac{e^{-\frac{1}{3}t(\rho-1)^3  }}{2\pi\abs{\rho-\abs{\lambda}}^{1/2}}\lesssim t^{-\frac{1}{6}}.
	\end{align}
	For $\Omega_2$, the similar estimate is obtained. Thus, we	prove the result \eqref{s}.
	
\end{proof}

Now, we focus on the estimate of $M^{(2)}_D(0)$
\begin{equation}\label{em2d}
	\begin{split}
		M^{(2)}_D(0)=I+\iint_{\mathbb{C}}M^{(2)}_D(\varsigma)\tilde{\mathcal{R}}(\varsigma)\varsigma^{-1}\frac{\ddddd L(\varsigma)}{\pi}.
	\end{split}
\end{equation}
Again taking $s=\rho e^{\ii\theta}\in\Omega_1$,
\begin{equation}\label{io1}
	\begin{split}
		&\abs{\iint_{\Omega_1}M^{(2)}_D(\varsigma)\tilde{\mathcal{R}}(\varsigma)\varsigma^{-1}\frac{\ddddd L(\varsigma)}{\pi}}\lesssim \iint_{\Omega_1}\abs{\bar\partial R_1(\varsigma)e^{-\phi(\varsigma)}\varsigma^{-1}}\frac{\ddddd L(\varsigma)}{\pi}\\
		&\quad=\int_1^{\rho_0}\frac{e^{-\re\phi(\rho e^{\ii\theta_\rho})}}{2\pi\rho}\ddddd\rho\int_{\pi-\theta_\rho}^{\theta_\rho}\abs{\partial_\theta(r(e^{\ii\theta}))}e^{-\re\phi(\rho e^{\ii\theta})+\re\phi(\rho e^{\ii\theta_\rho})}\ddddd\theta \\
		& \quad \lesssim \int_1^{\rho_0}\frac{ e^{\re(\rho e^{-\ii\theta_\rho})}}{2\pi\rho}\ddddd\rho
	\left(	\left(\int_{-\frac{\pi}{2}}^{\theta_\rho}e^{	-t(1 -\rho^{-2})(\theta-\theta_\rho)^2}\ddddd\theta \right)^{\frac{1}{2}} + \left(\int_{\pi-\theta_\rho}^{\frac{\pi}{2}}e^{	-t(1 -\rho^{-2})(\theta-\pi +\theta_\rho)^2}\ddddd\theta \right)^{\frac{1}{2}} \right)
	\end{split}
\end{equation}
\begin{comment}
By \eqref{e2.18},  it follows that for $\theta \in (\frac{\pi}{2}, \theta_\rho)$,
\begin{align}
	\re\phi(\rho e^{\ii\theta}) - 	\re\phi(\rho e^{\ii\theta_\rho})  =	t(1 -\rho^{-2}) \rho(\sin \theta-\sin \theta_\rho).
\end{align}
Moreover,  it is readily seen that
\begin{align}
	\rho (\sin \theta-\sin \theta_\rho)<
	-\frac{1}{2} \rho \sin \theta_\rho (\theta-\theta_\rho)^2 \le \frac{\xi}{2} (\theta-\theta_\rho)^2 \quad \theta \in  (\frac{\pi}{2}, \theta_\rho),
\end{align}
from which give us that
\begin{align}\label{pe2}
	\re\phi(\rho e^{\ii\theta}) - 	\re\phi(\rho e^{\ii\theta_\rho})< \frac{\xi}{2} 	t(1 -\rho^{-2})(\theta-\theta_\rho)^2.
\end{align}
Substituting the estimate \eqref{pe2} into \eqref{io1}, we obtain
\begin{align*}
	\int_1^{\rho_0}\frac{ e^{\re(\rho e^{\ii\theta_\rho})}}{2\pi\rho}\ddddd\rho
	\left(\int_{-\frac{\pi}{2}}^{\theta_\rho}e^{	t\xi(1 -\rho^{-2})(\theta-\theta_\rho)^2}\ddddd\theta \right)^{\frac{1}{2}}
	\lesssim t^{-\frac{1}{4}}\int_1^{\rho_0} \rho^{-\frac{1}{2}} (\rho^2-1)^{\frac{1}{4}}\frac{e^{-\frac{1}{3}t (\rho-1)^3} }{2 \pi}
	\ddddd\rho\lesssim t^{-\frac{1}{2}}.
\end{align*}
For $\theta \in \left(-\pi-\theta_\rho, -\frac{\pi}{2}\right)$, the estimate is also of the order $\oo(t^{-\frac{1}{2}})$.  It is thereby inferred that
\begin{align}\label{io11}
	\abs{\iint_{\Omega_1}M^{(2)}_D(\varsigma)\tilde{\mathcal{R}}(\varsigma)\varsigma^{-1}\frac{\ddddd L(\varsigma)}{\pi}}\lesssim  t^{-\frac{1}{2}}.
\end{align}
\end{comment}
Through direct calculations, we have
\begin{align}\label{io2}
	\abs{\iint_{\Omega_1}M^{(2)}_D(\varsigma)\tilde{\mathcal{R}}(\varsigma)\varsigma^{-1}\frac{\ddddd L(\varsigma)}{\pi}}\lesssim t^{-\frac{1}{2}}.
\end{align}
The similar result also holds for $\Omega_2$. Then,
substituting \eqref{io2} for $\Omega_1$ and $\Omega_2$ into \eqref{em2d}, it is readily seen that as $t\to+\infty$,
\begin{align}\label{em2de}
	M^{(2)}_D(0)=I+\oo(t^{-\frac{1}{2}}).
\end{align}

\subsection{Asymptotic formula}

Tracking back the transformations \eqref{t1}, \eqref{m2m1}, \eqref{m2mdmrh},  and \eqref{em2de}, we conclude that, in the neighborhood of $\lambda=0$,
\begin{equation*}
	M(\lambda) = 	M^{(2)}_D(\lambda) E(\lambda) M^{(\mathcal{Z})}(\lambda) T(\lambda)^{-\sigma_3}.
\end{equation*}
%Taking $\lambda=0$ in the above equation, we then obtain from \eqref{T}, \eqref{pm2rh}, and \eqref{e0} that
%\begin{align}
%	M(0) = 	M^{(\mathcal{Z})}(0) T(0)^{-\sigma_3}-2  t^{-\frac{1}{3}} M^{(\mathcal{Z})}(-\ii)M_1^{(U)}M^{(\mathcal{Z})}(-\ii)^{-1}M^{(\mathcal{Z})}(0) T(0)^{-\sigma_3} + \oo(t^{-\frac{1}{2}}),
%\end{align}
%\hl{where}%这里是不是写成T_0?
%\begin{align*}
%	T_0 = \prod_{\lambda_{l}\in\mathcal{Z}_\xi^-}\left| \lambda_{l} \right|^{2\alpha_{l}}.
%\end{align*}
%Together with \eqref{e2.46}, the asymptotic formula  \eqref{qn} for $\xi \in $ I is established. % Moreover, by employing a similar approach, we can derive the asymptotic behavior for $\xi \in \mathcal{R}_+$.
Then, it follows by the reconstruction formula \eqref{e2.46}  that
\begin{align}
q_n(t)=T_0\left(M^{(2)}_DEM^{(\mathcal{Z})}\right)_{1,2}(0,n+1,t),
\end{align}
where
\begin{align*}
T_0=
\prod_{\lambda_l\in\mathcal{Z^-_\xi}}\abs{\lambda_l}^{2\alpha_l}.
\end{align*}
The asymptotic formula  \eqref{qn} in the 1st transition zone is established.  Moreover, by employing a similar approach, we can derive the asymptotic behavior in the 2nd transition zone,
 where
\begin{align}
T_0=e^{-\int_{0}^{2\pi}\ln(1+\abs{r(e^{\ii\theta})}^2)\frac{\ddddd\theta}{2\pi}}\prod_{\lambda_l\in\mathcal{Z^-_\xi}}\abs{\lambda_l}^{2\alpha_l}.
\end{align}

\appendix

\section{Parabolic cylinder model}

Here, describe the parabolic cylinder model that is first introduced by \cite{its1981asymptotics} and further by \cite{deift1993steepest}.
And it has been frequently applied to the literature of long-time asymptotics \cite{dieng2008long,Borghese2018long, grunert2009long,kruger2009long,jenkins2018soliton}.
Set $\Sigma^{(PC)}=\bigcup_{k=1}^4(e^{\frac{\pi\ii}{4}(2k-1)}\mathbb{R}^+)$ oriented outwards from the origin.
Consider the parabolic-cylinder RH problem below.
\begin{rhp}\label{rhpc}
	\
	\begin{itemize}
		\item $M^{(PC)}=M^{(PC)}(\zeta,\tau)$ is analytic in $\mathbb{C}\setminus\Sigma^{(PC)}$.
		\item As $\zeta\to\infty$, $M^{(PC)}= I+\oo(\zeta^{-1})$.
		\item For  $\zeta \in \Sigma^{(PC)}$,
		\begin{align}
			&M^{(PC)}_+=M^{(PC)}_-V^{(PC)},\quad
			V^{(PC)}=V^{(PC)}(\zeta,\tau)=
			\begin{cases}
				\left(\begin{matrix}
					1&0\\\tau\zeta^{-2\ii\nu}e^{\frac{\ii\zeta^2}{2}}&1
				\end{matrix}\right)&\zeta\in e^{\frac{\ii\pi}{4}}\mathbb{R}^+,\\
				\left(\begin{matrix}
					1&\frac{\overline{\tau}}{1+\abs{\tau}^2}\zeta^{2\ii\nu}e^{-\frac{\ii\zeta^2}{2}}\\0&1
				\end{matrix}\right)&\zeta\in e^{\frac{3\ii\pi}{4}}\mathbb{R}^+,\\
				\left(\begin{matrix}
					1&0\\\frac{\tau}{1+\abs{\tau}^2}\zeta^{-2\ii\nu}e^{\frac{\ii\zeta^2}{2}}&1
				\end{matrix}\right)&\zeta\in e^{-\frac{3\ii\pi}{4}}\mathbb{R}^+,\\
				\left(\begin{matrix}
					1&\overline{\tau}\zeta^{2\ii\nu}e^{-\frac{\ii\zeta^2}{2}}\\0&1
				\end{matrix}\right)&\zeta\in e^{-\frac{\ii\pi}{4}}\mathbb{R}^+.
			\end{cases}
		\end{align}
		
	\end{itemize}
\end{rhp}
It is well acknowledged that $M^{(PC)}$ admits the asymptotic property at $\zeta\to\infty$ \cite{Borghese2018long,deift1993steepest},
\begin{align}\label{e4.27}
	M^{(PC)}(\zeta,\tau)= I+\frac{1}{\zeta}\left(\begin{matrix}
		0&-\ii\gamma_1(\tau)\\\ii\gamma_2(\tau)&0
	\end{matrix}\right)+\oo(\zeta^{-2}),
\end{align}
where
\begin{equation}
	\begin{split}
		&\gamma_1=\gamma_1(\tau)=\frac{\sqrt{2\pi}e^{\ii\pi/4-\pi\nu(\tau)/2}}{\tau\Gamma(-\ii\nu(\tau))},\quad \gamma_2=\gamma_2(\tau)=\frac{\sqrt{2\pi}e^{-\ii\pi/4-\pi\nu(\tau)/2}}{\bar\tau\Gamma(\ii\nu(\tau))},\quad
		\nu=\nu(\tau)=-\frac{\ln(1+\abs{\tau}^2)}{2\pi}.
	\end{split}
\end{equation}

\section{Painlev\'{e} \uppercase\expandafter{\romannumeral2} RH model} \label{appx}
The   Painlev\'{e} \uppercase\expandafter{\romannumeral2} equation takes the form
\begin{equation}\label{p23}
	v_{ss} = 2 v^3 +s v, \quad s \in \mathbb{R},
\end{equation}
which is generally related to a $2 \times 2$ matrix-valued RH problem as follows \cite{ deift1993steepest, Charlier2020, Deiftzhoup2, FokasAblop2}.
Let $\Sigma^P$ denote the oriented contour consisting of six rays,  $\Sigma^P = \bigcup_{n=1}^6\left\{  \Sigma_n^P = e^{\ii(n-1)\frac{\pi}{3}} \mathbb{R}_+ \right\}$, with associated jump matrix $V^P: \Sigma^P \to M_2(\mathbb{C})$, $V^P|_{\Sigma_1^P} =\begin{pmatrix} 1 & 0 \\ p & 1 \end{pmatrix} $, etc., as depicted in  FIGURE \ref{Sixrays}, where $p$, $q$ and $r$ are complex numbers satisfying the relation
\begin{equation}
p+q+r+pqr =0.
\end{equation}

\begin{figure}
	\begin{center}
		\begin{tikzpicture}[scale=1.2]
			\node[shape=circle,fill=black,scale=0.15] at (0,0) {0};
			\node[below] at (0.3,0.25) {\footnotesize $0$};
			\draw [] (-2.5,0 )--(2.5,0);
			\draw [-latex] (0,0)--(1.25,0);
			\draw [-latex] (0,0 )--(-1.25,0);
			\draw [] (0,0 )--(2.5,2);
			\draw [-latex] (0,0)--(1.25,1);
			\draw [] (0,0 )--(2.5,-2);
			\draw [-latex] (0,0)--(1.25,-1);
			\draw [] (0,0 )--(-2.5,2);
			\draw [-latex] (0,0)--(-1.25,1);
			\draw [] (0,0 )--(-2.5,-2);
			\draw [-latex] (0,0)--(-1.25,-1);

			\node at (1.6,0.2) {\footnotesize$\Sigma^P_1$};
			\node at (1.1,-1.2) {\footnotesize$\Sigma^P_6 $};
			\node at (-1.2,1.2) {\footnotesize$\Sigma^P_3$};
			\node at (-1.6,-0.2) {\footnotesize$\Sigma^P_4$};
			\node at (1.2,1.2) {\footnotesize$\Sigma^P_2$};
			\node at ( -1.2,-1.2) {\footnotesize$\Sigma^P_5$};

					\node  at (3,0) {\footnotesize $\begin{pmatrix} 1 & p \\ 0 & 1 \end{pmatrix}$};
					\node  at (2.5,2.5) {\footnotesize $\begin{pmatrix} 1 & 0\\ q& 1 \end{pmatrix}$};
				\node  at (-2.5,2.5)  {\footnotesize $\begin{pmatrix} 1 & r\\ 0& 1 \end{pmatrix}$};
				\node  at (2.5,-2.5) {\footnotesize $\begin{pmatrix} 1 & 0 \\ r& 1 \end{pmatrix}$};
				\node  at (-2.5,-2.5) {\footnotesize $\begin{pmatrix} 1 & q\\ 0& 1 \end{pmatrix}$};
				\node  at (-3,0) {\footnotesize $\begin{pmatrix} 1 & 0 \\ p & 1 \end{pmatrix}$};
			
		\end{tikzpicture}
		\caption{ \footnotesize { The jump contour $\Sigma^P$.}}
		\label{Sixrays}
	\end{center}
\end{figure}

%The Painlev\'{e} \uppercase\expandafter{\romannumeral2} RH problem satisfies the following properties:
\begin{rhp}\label{1modp2}
	%	Find   $M^{P}(k)=M^{\mathrm{P}}(k,s)$ with properties
	\
	\begin{itemize}
		\item $M^{P}=M^{P}(k;s)$ is analytical in $\mathbb{C}\setminus \Sigma^{P}$.
		\item $M^{P}$ satisfies the jump condition
		\begin{equation*}
			M^{P}_+(k)=M^{P}_-(k)e^{-\ii (\frac{4k^3}{3}+sk)\sigma_3} V^P(k) e^{\ii (\frac{4k^3}{3}+sk)\sigma_3}, \quad k \in \Sigma^P.
		\end{equation*}

		\item As $z\to \infty$ in $\mathbb{C}\setminus \Sigma^{P}$, we have $M^{P}(k)=I+\mathcal{O}(k^{-1})$.

	\end{itemize}
\end{rhp}
%\noindent has a unique solution $M^P(k)$ for each $s \in \mathbb{C} \setminus  \mathcal{S}_\mathcal{C}$. For each $n$, the restriction of $M^P(\hat{k})$ to $\arg \hat{k} \in \left(\frac{\pi(2n-3)}{6}, \frac{\pi(2n-1)}{6}\right)$ admits an analytic continuation to $\left( \mathbb{C} \setminus  \mathcal{S}_\mathcal{C} \right) \times \mathbb{C}$ and  there are smooth function $\{M_j^P(s)\}_{j=1}^\infty$ of $s \in \mathbb{C} \setminus  \mathcal{S}_\mathcal{C}$ such that, for each integer $N \ge 0$,
Then,
\begin{align}\label{up2}
v(s)=2	\ii \left(M_1^\mathrm{P}(s)\right)_{12} =-2 \ii  \left(M_1^\mathrm{P}(s)\right)_{21},
\end{align}
solves the Painlev\'{e} \uppercase\expandafter{\romannumeral2} equation \eqref{p23}, where
\begin{equation}\label{stanp}
M^P(k) = I +\frac{M_1^P(s)}{k} + \mathcal{O} \left(\frac{1}{k^2}\right)
\end{equation}
as $k\to \infty$.
Furthermore, for any $q \in \mathbb{C}$
\begin{equation}
q, \quad p=\bar{q}, \quad r = -\frac{q+\bar{q}}{1+|q|^2} \in \mathbb{R},
\end{equation}
formula \eqref{up2} leads to a global, pure imaginary solution of the Painlev\'{e} \uppercase\expandafter{\romannumeral2} equation. In this case, the leading coefficient $M_1^P(s)$ is given by
\begin{align}\label{posee}
M_1^P(s) = \frac{\ii}{2} \begin{pmatrix} -\int_{s}^\infty v(\zeta)^2\ddddd \zeta &- v(s)  \\ v(s) & \int_{s}^\infty v(\zeta)^2\ddddd \zeta  \end{pmatrix}.
\end{align}

\begin{comment}
%The above RH problem admits a unique solution. Then,  there exists a smooth function $\{M_1^P(s)\}_{j=1}^\infty$ such that, for each integer $N \ge 0$, we have
\begin{equation}\label{stanp}
	M^P(k) = I + \sum_{j=1}^N\frac{M_j^P(s)}{k^j} + \mathcal{O}(k^{-N-1}),\quad k\to \infty.
\end{equation}
%uniformly for $s$ in compact subsets of  $\mathbb{C} \setminus  \mathcal{S}_\mathcal{C}$ and for $\arg \hat{k} \in [0, 2\pi]$.
Moreover,
\begin{align}\label{up2}
	v(s)=2	\left(M_1^\mathrm{P}(s)\right)_{12} =2  \left(M_1^\mathrm{P}(s)\right)_{21}
\end{align}
solves the Painlev\'{e} \uppercase\expandafter{\romannumeral2} equation \eqref{p23}.
Further, \hl{if} $\mathcal{C} = (c_1,0,-c_1)$ where $c_1 \in \ii \mathbb{R}$ with $|c_1| <1$, then the leading coefficient $M_1^P(s)$ is given by%符号和前的cauchy算子符号重复
\begin{align}\label{posee}
	M_1^P(s) = \frac{1}{2} \begin{pmatrix} -\ii\int_{s}^\infty v(\zeta)^2\ddddd \zeta & v(s)  \\ v(s) & \ii\int_{s}^\infty v(\zeta)^2\ddddd \zeta  \end{pmatrix},
\end{align}
and for each $c_1 > 0$,
\begin{align}\label{mPbounded}
	\sup_{k \in \mathbb{C}\setminus \Sigma^P} \sup_{s \geq -c_1} |M^P(k)|  < \infty.
\end{align}
The solution of the Painlev\'{e} \uppercase\expandafter{\romannumeral2} equation \eqref{p23} is specified by
\begin{equation}
	v(s) \sim -\I c_1 \mathrm{Ai}(s) \sim - \frac{\I c_1}{2\sqrt{\pi}} s^{-\frac{1}{4}}  e^{-\frac{2}{3}s^{\frac{3}{2}}},\ s \to +\infty,
\end{equation}
where $\mathrm{Ai}(s)$ denotes the classical Airy function.
\end{comment}

\bibliography{wileyNJD-AMS}
\bibliographystyle{siam}

\end{document}